\documentclass[12pt]{iopart}

%
\usepackage{amssymb,amsfonts,amsthm,iopams}
\def\longrightarrow{
\relbar\joinrel\joinrel\relbar\joinrel\joinrel\relbar\joinrel\joinrel\relbar\joinrel\joinrel\relbar\joinrel\joinrel\relbar\joinrel\joinrel\rightarrow}
\newcommand{\xrightarrow}[1]{\stackrel{#1}{\longrightarrow}}
\newtheorem{theorem}{Theorem}[]
\newtheorem{assumption} [theorem]{Assumption}
\newtheorem{corollary}      [theorem]{Corollary}
\newtheorem{lemma}         [theorem]{Lemma}
\newtheorem*{result*}{Main results}
%
%
\usepackage{graphicx, color}
%
%
\newcommand{\texorpdfstring}[2]{#1}   
\newcommand{\url}[1]{#1} 
\usepackage{hyperref}%
\definecolor{gray}{rgb}{0.2,0.2,.2}
\hypersetup{%
  unicode=true,          
  colorlinks=true,       
  linkcolor=gray,          
  citecolor=gray      
}
%
%
%
%
%


\newcommand{\ul}[1]{\underline{#1}}

\newcommand{\ol}[1]{{\overline{#1}}}

\newcommand{\bigpar}{\par\quad\newline\noindent}
\newcommand{\dint}[1]{\,\mathrm{d}#1}



\newcommand{\fspace}[1]{{\mathsf{#1}}}
\newcommand{\fspaceL}{\fspace{L}}


\newcommand{\Rset}{{\mathbb{R}}}
\newcommand{\Zset}{{\mathbb{Z}}}

\newcommand{\Nset}{{\mathbb{N}}}


%
\newcommand{\cointerval}[2]{[#1,\,#2)}%
\newcommand{\oointerval}[2]{(#1,\,#2)}%
\newcommand{\ccinterval}[2]{[#1,\,#2]}%


\newcommand{\DO}[1]{{O\at{#1}}}
\newcommand{\Do}[1]{{o\at{#1}}}


\newcommand{\skp}[2]{{\left\langle{#1},\,{#2}\right\rangle}}

\newcommand{\bskp}[2]{{\big\langle{#1},\,{#2}\big\rangle}}

\newcommand{\pair}[2]{{\left({#1},\,{#2}\right)}}

\newcommand{\at}[1]{{\left({#1}\right)}}

\newcommand{\bat}[1]{{\big(#1\big)}}
\newcommand{\Bat}[1]{{\Big(#1\Big)}}

\newcommand{\triple}[3]{{\left({#1},\,{#2},\,{#3}\right)}}

\newcommand{\norm}[1]{\|{#1}\|}
\newcommand{\bnorm}[1]{\big\|{#1}\big\|}

\newcommand{\abs}[1]{\left|{#1}\right|}
\newcommand{\babs}[1]{\big|{#1}\big|}


\newcommand{\ga}{{\gamma}}

\newcommand{\eps}{{\varepsilon}}

\newcommand{\ka}{{\kappa}}
\newcommand{\la}{{\lambda}}
\newcommand{\si}{{\sigma}}


\newcommand{\calC}{\mathcal{C}}

\newcommand{\calP}{\mathcal{P}}

\newcommand{\calT}{\mathcal{T}}


%
%
%

\begin{document}

\title[Asymptotic formulas for solitary waves]{Asymptotic formulas for solitary waves
in the high-energy limit of  FPU-type chains}

\author{Michael Herrmann}

\address{Universit\"at M\"unster, Institut f\"ur Numerische und Angewandte Mathematik,
      \\Einsteinstra{\ss}e 62, D-48149 M\"unster, Germany}
\ead{michael.herrmann@uni-muenster.de}

\author{Karsten Matthies}
\address{University of Bath, Department of Mathematical Sciences,\\
     BA2 7AY Bath, United Kingdom}
\ead{k.matthies@maths.bath.ac.uk}
\vspace{10pt}
\begin{indented}
\item[]\today
\end{indented}
\begin{abstract}
It is well established that the solitary waves of FPU-type chains converge in the high-energy limit to traveling waves of the hard-sphere model. In this paper we establish improved asymptotic expressions for the wave profiles as well as an explicit formula for the wave speed. The key step in our approach is the derivation of an asymptotic ODE for the appropriately rescaled strain profile.
\end{abstract}
%
\ams{37K60, 37K40, 74H10  }
%
\vspace{2pc}
\noindent{\it Keywords}: asymptotic analysis, lattice waves, high-energy-limit of FPU-type chains
%
%
%
%
%
%
\section{Introduction}
%
Traveling waves in nonlinear Hamiltonian lattice systems are ubiquitous in many branches of sciences and their mathematical analysis has attracted a lot of interest over the last two decades. In the simplest case of a spatially one-dimensional lattice with nearest-neighbor interactions -- often called Fermi-Pasta-Ulam or FPU-type chain -- the analytical problem consists of finding a positive wave-speed parameter $\si$ along with a distance profile $R$ and a velocity profile $V$ such that
\begin{equation}
\label{Eqn:TW.Diff}
\eqalign{
R^\prime\at{x}
&=
V\at{x+\mbox{$\frac12$}}-
V\at{x-\mbox{$\frac12$}}\,,
\cr
\si \, V^\prime\at{x}
&=\Phi^\prime\Bat{R\at{x+\mbox{$\frac12$}}}-
\Phi^\prime\Bat{R\at{x-\mbox{$\frac12$}}}
}
\end{equation}
is satisfied for all $x\in\Rset$. Here $\Phi$ is the nonlinear interaction potential and the position $u_j\at{t}$ of particle $j$ can be recovered by
\begin{eqnarray*}
u_j\at{t}=U\at{j-\sqrt{\si}\,t},\qquad  U\at{x}:=\int_{x_0}^x V\at{y}\dint{y}\,,
\end{eqnarray*}
which implies the identities
\begin{equation*}
\dot{u}_j\at{t}=\sqrt{\si}\,V\at{j-\sqrt{\si}\,t}\quad{\rm and}\quad u_{j+1}\at{t}-u_j\at{t}=
R\at{j+\mbox{$\frac12$}-\sqrt{\sigma}\,t}
\end{equation*}
for the atomic velocities and distances, respectively. In particular, $u$ satisfies Newton's law of motion
\begin{eqnarray}
\label{Eqn:FPU}
\ddot u_j\at{t}= \Phi^\prime\bat{u_{j+1}\at{t}-u_{j}\at{t}}-
\Phi^\prime\bat{u_{j}\at{t}-u_{j-1}\at{t}}\,, \qquad j \in \Zset\,.
\end{eqnarray}
\par
The existence of several types of traveling wave solutions (with periodic, solitary, front-like, or even more complex profile functions) can be established in different frameworks; see, for instance, \cite{FW96,FV99} for constrained optimization problems, \cite{Pan05} for critical point techniques, \cite{IJ05} for spatial dynamics, and \cite{SZ09,TV14} for almost explicit solutions. However, very little is known about the uniqueness of the solutions to the advance-delay differential equation \eref{Eqn:TW.Diff} or their dynamical stability within \eref{Eqn:FPU}. The only nonlinear exceptions are the Toda chain -- which is completely integrable, see \cite{Tes01} and references therein -- and the Korteweg-de Vries  (KdV) limit of solitary waves in chains with so called hardening. The latter has been investigated by Friesecke and Pego in a series of four seminal papers starting with \cite{FP99}.
In this limit, solitary waves have small amplitudes, carry low energy, and are spread over a huge number of lattice sites. The discrete difference operators in \eref{Eqn:TW.Diff} can therefore be approximated by continuous differential operators and the asymptotic properties are governed by the KdV equation, which is completely integrable and well understood.
\par
Another interesting asymptotic regime concerns solitary waves with high energy in
chains with rapidly increasing potential. Here the profile functions localize completely
since $V$ converges -- maybe after some affine rescaling -- to the indicator function of an interval, see \cite{FM02,Tre04} or \cite{Her10} for potentials $\Phi$ that posses a singularity or grow super-polynomially, respectively. The physical interpretation of the high-energy limit is that the particles interact asymptotically as  in the hard-sphere limit, that means by elastic collisions only.
\par
The high-energy limit is another natural candidate for tackling the analytical problems
concerning the uniqueness and the stability of traveling wave. In this context we are especially interested in the spectral properties of the linearized traveling waves
equation -- see the discussion in \sref{sect:eigenproblem} -- but the convergence results from the aforementioned papers do not give any control in this direction. They are too weak and provide neither an explicit leading order formula for $\si$ nor the next-to-leading order corrections to the asymptotic profile functions. In this paper we derive such formulas and present a refined asymptotic analysis of the high-energy limit for potentials with sufficiently strong singularity.
%
%
\subsection{The high-energy limit}
%
%
In order to keep the presentation as simple as possible, we restrict our considerations to the example potential
\begin{eqnarray}
\label{Eqn:Pot}
\Phi\at{r} = \frac{1}{m\at{m+1}}\at{\frac{1}{\at{1-r}^m}-m\,r-1}\qquad {\rm with}\quad m\in\oointerval{1}{\infty}\,,
\end{eqnarray}
which is convex and well-defined for $r<1$,
satisfies
\begin{equation*}
\Phi\at{0}=\Phi^\prime\at{0}=0\,,\qquad \Phi^{\prime\prime}\at{0}=1\,,
\end{equation*}
and becomes singular as $r\nearrow1$. The condition $m>1$ is quite essential and shows up several times in our proofs. The other details are less important and
our asymptotic approach can hence be generalized to the case
\begin{equation*}
\eqalign{ %
\mbox{$\Phi$ is convex and smooth on some interval $\ccinterval{a}{b}$ with $\Phi^\prime\at{a}=0$ } \\
\mbox{such that the limit
$\lim_{x\nearrow{b}}\Phi\at{x}\at{b-x}^m$ does exist}\,.
} %
\end{equation*}
This class also includes -- after a reflection with respect to the distance variable
-- all Lennard-Jones-type potential, which blow up on the left
of the global minimum.
\par
To simplify the exposition further, we merely postulate the existence of a family of solitary waves with certain properties but sketch in \sref{sect:justification} how our assumption can be justified rigorously. Specifically, we rely on the following standing assumption, where \emph{unimodal profile} means increasing and decreasing for negative and positive $x$, respectively.
\begin{assumption}[family of high-energy waves]
\label{Ass:Waves}
${\triple{V_\delta}{R_\delta}{\si_\delta}}_{0<\delta<1}$ is a family of solitary waves with the following properties:
\begin{enumerate}
\item
$V_\delta$ and $R_\delta$ belong to $\fspaceL^2\at\Rset\cap\fspace{BC}^1\at\Rset$ and are nonnegative, even, and unimodal.
\item
$V_\delta$ is
normalized by $\norm{V_\delta}_2=1-\delta$ and
$R_\delta$ takes values in $\cointerval{0}{1}$.
\end{enumerate}
Moreover, the potential energy explodes in the sense of
$p_\delta:=\int_\Rset \Phi\bat{R_\delta\at{x}}\dint{x}\to+\infty$ as $\delta\to0$.
\end{assumption}
\begin{figure}[t!]
\centering{%
\includegraphics[width=0.9\textwidth]{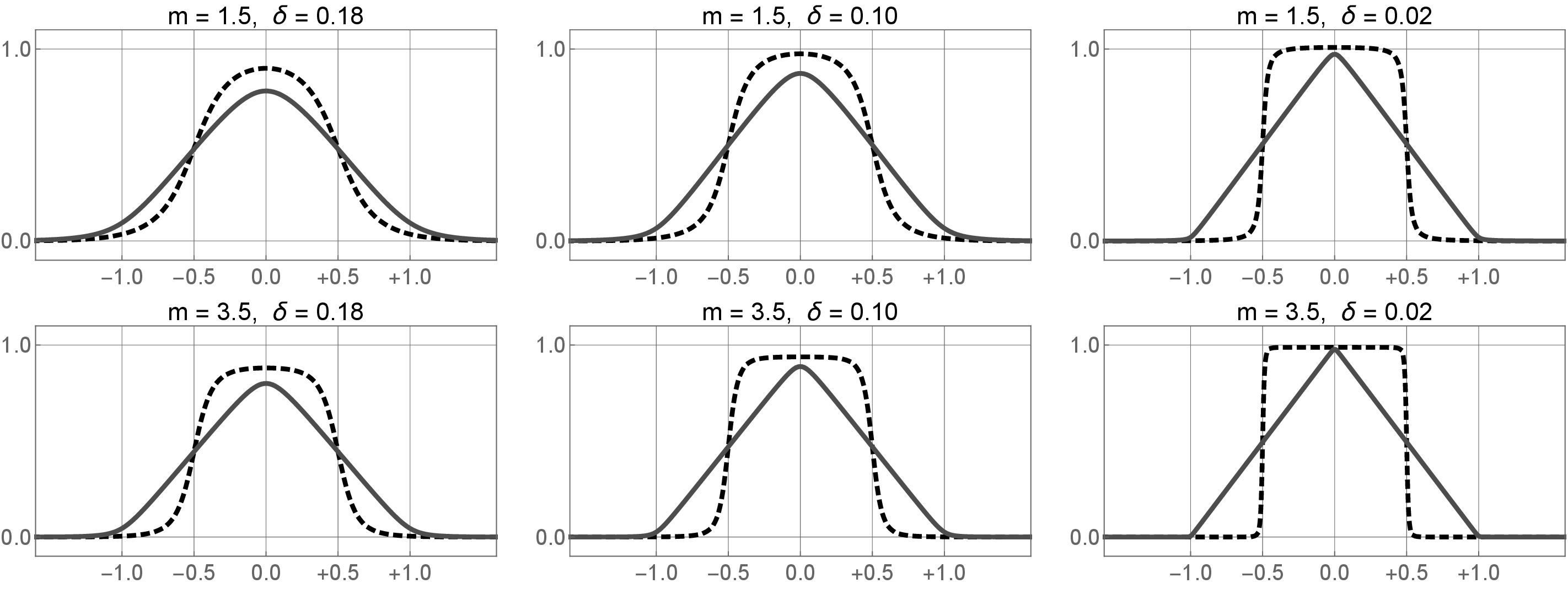}%
}%
\caption{Numerical examples of solitary waves for the potential \eref{Eqn:Pot} and as in Assumption \ref{Ass:Waves}: The graphs of $V_\delta$ (black, dashed) and $R_\delta$ (gray, solid) are plotted for $m=1.5$ (top row) and $m=2.5$ (bottom row). In the high-energy limit $\delta\to0$ (from left to right column), $V_\delta$ and $R_\delta$ approach
the indicator function $V_0$ and the tent map $R_0$, respectively. See \sref{sect:Pre:Waves} for details concerning the numerical scheme.}%
\label{Fig.NumWavesProf}%
\bigskip
\centering{%
\includegraphics[width=0.9\textwidth]{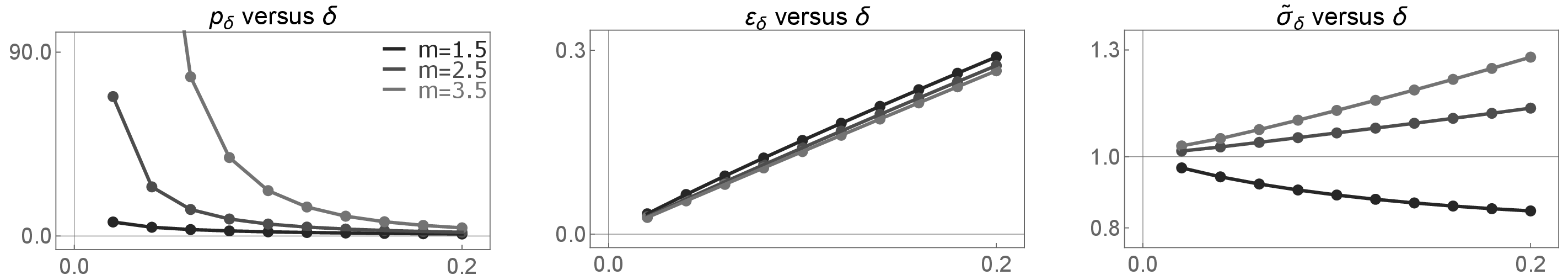}%
}%
\caption{Parameter plots for three different choices of $m$ and the simulations from Figure \ref{Fig.NumWavesProf}: $p_\delta$ represents the potential energy, $\eps_\delta$ is the amplitude parameter from \eref{Eqn:DefEpsMu}, and
$\tilde{\si}_\delta := \si_\delta/\at{\ol{\mu}^2 \eps_\delta^m}$ measures
the relative deviation of the speed parameter $\si$ with respect to the
asymptotic value from \eref{Cor:TS.ConvPrms.Eqn1}}%
\label{Fig.NumWavesData}%
\end{figure}
Beside of $\delta$ there exist two other small quantities, namely
\begin{equation}
\label{Eqn:DefEpsMu}
\eps_\delta := 1-R_\delta\at{0}\,,\qquad
\mu_\delta:=\sqrt{\si_\delta\,\eps_\delta^{m+2}},
\end{equation}
which feature prominently in the asymptotic analysis. The amplitude parameter $\eps_\delta$ quantifies the impact of the singularity and appears naturally
in many of the estimates derived below. The parameter $\mu_\delta$, which
looks rather artificial at a first glance, is also very important as
it determines the length scale for the leading order corrections to the asymptotic
profile functions $V_0$ and $R_0$.
\par
For the interaction potential \eref{Eqn:Pot} and the waves from Assumption \ref{Ass:Waves}, the existing results for the limit $\delta\to0$ are illustrated in Figures \ref{Fig.NumWavesProf}, \ref{Fig.NumWavesData} and
can be summarized as follows.
\begin{theorem}[localization theorem]
\label{Thm:Localization}
In the high-energy limit, we have
\begin{equation*}
\norm{V_\delta-V_0}_2+\norm{R_\delta-R_0}_\infty +\eps_\delta+\mu_\delta\quad\xrightarrow{\;\;\delta\to0\;\;}\quad0
\end{equation*}
with
\begin{equation*}
V_0\at{x}:=\chi\at{x}\,\qquad R_0\at{x}:=\max\big\{0,\,1-\abs{x}\big\}\,,
\end{equation*}
where $\chi$ denotes the indicator function of the interval $\ccinterval{-\frac12}{+\frac12}$.
\end{theorem}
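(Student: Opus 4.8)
The plan is to reduce all four convergences to the single strong-$\fspaceL^2$ statement $\norm{V_\delta-V_0}_2\to0$ and then to pin down the limit by a rigidity argument. First I would integrate the first line of \eref{Eqn:TW.Diff}, which yields the convolution identity $R_\delta=V_\delta\ast\chi$, that is $R_\delta\at{x}=\int_{x-1/2}^{x+1/2}V_\delta\at{y}\dint{y}$, the integration constant vanishing because $R_\delta\to0$ at $-\infty$. In particular $R_0=\chi\ast\chi$ is exactly the tent map, and Young's inequality gives $\norm{R_\delta-R_0}_\infty=\norm{\at{V_\delta-V_0}\ast\chi}_\infty\le\norm{V_\delta-V_0}_2\,\norm{\chi}_2=\norm{V_\delta-V_0}_2$. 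Hence the $R$-convergence comes for free once the $V$-convergence is known, and it remains to treat $\norm{V_\delta-V_0}_2$, $\eps_\delta$ and $\mu_\delta$.

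Next I would show $\eps_\delta\to0$, i.e. $R_\delta\at0\to1$; this is where the explosion of the potential energy is indispensable. Arguing by contradiction, if $R_\delta\at0\le1-\eps_0$ along a sequence $\delta\to0$, then $R_\delta$ stays uniformly away from the singularity, so $\Phi\bat{R_\delta}\le K\,R_\delta^2$ with $K=\sup_{\ccinterval{0}{1-\eps_0}}\Phi\at{r}/r^2<\infty$ (finite since $\Phi\at0=\Phi^\prime\at0=0$). Young's inequality then gives $p_\delta\le K\norm{R_\delta}_2^2\le K\norm{V_\delta}_2^2\,\norm{\chi}_1^2\le K$, contradicting $p_\delta\to+\infty$.

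The core is the $\fspaceL^2$-convergence, obtained by weak compactness plus rigidity. Since $\norm{V_\delta}_2=1-\delta\le1$, every sequence $\delta\to0$ has a subsequence with $V_\delta\rightharpoonup V_*$ weakly, and $\norm{V_*}_2\le1$ by weak lower semicontinuity. Testing against $\chi\in\fspaceL^2\at\Rset$ gives $\skp{V_*}{\chi}=\lim R_\delta\at0=1$ (using that $\skp{V_\delta}{\chi}=R_\delta\at0$ by the convolution identity). The Cauchy--Schwarz estimate $1=\skp{V_*}{\chi}\le\norm{V_*}_2\,\norm{\chi}_2\le1$ then holds with equality throughout, so $\norm{V_*}_2=1$ and $V_*=\la\chi$, and $\skp{V_*}{\chi}=\la=1$ forces $V_*=\chi=V_0$. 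As this limit is the same for every subsequence and moreover $\norm{V_\delta}_2\to1=\norm{V_0}_2$, weak convergence together with convergence of norms upgrades to strong convergence in the Hilbert space, whence $\norm{V_\delta-V_0}_2\to0$ for the whole family.

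Finally, for $\mu_\delta\to0$ I would integrate the second line of \eref{Eqn:TW.Diff} over $\cointerval{0}{\infty}$ and use $V_\delta\at\infty=0$ to obtain the momentum identity $\si_\delta\,V_\delta\at0=\int_{-1/2}^{1/2}\Phi^\prime\bat{R_\delta\at{u}}\dint{u}$, the tail integrals converging by the exponential decay of solitary-wave profiles. With the lower bound $V_\delta\at0\ge R_\delta\at0=1-\eps_\delta$ and the crude estimate $\Phi^\prime\bat{R_\delta}\le\frac{1}{m+1}\at{1-R_\delta}^{-\at{m+1}}\le\frac{1}{m+1}\eps_\delta^{-\at{m+1}}$ on an interval of length one, the identity gives $\si_\delta\le\frac{1}{\at{1-\eps_\delta}\at{m+1}}\,\eps_\delta^{-\at{m+1}}$, hence $\mu_\delta^2=\si_\delta\,\eps_\delta^{m+2}\le\eps_\delta\big/\bat{\at{m+1}\at{1-\eps_\delta}}\to0$. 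The genuine obstacle lies in the core step: pinning the limit to a box of precisely unit height and width hinges on matching the normalization $\norm{V_\delta}_2\to1$ with the peak constraint $R_\delta\at0\to1$ through the Cauchy--Schwarz rigidity, and the nontrivial input is $\eps_\delta\to0$, which is exactly what the singularity of $\Phi$ and the energy explosion deliver. I emphasize that the crude bound above yields only qualitative decay of $\mu_\delta$; the sharp leading-order behaviour of $\si_\delta$ requires the refined inner analysis near the peak that the remainder of the paper develops.
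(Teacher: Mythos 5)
Your proposal is correct and is essentially the paper's own argument (Lemma \ref{Lem:SimpleConv}): it rests on the same three ingredients, namely the identity $\nskp{V_\delta}{\chi}=R_\delta\at{0}$ together with $\norm{V_\delta}_2=1-\delta$ for the $\fspaceL^2$-convergence, the boundedness of the potential energy when $R_\delta$ stays away from the singularity (your contradiction is the contrapositive of the paper's bound $p_\delta\leq C\eps_\delta^{-m}$), and the relation $\si_\delta V_\delta\at{0}=\int_{-1/2}^{1/2}\Phi^\prime\bat{R_\delta\at{x}}\dint{x}$ for $\mu_\delta\to0$; your weak-compactness plus Cauchy--Schwarz rigidity step is just a softer form of the paper's one-line expansion $\norm{V_\delta-\chi}_2^2=\at{1-\delta}^2+1-2\at{1-\eps_\delta}\leq C\eps_\delta$, which in addition yields a rate. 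The only point to repair is cosmetic: to justify the momentum identity you should not appeal to exponential decay (not established at this stage) but to the pointwise decay $R_\delta\at{x}\to0$ as $\abs{x}\to\infty$, which follows from unimodality and $R_\delta\in\fspaceL^2\at\Rset$, or simply quote the integrated form \eref{Eqn:TW.Int} of the traveling wave equation.
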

We give a short proof in \sref{sect:preliminaries}. The corresponding results in \cite{FM02,Tre04} also provide lower and upper bounds but no explicit expansion for $\si_\delta$.
%
%
\subsection{Statement of the asymptotic result}
%
%
Our strategy for deriving a refined asymptotic analysis is to
blow up the profile functions near the critical spatial positions and to identify
equations that determine the asymptotic wave shape with respect to a rescaled space variable $\tilde{x}$. Specifically, we use the
\emph{transition scaling} in order to
describe the asymptotic velocity profile near $x=\pm\frac12$, while the
distance profile can be rescaled at both the \emph{tip position} $x=0$ and the \emph{foot positions} $x=\pm1$, see Figure \ref{Fig.CartoonScaling} for an illustration.
Our main findings can informally be summarized as follows.%
\begin{result*}
In the high-energy limit $\delta\to0$, all relevant information on $\triple{R_\delta}{V_\delta}{\si_\delta}$
can be obtained from the function $\tilde{S}_0$, which is defined by
the ODE initial value problem
\begin{equation}
\label{Eqn:LimitIVP}
\tilde{S}_0^{\prime\prime}\at{\tilde{x}}=
\frac{2}{m+1}\cdot\frac{1}{\at{1+\tilde{S}_0\at{\tilde{x}}}^{m+1}}\,,\qquad \tilde{S}_0\at{0}=\tilde{S}_0^\prime\at{0}=0
\end{equation}
and plotted in Figure \ref{Fig.CartoonLimit}. More precisely,
\begin{enumerate}
\item
the velocity profile $V_\delta$ converges under the \emph{transition scaling},
\item the distance profile $R_\delta$ converges under both the
\emph{tip scaling} and the \emph{foot scaling},
\item the rescaled parameters $\delta^m\si_\delta$, $\delta^{-1}\eps_\delta$, and $\delta^{-1}\mu_\delta$ converge,
\end{enumerate}
where the respective limit objects can be expressed in terms of $\tilde{S}_0$ and all error terms are at most of order $\DO{\delta^m}$.
\end{result*}
\begin{figure}[t!]
\centering{%
\includegraphics[width=0.6\textwidth]{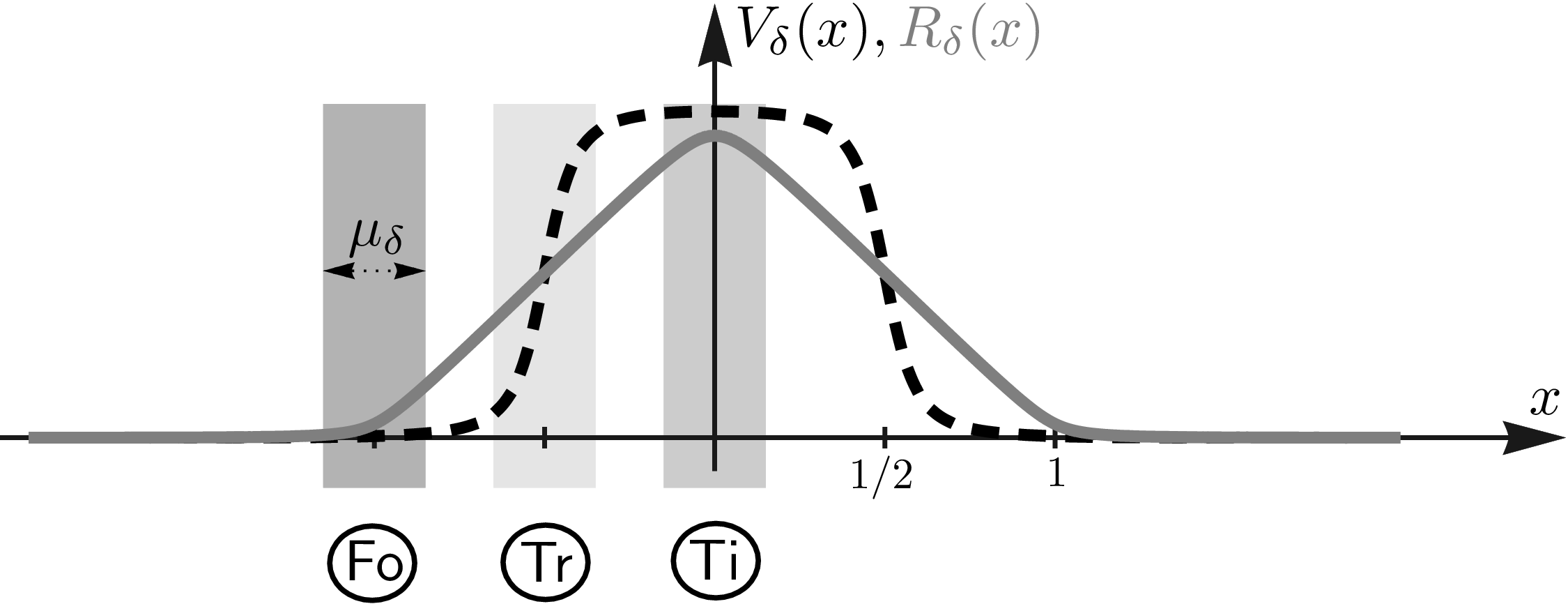}%
}%
\caption{Schematic representation of the different scalings: The transition scaling describes the jump-like behavior of $V_\delta$ near $x=\pm\mbox{$\frac12$}$ while the foot and the tip  scaling magnify the turns of $R_\delta$ at
$x\approx\pm1$ and $x\approx0$, respectively. The width parameter $\mu_\delta$ is introduced in \eref{Eqn:DefEpsMu} and satisfies $\mu_\delta\sim\eps_\delta$ according to Corollary \ref{Cor:ScalingLaws}.}%
\label{Fig.CartoonScaling}%
\bigskip
\centering{%
\includegraphics[width=0.85\textwidth]{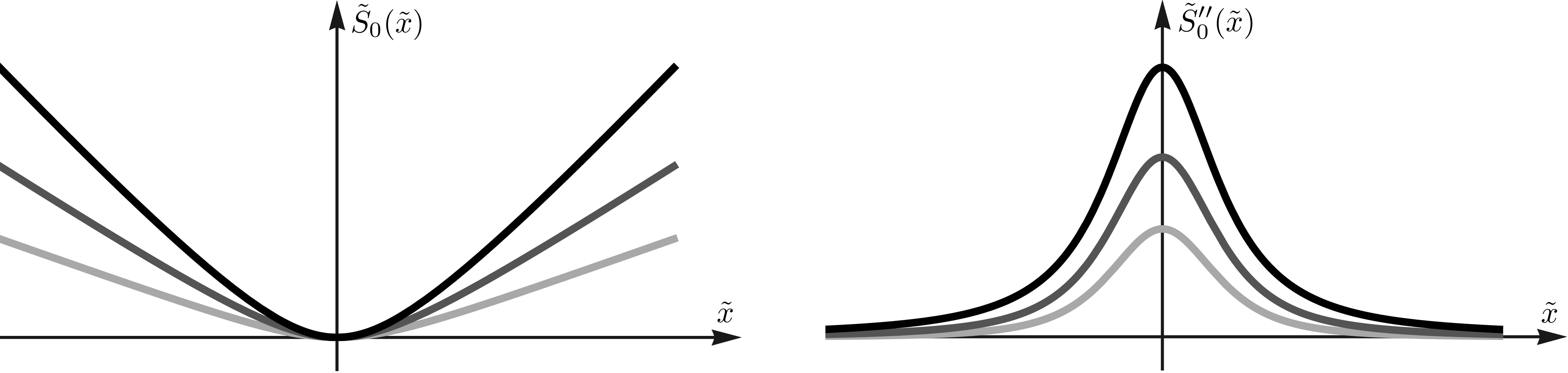}
}%
\caption{Graph of the function $\tilde{S}_0$ and its second derivative for $m=m_1$ (black) and $m=m_2$ (dark gray)  and $m=m_3$ (light gray) with $m_1<m_2<m_3$.}
\label{Fig.CartoonLimit}%
\end{figure}
The details concerning the convergence under the tip, the transition, and the foot scaling are presented in the Theorems \ref{Thm:TS.Asymptotics}, \ref{Thm:TrS.MR}, and \ref{Thm:FoS.MR}, respectively, while Corollary \ref{Cor:ScalingLaws} provides the explicit scaling laws for $\eps_\delta$, $\mu_\delta$, and $\si_\delta$. Moreover, the combination of all partial estimates gives rise to the global approximation results in
Theorem~\ref{Thm:GlobApp} and Corollary~\ref{Cor:GlobApp}.
\par
The above results provide an improved understanding of the high-energy limit of solitary waves. In particular, it seems that our asymptotic formulas can be used to control the spectrum of the linearized traveling wave equation, see the brief and preliminary discussion in \sref{sect:eigenproblem}.
\par
The paper is organized as follows. In the remainder of the introduction we prove
the localization theorem and discuss both the results from \cite{FM02,Tre04} and the justification of Assumption \ref{Ass:Waves} in greater detail.
Then \sref{sect:tipscaling} is devoted to the
tip scaling, which turns out to be most fundamental step in our asymptotic analysis. In particular, we identify the intrinsic scaling parameters in \sref{sect:tipscaling.1} and link afterwards
in \sref{sect:tipscaling.2} the rescaled distance profile to the initial value problem \eref{Eqn:LimitIVP}. In \sref{sect:further} we finally employ the results on the tip scaling and establish all other asymptotic formulas.
%
%
%
%
\subsection{Preliminaries}\label{sect:preliminaries}
%
%
%
In this section we prove Theorem \ref{Thm:Localization} since it provides the starting point for our asymptotic analysis in \sref{sect:tipscaling} and \sref{sect:further}. To this end it is convenient to reformulate the advance-delay-differential equation \eref{Eqn:TW.Diff} as
\begin{equation}
\label{Eqn:TW.Int}
 R = AV\,,\qquad  \si\, V =A \Phi^\prime\at{R}\,,
\end{equation}
where the operator $A$ stands for the convolution with the indicator function $\chi$. This reads
\begin{equation*}
\bat{AV}\at{x}=\int_{x-\mbox{$\frac12$}}^{x+\mbox{$\frac12$}} V\at{y}\dint{y}
\end{equation*}
and the elimination of $R$ reveals that \eref{Eqn:TW.Diff} can be viewed as a symmetric but nonlinear and nonlocal eigenvalue problem for the eigenvalue $\si$ and the eigenfunction $V$. The proof that \eref{Eqn:TW.Int} implies \eref{Eqn:TW.Diff} is straight forward and involves only differentiation with respect to $x$; for the reversed statement one has to eliminate the constants of integration by the decay condition $V\in\fspaceL^2\at\Rset$.
\par
Using elementary analysis such as H\"older's inequality we readily verify the estimates
\begin{equation}
\label{Eqn:EstimatesForA}
\norm{AV}_2\leq \norm{V}_2\,,\qquad
\norm{AV}_\infty\leq \norm{V}_2\,,\qquad \norm{\at{AV}^\prime}_2\leq2\, \norm{V}_2\,,
\end{equation}
for any function $V\in\fspaceL^2\at\Rset$, and this implies that the potential energy
\begin{equation}
\label{Eqn:DefEnergy}
\calP\at{V}:=\int_{\Rset}\Phi\bat{\at{AV}\at{x}}\dint{x}\,,
\end{equation}
is well defined as long as $\norm{V}_2<1$. Moreover, we get
\begin{equation}
\label{Eqn:EstimatesForDelta}
\norm{R_\delta}_\infty=R_\delta\at{0}=1-\eps_\delta\leq\norm{V_\delta}_2=1-\delta
\end{equation}
for the family from Assumption \ref{Ass:Waves}.
\begin{lemma}[variant of the localization theorem]
\label{Lem:SimpleConv}
The estimates
\begin{equation}
\label{Lem:SimpleConv.Eqn1}
\norm{V_\delta - \chi}_2\leq C\eps_\delta,\qquad  \norm{R_\delta- A\chi}_\infty\leq C\eps_\delta
\end{equation}
and
\begin{equation}
\label{Lem:SimpleConv.Eqn2}
 c \, \eps_\delta \leq \mu_\delta\leq C\sqrt{\eps_\delta}
\end{equation}
hold for some constants $c$, $C$ independent of $\delta$. Moreover, we have $\eps_\delta\to0$
as $\delta\to0$.
\end{lemma}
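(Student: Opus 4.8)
The plan is to reduce the two localization estimates to elementary Hilbert-space geometry, to deduce $\eps_\delta\to0$ from the energy blow-up, and to read off the bounds on $\mu_\delta$ from the second traveling-wave equation evaluated at the tip $x=0$.

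For the first estimate I would start from the exact expansion of the squared $\fspaceL^2$-distance. Using $\norm{V_\delta}_2=1-\delta$, the identity $\skp{V_\delta}{\chi}=\bat{AV_\delta}\at0=R_\delta\at0=1-\eps_\delta$, and $\norm{\chi}_2^2=1$, one obtains
\begin{equation*}
\norm{V_\delta-\chi}_2^2=\at{1-\delta}^2-2\at{1-\eps_\delta}+1=2\at{\eps_\delta-\delta}+\delta^2.
\end{equation*}
Since \eref{Eqn:EstimatesForDelta} already yields $0\leq\delta\leq\eps_\delta\leq1$, the right-hand side is $\leq2\eps_\delta+\eps_\delta^2\leq3\eps_\delta$, so $\norm{V_\delta-\chi}_2=\DO{\eps_\delta^{1/2}}$ — the same square-root order in $\eps_\delta$ that also surfaces in \eref{Lem:SimpleConv.Eqn2}. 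The second estimate then comes for free: writing $R_\delta-A\chi=A\at{V_\delta-\chi}$ and invoking $\norm{AW}_\infty\leq\norm{W}_2$ from \eref{Eqn:EstimatesForA} converts the $\fspaceL^2$-bound for $V_\delta$ into an $\fspaceL^\infty$-bound for $R_\delta$. A sharper bound of order $\eps_\delta$ can be squeezed out by noting that $A\at{V_\delta-\chi}\at{x}$ is a signed window average whose primitive $\int_{-\infty}^{x}\at{V_\delta-\chi}$ stays small, but the crude mapping property already suffices for the stated estimate.

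Next I would establish $\eps_\delta\to0$ from the standing assumption $p_\delta\to+\infty$. Arguing by contradiction, suppose $\eps_\delta\geq\eps_\ast>0$ along a subsequence. Then unimodality gives $R_\delta\at{x}\leq R_\delta\at0=1-\eps_\delta\leq1-\eps_\ast$ for all $x$, so on the compact range $\ccinterval{0}{1-\eps_\ast}\subset\cointerval01$ the smooth potential obeys $\Phi\at{r}\leq\tfrac12\norm{\Phi^{\prime\prime}}_{\fspaceL^\infty\ccinterval{0}{1-\eps_\ast}}\,r^2=C_\ast r^2$ (using $\Phi\at0=\Phi^\prime\at0=0$). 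Consequently $p_\delta=\int_\Rset\Phi\bat{R_\delta}\leq C_\ast\norm{R_\delta}_2^2\leq C_\ast\norm{V_\delta}_2^2\leq C_\ast$ by \eref{Eqn:EstimatesForA}, which contradicts $p_\delta\to+\infty$.

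Finally, the bounds on $\mu_\delta=\sqrt{\sigma_\delta\eps_\delta^{m+2}}$ amount to sandwiching $\sigma_\delta$ between $c\,\eps_\delta^{-m}$ and $C\,\eps_\delta^{-\at{m+1}}$, which I would extract from $\sigma_\delta V_\delta\at0=\bat{A\Phi^\prime\at{R_\delta}}\at0=\int_{-1/2}^{1/2}\Phi^\prime\bat{R_\delta}$. The upper bound on $\sigma_\delta$ is immediate: on $\ccinterval{-1/2}{1/2}$ one has $1-R_\delta\geq\eps_\delta$, hence $\Phi^\prime\bat{R_\delta}\leq\tfrac1{m+1}\eps_\delta^{-\at{m+1}}$, while $V_\delta\at0\geq\int_{-1/2}^{1/2}V_\delta=1-\eps_\delta$, giving $\sigma_\delta\leq\tfrac1{m+1}\eps_\delta^{-\at{m+1}}/\at{1-\eps_\delta}$ and thus $\mu_\delta\leq C\sqrt{\eps_\delta}$. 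For the matching lower bound I would estimate the tip integral from below by restricting to $\{R_\delta\geq1-2\eps_\delta\}$, on which $\Phi^\prime\bat{R_\delta}\gtrsim\eps_\delta^{-\at{m+1}}$; its half-width $\ell_\delta$ satisfies $\ell_\delta\gtrsim\eps_\delta/V_\delta\at0$ because $\nabs{R_\delta^\prime}\leq2\norm{V_\delta}_\infty=2V_\delta\at0$, which yields $\sigma_\delta V_\delta\at0\gtrsim\eps_\delta^{-m}/V_\delta\at0$. The main obstacle is exactly this stray factor $V_\delta\at0$: the clean conclusion $\sigma_\delta\geq c\,\eps_\delta^{-m}$ needs an a-priori bound $V_\delta\at0\leq C$ on the peak, and such a bound does \emph{not} follow from the $\fspaceL^2$-normalization alone (a tall, thin spike over a plateau is not excluded by the integral constraints). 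It must instead be wrung out of the traveling-wave equation itself — for instance by combining the Lipschitz estimate $\norm{V_\delta^\prime}_\infty\leq\tfrac2{m+1}\eps_\delta^{-\at{m+1}}/\sigma_\delta$ with the normalization to pin the peak height — and closing this loop between $\sigma_\delta$ and $V_\delta\at0$ is the delicate step, while everything else is elementary.
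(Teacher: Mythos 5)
Your treatment of \eref{Lem:SimpleConv.Eqn1}, of $\eps_\delta\to0$, and of the upper bound in \eref{Lem:SimpleConv.Eqn2} coincides with the paper's own proof: the same expansion of $\norm{V_\delta-\chi}_2^2$ via $\skp{V_\delta}{\chi}=R_\delta\at{0}$, the same transfer to $R_\delta$ through $\norm{AW}_\infty\leq\norm{W}_2$, the same comparison of $p_\delta$ with $\norm{R_\delta}_2^2$ once $R_\delta$ is bounded away from the singularity, and the same tip identity $\si_\delta V_\delta\at{0}=\int_{-1/2}^{1/2}\Phi^\prime\bat{R_\delta\at{x}}\dint{x}$ for the upper bound; your observation $V_\delta\at{0}\geq\int_{-1/2}^{1/2}V_\delta=R_\delta\at{0}=1-\eps_\delta$ is in fact tidier than the paper's $\liminf$ argument. (Note that, exactly like the paper's own computation, what you really obtain in \eref{Lem:SimpleConv.Eqn1} is $\norm{V_\delta-\chi}_2\leq C\sqrt{\eps_\delta}$; the mismatch with the stated first power is present in the paper's proof as well, so this is not held against you.)

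The genuine gap is the lower bound $\mu_\delta\geq c\,\eps_\delta$, precisely where you flag it yourself. Your width estimate $\ell_\delta\gtrsim\eps_\delta/V_\delta\at{0}$ feeds the unknown peak height into the conclusion, $\si_\delta V_\delta\at{0}^2\gtrsim\eps_\delta^{-m}$, and the repair you propose does not close the loop: the Lipschitz bound $\norm{V_\delta^\prime}_\infty\leq C/\at{\si_\delta\eps_\delta^{m+1}}$ together with the normalization gives only $V_\delta\at{0}^3\leq C/\at{\si_\delta\eps_\delta^{m+1}}$, and combining this with $\si_\delta\gtrsim\eps_\delta^{-m}V_\delta\at{0}^{-2}$ yields merely $V_\delta\at{0}\leq C\eps_\delta^{-1}$, hence $\si_\delta\gtrsim\eps_\delta^{2-m}$ and $\mu_\delta\gtrsim\eps_\delta^{2}$ --- short of the claim by a full factor of $\eps_\delta$. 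The paper circumvents the peak height of $V_\delta$ entirely: from \eref{Eqn:TW.Diff} it bounds the \emph{curvature}, $\norm{R_\delta^{\prime\prime}}_\infty\leq C/\at{\si_\delta\eps_\delta^{m+1}}$, which with $R_\delta\at{0}=1-\eps_\delta$ and $R_\delta^\prime\at{0}=0$ shows $R_\delta\geq1-C\eps_\delta$ on the self-referential window $\abs{x}<\mu_\delta=\sqrt{\si_\delta\eps_\delta^{m+2}}$; inserting this into the \emph{integrated} identity $\si_\delta\at{1-\delta}^2=\bskp{\Phi^\prime\at{R_\delta}}{R_\delta}$, in which $V_\delta\at{0}$ never appears, gives $\si_\delta\gtrsim\mu_\delta\,\eps_\delta^{-m-1}$ and hence $\mu_\delta\geq c\,\eps_\delta$. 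Alternatively, your route could be saved by supplying the missing bound $V_\delta\at{0}\leq C$ from that same integrated identity: unimodality and evenness give $R_\delta\at{x}\geq R_\delta\bat{\pm\mbox{$\frac12$}}=\int_0^1V_\delta\geq\mbox{$\frac12$}R_\delta\at{0}$ for $\abs{x}\leq\mbox{$\frac12$}$, so $\si_\delta\at{1-\delta}^2\geq\int_{-1/2}^{1/2}\Phi^\prime\at{R_\delta}R_\delta\geq\mbox{$\frac12$}\at{1-\eps_\delta}\,\si_\delta V_\delta\at{0}$, whence $V_\delta\at{0}\leq C$ --- but some such step must be added before your argument proves the stated lower bound.
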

\begin{proof}
We start with the identities
\begin{equation}
\label{Lem:SimpleConv.PEqn1a}
R_\delta\at{0}=\bskp{V_\delta}{\chi}\,,\qquad
\norm{V_\delta-\chi}_2^2=\norm{V_\delta}_2^2 +\norm{\chi}_2^2 - 2 \bskp{V_\delta}{\chi}\,,
\end{equation}
where $\skp{\cdot}{\cdot}$ denotes the usual inner product in $\fspaceL^2\at\Rset$, and
observe that \eref{Eqn:TW.Int} implies
\begin{equation}
\label{Lem:SimpleConv.PEqn1b} \si_\delta\at{1-\delta}^2=\bskp{\Phi^\prime\at{R_\delta}}{R_\delta}\,.
\end{equation}
Since \eref{Eqn:EstimatesForDelta} yields $\delta\leq\eps_\delta$, we find
\begin{equation*}
0\leq \norm{V_\delta-\chi}_2^2=\at{1-\delta}^2+1-2\at{1-\eps_\delta}\leq C\eps_\delta
\end{equation*}
and hence \eref{Lem:SimpleConv.Eqn1}$_1$, which in turn implies
\eref{Lem:SimpleConv.Eqn1}$_2$ thanks to  $R_\delta- A\chi=A(V_\delta -\chi)$ and \eref{Eqn:EstimatesForA}$_2$. By \eref{Eqn:TW.Int} we also have
\begin{equation*}
\mu_\delta^2\, V_\delta\at{0}=\eps_\delta^{m+2}\int_{-\frac12}^{+\frac12}\Phi^\prime\bat{R_\delta\at{x}}\leq \eps_\delta^{m+2}\Phi^\prime\at{1-\eps_\delta}\leq C \eps_\delta\,,
\end{equation*}
and this provides the upper bound in \eref{Lem:SimpleConv.Eqn2} since the unimodality of $V_\delta$ combined with \eref{Lem:SimpleConv.Eqn1}$_1$ guarantees that $\liminf_{\delta\to0} V_\delta\at{0}>0$. To obtain the corresponding lower bound, we notice that \eref{Eqn:TW.Diff} ensures
\begin{equation*}
\norm{R_\delta^{\prime\prime}}_\infty\leq \frac{4\norm{\Phi^\prime\at{R_\delta}}_\infty}{\si_\delta} \leq \frac{C}{\si_\delta\eps_\delta^{m+1}}
\end{equation*}
and hence
\begin{equation*}
R_\delta\at{x}\geq 1-C\eps_\delta\quad{\rm for}\;{\rm all}\quad \abs{x}<\sqrt{\si_\delta \eps_\delta^{m+2}}=\mu_\delta
\end{equation*}
due to $R_\delta^\prime\at{0}=0$ and $R_\delta\at{0}=1-\eps_\delta$. Combining this with \eref{Lem:SimpleConv.PEqn1b} we obtain
\begin{equation*}
\frac{\mu_\delta^2}{\eps_{\delta}^{m+2}}\at{1-\delta}^2\geq\int_{-\mu_\delta}^{+\mu_\delta} \Phi^\prime\bat{R_\delta\at{x}}R_\delta\at{x}\dint{x}\geq  c \frac{
\mu_\delta}{\eps_\delta^{m+1}},
\end{equation*}
and the proof of \eref{Lem:SimpleConv.Eqn2} is complete. Finally, the properties of $\Phi$ imply
\begin{equation*}
p_\delta=\calP\at{V_\delta}\leq \eps_\delta^{-m}\norm{R_\delta}_2^2\leq C \eps_{\delta}^{-m}
\end{equation*}
so $\eps_\delta\to0$ is a consequence of $p_\delta\to\infty$.
\end{proof}
%
%
\subsection{Justification of Assumption \ref{Ass:Waves}\label{sect:Pre:Waves}}\label{sect:justification}
%
We briefly sketch how Assumption \ref{Ass:Waves} can be justified using a constrained optimization approach. All key arguments are presented in \cite{Her10} for non-singular
potentials $\Phi$ but can easily be adapted to the potential \eref{Eqn:Pot}. At the end of this section we also discuss the results from \cite{Tre04} and \cite{FM02}.
\par
The variational approach from \cite{Her10} is based on the potential energy functional
\eref{Eqn:DefEnergy}, which is convex and G\^{a}teaux differentiable on the open
unit ball in $\fspaceL^2\at{\Rset}$; the derivative of $\calP$ is given by
\begin{equation*}
\partial_V \calP\at{V} = A \Phi^\prime\at{AV}\,,
\end{equation*}
so the traveling wave equation \eref{Eqn:TW.Int} is equivalent to $\si V=\partial_V \calP\at{V}$.
We further introduce the cone $\calC$ of all $\fspaceL^2$-functions that are even, unimodal and nonnegative, i.e. we set
\begin{equation*}
\calC:=\{V\in\fspaceL^2\at\Rset:\mbox{$0\leq V\at{x}\leq V\at{y}=V\at{-y}$ for almost all  $x\leq y\leq 0$} \}\,.
\end{equation*}
The key observation is that solitary waves as in Assumption \ref{Ass:Waves} can be constructed as solutions to the constrained optimization problem
\begin{eqnarray}
\label{Eqn:OptProb}
\eqalign{
\mbox{Maximize $\calP$ under the norm constraint $\norm{V}_2=1{-}\delta$ and} \\
\mbox{the shape constraint $V\in\calC$.}
}
\end{eqnarray}
In the existence proof one has to ensure that maximizers do in fact
exist and that the shape constraint does not contribute to the Euler-Lagrange equation for the maximizer. With respect to the latter issue we introduce
the \emph{improvement operator}
\begin{equation*}
\calT_\delta\at{V}:=\at{1-\delta}\frac{A\Phi^\prime\at{AV}}{\norm{A\Phi^\prime\at{AV}}_2}\,.
\end{equation*}
We are now able to describe the key arguments in the variational existence proof for solitary waves with profiles in $\calC$.
\begin{lemma}[three ingredients]
\label{Lem:OptProb}
\quad
\begin{enumerate}
\item Since $\Phi$ is strictly super-quadratic, each maximizing sequence for \eref{Eqn:OptProb} is strongly compact.
\item The cone $\calC$ is invariant under the actions of both
the convolution operator $A$ and the superposition operator $\Phi^\prime$. In particular, $V\in\calC$ implies $R\in\calC$ and
$\calT_\delta\at{V}\in\calC$.
\item We have $\calP\bat{\calT_\delta\at{V}}\geq\calP\at{V}$, where the equality sign holds if an only if $V$ is a fixed point of $\calT_\delta$.
\end{enumerate}
\end{lemma}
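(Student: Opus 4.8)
The three claims are largely independent, so I would prove them separately, using the convexity and super-quadratic growth of $\Phi$ together with the rearrangement structure built into the cone $\calC$.

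For item (i), the plan is to combine the norm constraint with the super-quadratic growth of $\Phi$. Because $\calP$ is bounded on any bounded set of the unit ball and the constraint $\norm{V}_2 = 1-\delta$ is fixed, a maximizing sequence $\at{V_n}$ is bounded in $\fspaceL^2\at\Rset$ and hence has a weakly convergent subsequence with some limit $V_\ast$. The difficulty is to upgrade weak to strong convergence, since $\calP$ is not weakly continuous. Here I would exploit that each $V_n$ lies in $\calC$: an even, unimodal, nonnegative function of fixed $\fspaceL^2$-norm cannot spread mass off to infinity without losing potential energy, because $\Phi$ grows strictly faster than quadratically and the convolution $A$ controls the sup-norm via \eref{Eqn:EstimatesForA}$_2$. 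Concretely, I expect to show that the monotone tail bounds forced by unimodality rule out vanishing and dichotomy, so that along a subsequence $AV_n\to AV_\ast$ strongly enough (e.g. locally uniformly plus a uniform tail estimate) to pass to the limit in $\calP$; strict super-quadraticity then forces $\norm{V_n}_2\to\norm{V_\ast}_2$ and hence strong $\fspaceL^2$-convergence. \emph{This compactness step is the main obstacle,} as it is the only place where a genuine concentration-compactness argument is needed rather than a direct computation.

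For item (ii), the plan is purely structural and should be routine. The cone $\calC$ consists of even, nonnegative, unimodal functions. Since $\Phi^\prime$ is nonnegative and increasing on $\cointerval{0}{1}$ with $\Phi^\prime\at0=0$, the superposition operator $V\mapsto\Phi^\prime\at V$ preserves nonnegativity, evenness, and monotonicity, hence maps $\calC$ into $\calC$. For the convolution $A$, evenness of $\chi$ preserves evenness, nonnegativity of $\chi$ preserves nonnegativity, and the key point is that convolving a nonnegative even unimodal function with the even indicator $\chi$ again yields an even unimodal function; I would verify unimodality of $AV$ directly from $\at{AV}^\prime\at x = V\at{x+\frac12}-V\at{x-\frac12}$, which has the correct sign pattern because $V$ is even and unimodal. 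Composing these facts gives $R=AV\in\calC$ and, since $\calT_\delta$ is built from $A\Phi^\prime\at{AV}$ followed by normalization, also $\calT_\delta\at V\in\calC$.

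For item (iii), the plan is to use convexity of $\calP$ together with the Euler--Lagrange structure. Writing $W:=A\Phi^\prime\at{AV}=\partial_V\calP\at V$, convexity of $\calP$ gives the supporting-hyperplane inequality $\calP\at{V^\prime}\geq\calP\at V+\bskp{W}{V^\prime-V}$ for all admissible $V^\prime$. Applying this with $V^\prime=\calT_\delta\at V=\at{1-\delta}W/\norm{W}_2$ and using $\norm{V}_2=1-\delta$ reduces the claim to the elementary inequality $\bskp{W}{\calT_\delta\at V - V}\geq0$, which follows from Cauchy--Schwarz: $\bskp{W}{V}\leq\norm{W}_2\norm{V}_2=\at{1-\delta}\norm{W}_2=\bskp{W}{\calT_\delta\at V}$. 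Equality in Cauchy--Schwarz holds precisely when $V$ is a nonnegative multiple of $W$, and matching the norms forces $V=\calT_\delta\at V$; conversely a fixed point trivially gives equality. Tracking the equality case through the convexity inequality then yields the stated "if and only if," completing the proof.
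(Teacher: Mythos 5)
Your proposal is correct and follows essentially the same route as the paper, which itself only sketches the argument and defers details to \cite{Her10}: a variant of the Concentration Compactness Principle for (i), direct structural verification of the invariances for (ii), and the convexity (supporting-hyperplane) inequality combined with the norm constraint for (iii). The only cosmetic difference is in (iii), where you deduce $\skp{\partial_V\calP\at{V}}{\calT_\delta\at{V}-V}\geq 0$ from Cauchy--Schwarz, while the paper rewrites this same quantity as $\frac12\,\si\at{V}\norm{\calT_\delta\at{V}-V}_2^2$ using $\norm{\calT_\delta\at{V}}_2=\norm{V}_2=1-\delta$, which delivers the equality case (and a quantitative lower bound) in one stroke.
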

\begin{proof}[Sketch of the proof]
The main steps can be summarized as follows:
\begin{enumerate}
\item
The assertion follows by a variant of the \emph{Concentration Compactness Principle}.
\item
The invariance properties can be checked by straight forward calculations.
\item
Since $\Phi$ is convex, we have
\begin{eqnarray*}
\calP\bat{\calT_\delta\at{V}}-\calP\at{V}&\geq&\bskp{\partial_V \calP\at{V}}{\calT_\delta\at{V}-V}\\&=&\si\at{V}\bskp{\calT_\delta\at{V}}{\calT_\delta\at{V}-V}
\\&=&
\mbox{$\frac12$}{\si\at{V}\norm{\calT_\delta\at{V}-V}_2^2}\,,
\end{eqnarray*}
where we used $\norm{\calT_\delta\at{V}}_2=\norm{V}_2=1-\delta$
and that $\si\at{V}:=\norm{A\Phi^\prime\at{AV}}_2/\at{1-\delta}$ is well defined as long as $\calP\at{V}>0$.
\end{enumerate}
The details can be found in \cite{Her10}.
\end{proof}
\begin{corollary}[variational existence proof]
For any $0<\delta<1$, there exists a solitary wave with $V_\delta, R_\delta\in
\calC\cap\fspace{BC}^\infty\at\Rset$ and $\norm{V_\delta}=1-\delta$. Moreover, we have
$\calP\at{V_\delta}\to\infty$ as $\delta\to0$.
\end{corollary}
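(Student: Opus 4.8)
The plan is to apply the direct method to the constrained problem \eref{Eqn:OptProb} and then to upgrade the maximizer to a genuine solution of \eref{Eqn:TW.Int} by means of the improvement operator, using throughout the three ingredients collected in Lemma \ref{Lem:OptProb}. First I would fix $\delta\in\oointerval{0}{1}$, abbreviate $\calM_\delta:=\sup\{\calP\at{V}:V\in\calC,\ \norm{V}_2=1-\delta\}$, and choose a maximizing sequence $\at{V_n}_n$ with $V_n\in\calC$, $\norm{V_n}_2=1-\delta$, and $\calP\at{V_n}\to\calM_\delta$. By ingredient (i) this sequence is strongly precompact in $\fspaceL^2\at\Rset$, so along a subsequence $V_n\to V_\delta$ in $\fspaceL^2$. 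The cone $\calC$ is closed under strong $\fspaceL^2$-limits, since its defining pointwise inequalities survive along an almost-everywhere convergent subsequence, and the constraint sphere lies inside the open unit ball on which $\calP$ is continuous. Hence $V_\delta\in\calC$, $\norm{V_\delta}_2=1-\delta$, and $\calP\at{V_\delta}=\calM_\delta$. Testing against a single admissible profile already gives $\calM_\delta>0$, which I will need below.

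The crucial step is to show that $V_\delta$ solves \eref{Eqn:TW.Int} without any contribution from the shape constraint, and here I would invoke the rigidity in ingredient (iii) instead of writing a constrained Euler--Lagrange inequality. By ingredient (ii), the competitor $\calT_\delta\at{V_\delta}$ again lies in $\calC$ and has norm $1-\delta$, so it is admissible; ingredient (iii) then gives $\calP\bat{\calT_\delta\at{V_\delta}}\geq\calP\at{V_\delta}=\calM_\delta$. Maximality forces equality, and the equality case of (iii) yields the fixed-point identity $\calT_\delta\at{V_\delta}=V_\delta$. Unravelling the definition of $\calT_\delta$, this is precisely $\si_\delta V_\delta=A\Phi^\prime\at{AV_\delta}$ with $\si_\delta:=\norm{A\Phi^\prime\at{AV_\delta}}_2/\at{1-\delta}>0$, so with $R_\delta:=AV_\delta$ we recover both equations of \eref{Eqn:TW.Int}. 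The whole point of the improvement operator is that the constraint $V\in\calC$ never enters this equation.

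For the regularity I would bootstrap using the smoothing of $A$ together with the smoothness of $\Phi$ away from its singularity. Since $\norm{R_\delta}_\infty=\norm{AV_\delta}_\infty\leq\norm{V_\delta}_2=1-\delta<1$ by \eref{Eqn:EstimatesForA}, the profile $R_\delta$ takes values in the compact interval $\ccinterval{0}{1-\delta}$, on which $\Phi$ and all of its derivatives are bounded. Convolution with $\chi$ raises the differentiability class by one, because $\at{Af}^\prime\at{x}=f\at{x+\frac12}-f\at{x-\frac12}$, so the loop $R_\delta\mapsto\Phi^\prime\at{R_\delta}\mapsto A\Phi^\prime\at{R_\delta}=\si_\delta V_\delta\mapsto AV_\delta=R_\delta$ gains one derivative each time it is traversed. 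Iterating places $V_\delta,R_\delta\in\fspace{BC}^\infty\at\Rset$, and the bounds in \eref{Eqn:EstimatesForA} keep every derivative bounded.

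Finally, for the blow-up $\calP\at{V_\delta}\to\infty$ I would test the maximum against the explicit admissible function $\at{1-\delta}\chi\in\calC$, whose image $A\bat{\at{1-\delta}\chi}=\at{1-\delta}A\chi$ is the scaled tent map attaining the value $1-\delta$ at the origin. On $\ccinterval{0}{1}$ one has $1-\at{1-\delta}\at{1-x}=\delta+\at{1-\delta}x$, so the singular form of $\Phi$ in \eref{Eqn:Pot} gives
\begin{equation*}
\calP\at{V_\delta}\geq\calP\bat{\at{1-\delta}\chi}\geq c\int_{0}^{1}\frac{\dint{x}}{\bat{\delta+\at{1-\delta}x}^{m}}\geq c^\prime\bat{\delta^{1-m}-1}\,,
\end{equation*}
which diverges as $\delta\to0$ precisely because $m>1$; this is the one place where the standing assumption $m>1$ enters quantitatively. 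I expect the main obstacle to be conceptual and already resolved by Lemma \ref{Lem:OptProb}, namely converting the constrained maximizer into an unconstrained critical point through the improvement operator. The remaining pieces --- the direct method, the regularity bootstrap, and the test-function estimate --- are then routine.
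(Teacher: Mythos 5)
Your proposal is correct and follows essentially the same route as the paper: the direct method combined with the compactness from Lemma \ref{Lem:OptProb}(i), the fixed-point rigidity of the improvement operator $\calT_\delta$ via ingredients (ii) and (iii) to obtain the traveling wave equation \eref{Eqn:TW.Int}, and the explicit test function $\at{1-\delta}\chi$ for the energy blow-up. You simply supply more detail than the paper does (closedness of $\calC$, the regularity bootstrap through $A$, and the explicit integral bound $\delta^{1-m}-1$), all of which is sound.
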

\begin{proof}
The existence of a solution $V_\delta$ to \eref{Eqn:OptProb} can be established by the \emph{Direct Method} due to the compactness result from Lemma \ref{Lem:OptProb} and since
$\calP$ is strongly continuous. Moreover, any maximizer $V_\delta$ satisfies
\begin{equation*}
\calP\at{V_\delta}\geq \calP\bat{\calT_\delta \at{V_\delta}}\,,\qquad
\calP\at{V_\delta}\geq \calP\bat{\at{1-\delta}\,\chi}\,.
\end{equation*}
The first estimate implies
$\calP\at{V_\delta}= \calP\bat{T_\delta \at{V_\delta}}$, so
$V_\delta$ satisfies the traveling wave equation \eref{Eqn:TW.Int} and is therefore smooth. We also compute
\begin{equation*}
\calP\bat{\at{1-\delta}\chi}=2\int_{0}^1\Phi\bat{\at{1-\delta}\at{1-x}}\dint{x}\quad\xrightarrow{\delta\to0}\quad +\infty
\end{equation*}
and the proof is complete.
\end{proof}
The improvement operator $\calT_\delta$ can also be used to compute solitary waves numerically. In fact, imposing homogeneous Dirichlet boundary conditions on a large but bounded and fine grid, the integral operator $A$ can easily be discretized by Riemann sums. The resulting recursive scheme exhibits very good convergence properties; it has been applied to a wide range of potentials, see for instance \cite{EP05, Her10}, and also been used to compute the numerical data displayed in Figures \ref{Fig.NumWavesProf} and \ref{Fig.NumWavesData}.
\bigpar
A different variational framework has been introduced in \cite{FW96} and later been applied to the high-energy limit in \cite{FM02}. The
key idea there is to minimize the kinetic energy term $\frac12\norm{V}_2^2$ subject to a prescribed value of $p=\calP\at{V}$. The results from \cite{FM02} imply for the potential \eref{Eqn:Pot} that solitary waves converge as $p\to\infty$ to the limit function $\chi$ and satisfy Assumption \ref{Ass:Waves}
(though, strictly speaking, neither the unimodality nor the evenness of the profile functions $R$ and $V$ have been shown).
In this context we emphasize again that uniqueness of Hamiltonian lattice waves
is a notoriously difficult and an almost completely open problem. It is commonly believed that all variational and non-variational approaches provide -- up to reparametrizations and for, say, convex potentials -- the same family of solitary wave but there seems to be no proof so far.
\par
A non-variational existence proof for solitary waves with high energy has been given in \cite{Tre04} using a carefully designed fixed-point argument for the (negative) distance profile $R$ in the space of exponentially decaying functions. In our notations, the
smallness parameter is $\eps$ and the waves are shown to satisfy
$\norm{R-A\chi}=\DO{\eps}$ in some suitably chosen norm. We therefore expect that the waves constructed in \cite{Tre04} also satisfy Assumption \ref{Ass:Waves}, although the justification of the unimodality might be an issue.
%
%
%
%
\section{Main result on the tip scaling \texorpdfstring{of $R_\delta$}{}}
\label{sect:tipscaling}
%
Our first goal is to describe the asymptotic behavior of the distance profile $R_\delta$ near $x=0$ by showing that it converges as $\delta\to0$ under an appropriately defined rescaling to some nontrivial limit function. In view of theorem \ref{Thm:Localization} and the numerical simulations from figure \ref{Fig.NumWavesData} we expect that both the variable $x$ as well as the shifted amplitude variable $1-R_\delta\at{\delta}$ must be scaled with certain powers
of $\delta$. A naive ansatz, however, does not work here because we lack
a priori scaling relations between the small parameters
$\delta$, $\eps_\delta$, and $\mu_\delta$. For instance, if we would start with the rescaling
\begin{equation*}
R_\delta\at{x}=1-{\delta}^{\ga_1} \bar{R}_\delta\at{\delta^{\ga_2}x}\,,
\end{equation*}%
we could not eliminate $\si_\delta$ in the leading order equation.
To overcome this problem we base our analysis on an implicit scaling, which magnifies the amplitude with $\eps_\delta$ but defines the rescaled space variable
by
\begin{equation*}
\tilde{x}=\mu_\delta\, x\,.
\end{equation*}
In this way we obtain an explicit leading order equation that does not involve any unknown parameter and can hence be solved.  Moreover, the corresponding solution finally allows us to identify the scaling relations between the different parameters; at the end it turns out that $\delta$, $\eps_\delta$ and $\mu_\delta$ are all proportional to each other, see Corollary \ref{Cor:ScalingLaws}.
%
%
%
\subsection{Implicit rescaling \texorpdfstring{of $R_\delta$}{}}
\label{sect:tipscaling.1}
%
%
\begin{figure}[t!]
\centering{%
\includegraphics[width=0.95\textwidth]{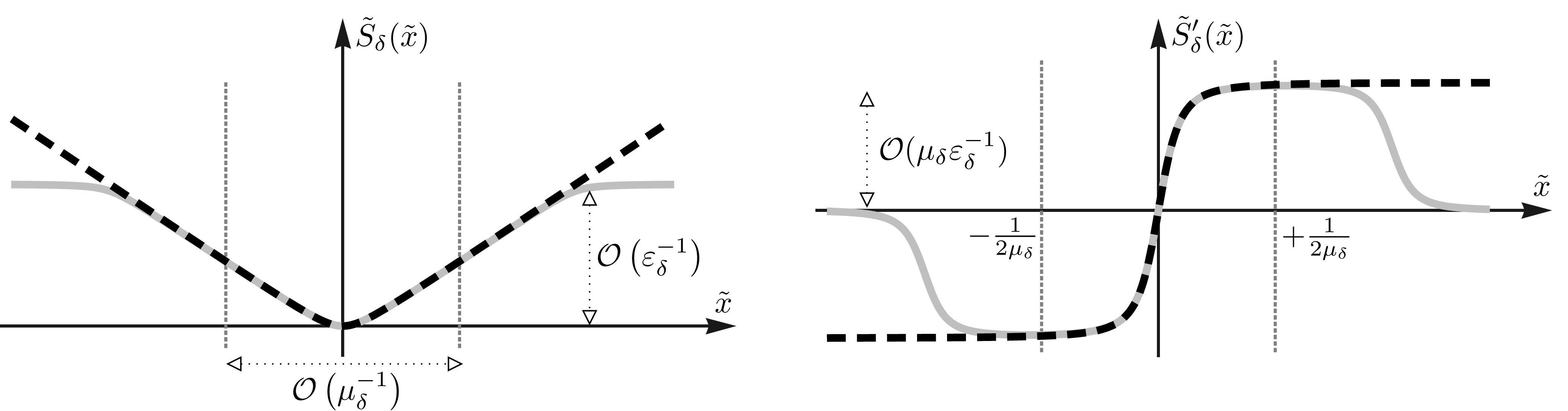}
}%
\caption{Cartoon of the tip scaling: The function $\tilde{S}_\delta$ and its derivative for $\delta>0$ (gray, solid) and $\delta=0$ (black, dashed). The dotted vertical lines enclose the symmetric interval  $J_\delta$ which has length $1/\mu_\delta\gg1$. The convergence $\tilde{S}_\delta\to\tilde{S}_0$ as $\delta\to0$ implies $\mu_\delta\sim\eps_\delta$, see Theorem \ref{Thm:TS.Asymptotics} and Corollary \ref{Cor:TS.ConvPrms}.}%
\label{Fig.Convergence.TipScaling}%
\end{figure}
In order to derive asymptotic formulas for $R_\delta$ near $x=0$, we
define the rescaled distance profile
\begin{eqnarray}
\label{Eqn:TipS.Def1}%
\tilde{S}_\delta\at{\tilde{x}}:=
\frac{R_\delta\at{0}-R_\delta\at{\mu_\delta\tilde{x}}}{\eps_\delta}=
\frac{1-\eps_\delta-R_\delta\at{\mu_\delta\tilde{x}}}{\eps_\delta}
\end{eqnarray}
and obtain an even function which satisfies
\begin{equation}
\label{Eqn:TipS.Id0}%
\tilde{S}_\delta\at{0}=\tilde{S}_\delta^\prime\at{0}=0\,,
\qquad \tilde{S}_\delta\at{\tilde{x}}=\tilde{S}_\delta\at{-\tilde{x}}\,,
\end{equation}
 see Figure \ref{Fig.Convergence.TipScaling} for an illustration. We also introduce the auxiliary functions
\begin{eqnarray}
\label{Eqn:TipS.Def2}%
\tilde{F}_\delta\at{\tilde{x}}&:=\eps_\delta^{m+1}\Phi^\prime\Bat{R_\delta\at{\mu_\delta\tilde{x}}}
\\%
\label{Eqn:TipS.Def3}%
\tilde{G}_\delta\at{\tilde{x}}&:=\eps_\delta^{m+1}\Phi^\prime\Bat{R_\delta\at{-1+\mu_\delta\tilde{x}}},
\end{eqnarray}
as well as the intervals
\begin{equation*}
I_\delta:=\left[0,\,\frac{1}{2\mu_\delta}\right]\,,\qquad J_\delta = \at{-I_\delta}\cup I_\delta
\end{equation*}
and study the limit of $\tilde{S}_\delta$ restricted to $J_\delta$.
\bigpar
Employing the identity \eref{Eqn:TW.Diff}$_1$ as well as \eref{Eqn:DefEpsMu}
we readily verify
\begin{eqnarray*}
\tilde{S}^{\prime\prime}_\delta\at{\tilde{x}}&=&
\frac{\mu_\delta^2}{\eps_\delta}\Bat{V_\delta^\prime\at{\mu_\delta\tilde{x}-\mbox{$\frac12$}}-V_\delta^\prime\at{\mu_\delta\tilde{x}+\mbox{$\frac12$}}}
\\&=&\si_\delta\eps_\delta^{m+1}\Bat{V_\delta^\prime\at{\mu_\delta\tilde{x}-\mbox{$\frac12$}}-V_\delta^\prime\at{\mu_\delta\tilde{x}+\mbox{$\frac12$}}}
\end{eqnarray*}
and by \eref{Eqn:TW.Diff}$_2$ we arrive at
\begin{equation}
\label{Eqn:TipS.Id1}%
\tilde{S}^{\prime\prime}_\delta\at{\tilde{x}}= 2\tilde{F}_\delta\at{\tilde{x}}-\tilde{G}_\delta\at{\tilde{x}}-\tilde{G}_\delta\at{-\tilde{x}}\,,
\end{equation}
where we used that $R_\delta\at{1+\mu_\delta\tilde{x}}= R_\delta\at{-1-\mu_\delta\tilde{x}}$.
The definition of $\tilde{G}_\delta$ combined with the unimodality of $R_\delta$ implies
\begin{equation}
\label{Eqn:TS.GEstimates}
0\leq\tilde{G}_\delta\at{\tilde{x}}\leq
\tilde{G}_\delta\at{\frac{1}{2\mu_\delta}}=
\eps_\delta^{m+1}\Phi^\prime\bat{R_\delta\at{\mbox{$\frac12$}}}\leq C\eps_\delta^{m+1}\mbox{\quad for all $\tilde{x}\in J_\delta$ }
\end{equation}
and from \eref{Eqn:TipS.Def2} we get
\begin{equation}
\label{Eqn:TipS.Id2}
\tilde{F}_\delta\at{\tilde{x}}=
\frac{\Psi\at{\eps_\delta+\eps_\delta\tilde{S}_\delta\at{\tilde{x}}}}{\at{1+\tilde{S}_\delta\at{\tilde{x}}}^{m+1}}\,,
\end{equation}
where the function $\Psi:\ccinterval{0}{1}\to\Rset_+$ with
\begin{equation*}
\Psi\at{s}:=\Phi^\prime\at{1-s}s^{m+1}\quad \mbox{for}\quad 0<s\leq1\,,\qquad
\Psi\at{0}:=\lim_{s\searrow0}\Psi\at{s}=\frac{1}{m+1}
\end{equation*}
is smooth and positive. In particular, on the interval $J_\delta$ we find
\begin{equation*}
\tilde{S}_\delta^{\prime\prime} \at{\tilde{x}}\approx
2\tilde{F}_\delta \at{\tilde{x}}\approx
\frac{2}{m+1}\cdot\frac{1}{\at{1+\tilde{S}_\delta\at{\tilde{x}}}^{m+1}}\,,
\end{equation*}
and conclude that $\tilde{S}_\delta$ satisfies the initial value problem \eref{Eqn:LimitIVP} from the introduction up to small error terms.
\begin{lemma}[solution of the limit problem]
\label{Lem:TS.LimitProfile}
The initial value problem \eref{Eqn:LimitIVP} has a unique solution which is even, nonnegative, and convex. This solution $\tilde{S}_0$
grows linearly for $\tilde{x}\to\pm\infty$ as it
satisfies
\begin{equation}
\label{Lem:TS.LimitProfile.Eqn2a}
\abs{\tilde{S}_0^\prime\at{\tilde{x}}-
\ol{\mu}\,\mathrm{sgn}\at{\tilde{x}}}\leq \frac{C}{\at{1+\tilde{x}}^{m}}
\end{equation}
and
\begin{equation}
\label{Lem:TS.LimitProfile.Eqn2b}
\abs{\tilde{x}\tilde{S}_0^\prime\at{\tilde{x}}-
\tilde{S}_0\at{\tilde{x}}-\ol{\ka}}\leq\frac{C}{\at{1+\tilde{x}}^{m-1}}
\end{equation}
for all $\tilde{x}\in\Rset$ with
\begin{equation}
\label{Lem:TS.LimitProfile.Consts}
\ol{\mu}:=\frac{2}{\sqrt{m\at{m+1}}}\,,\quad \ol{\ka}:=\int_{0}^\infty \tilde{x}\,\tilde{S}_0^{\prime\prime}\at{\tilde{x}}\dint{\tilde{x}}
\,,\quad
\ol{\eta}:=\int_{0}^\infty {\tilde{S}_0\at{\tilde{x}}}\,\tilde{S}_0^{\prime\prime}\at{\tilde{x}}\dint{\tilde{x}}
\end{equation}
and some constant $C$ which depends only on $m$.
\end{lemma}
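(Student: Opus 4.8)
The plan is to dispatch the four structural claims by elementary ODE theory and then to read off both asymptotic bounds from a single first integral. Since the right-hand side $\tilde{S}\mapsto\frac{2}{m+1}\at{1+\tilde{S}}^{-(m+1)}$ is smooth and locally Lipschitz on $\{\tilde{S}>-1\}$, the Picard--Lindel\"of theorem provides a unique local solution through the data $\tilde{S}_0\at{0}=\tilde{S}_0^\prime\at{0}=0$. The right-hand side is strictly positive wherever $\tilde{S}_0>-1$, so $\tilde{S}_0^{\prime\prime}>0$ and the solution is convex; together with the vanishing initial slope this forces $\tilde{S}_0\geq0$, which keeps the trajectory uniformly away from the singularity at $-1$ and bounds $\tilde{S}_0^{\prime\prime}\leq\frac{2}{m+1}$. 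Hence there is no finite-time blow-up and the solution extends globally. Evenness is then immediate from uniqueness, since $\tilde{x}\mapsto\tilde{S}_0\at{-\tilde{x}}$ solves the same autonomous initial value problem and must therefore coincide with $\tilde{S}_0$.

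The heart of the argument is the first integral obtained by multiplying the ODE by $\tilde{S}_0^\prime$ and integrating from the origin. Recognising the right-hand side as an exact derivative, I would arrive at
\begin{equation*}
\bat{\tilde{S}_0^\prime\at{\tilde{x}}}^2=\ol{\mu}^2\Bat{1-\at{1+\tilde{S}_0\at{\tilde{x}}}^{-m}}\,,\qquad \ol{\mu}^2=\frac{4}{m\at{m+1}}\,,
\end{equation*}
valid for $\tilde{x}\geq0$, which upon letting $\tilde{x}\to\infty$ (so that $\tilde{S}_0\to\infty$) identifies the limiting slope $\ol{\mu}$. To upgrade this into the decay rate \eref{Lem:TS.LimitProfile.Eqn2a} I would first extract a linear lower bound: once $\tilde{S}_0\geq1$ the bracket exceeds $1-2^{-m}$, so $\tilde{S}_0^\prime$ is bounded below by a positive constant and hence $1+\tilde{S}_0\at{\tilde{x}}\geq c\at{1+\tilde{x}}$. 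The elementary inequality $1-\sqrt{1-u}\leq u$ (for $0\leq u\leq1$) then yields
\begin{equation*}
\babs{\tilde{S}_0^\prime\at{\tilde{x}}-\ol{\mu}}=\ol{\mu}\,\abs{1-\sqrt{1-\at{1+\tilde{S}_0}^{-m}}}\leq\ol{\mu}\,\at{1+\tilde{S}_0}^{-m}\leq\frac{C}{\at{1+\tilde{x}}^m}\,,
\end{equation*}
and the factor $\mathrm{sgn}\at{\tilde{x}}$ for $\tilde{x}<0$ follows because $\tilde{S}_0^\prime$ is odd.

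For the second estimate \eref{Lem:TS.LimitProfile.Eqn2b} I would use the identity $\frac{\mathrm{d}}{\mathrm{d}\tilde{x}}\bat{\tilde{x}\tilde{S}_0^\prime-\tilde{S}_0}=\tilde{x}\tilde{S}_0^{\prime\prime}$, which integrates, thanks to the zero initial data, to
\begin{equation*}
\tilde{x}\tilde{S}_0^\prime\at{\tilde{x}}-\tilde{S}_0\at{\tilde{x}}=\int_0^{\tilde{x}}\tilde{y}\,\tilde{S}_0^{\prime\prime}\at{\tilde{y}}\dint{\tilde{y}}\,.
\end{equation*}
Subtracting $\ol{\ka}$ converts the left-hand side into the negative tail $-\int_{\tilde{x}}^\infty\tilde{y}\,\tilde{S}_0^{\prime\prime}\at{\tilde{y}}\dint{\tilde{y}}$; the linear lower bound gives $\tilde{S}_0^{\prime\prime}\at{\tilde{y}}\leq C\at{1+\tilde{y}}^{-(m+1)}$, so the integrand is $\DO{\tilde{y}^{-m}}$ and the tail is bounded by $C\at{1+\tilde{x}}^{-(m-1)}$. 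This is exactly where the hypothesis $m>1$ becomes indispensable: it is what makes the improper integral defining $\ol{\ka}$ (and likewise the one defining $\ol{\eta}$) convergent and the tail summable. I expect this integrability bookkeeping---securing the linear lower bound before the spatial decay of $\tilde{S}_0^{\prime\prime}$ can be read off---rather than any individual estimate to be the main obstacle.
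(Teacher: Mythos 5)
Your proposal is correct and follows essentially the same route as the paper: your first integral $\bat{\tilde{S}_0^\prime}^2=\ol{\mu}^2\bat{1-\at{1+\tilde{S}_0}^{-m}}$ is precisely the paper's conserved energy $E_{\rm tot}\equiv\frac12\ol{\mu}^2$, and your treatment of $\tilde{x}\tilde{S}_0^\prime-\tilde{S}_0$ via its derivative $\tilde{x}\tilde{S}_0^{\prime\prime}$ and the tail integral is exactly the paper's argument with the function $\tilde{K}_0$. The only cosmetic difference is that you spell out the linear lower bound $1+\tilde{S}_0\at{\tilde{x}}\geq c\at{1+\tilde{x}}$ and the elementary inequality $1-\sqrt{1-u}\leq u$, where the paper appeals to a brief phase-plane analysis.
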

\begin{proof}
The planar and autonomous Hamiltonian
ODE \eref{Eqn:LimitIVP}$_1$  admits the conserved quantity
\begin{equation*}
E_{\rm tot}\at{\tilde{x}}:=\mbox{$\frac12$}\Bat{\tilde{S}_0^\prime\at{\tilde{x}}}^2+
E_{\rm pot}\at{\tilde{x}}\,,\qquad E_{\rm pot}\at{\tilde{x}}:=
\frac{\mbox{$\frac12$}\ol{\mu}^2}{\at{1+\tilde{S}_0\at{\tilde{x}}}^{m}}
\end{equation*}
with value $E_{\rm tot}\at{\tilde{x}}=E_{\rm tot}\at{0}=\frac12\ol{\mu}^2$ for all $\tilde{x}$. A simple phase plane analysis reveals that $\tilde{S}$ is even and that
both $\tilde{S}_0$ and $\tilde{S}_0^\prime$ are strictly increasing for $\tilde{x}>0$,
see Figure \ref{Fig.CartoonLimit} for an illustration. In particular, we have
\begin{equation*}
\tilde{S}_0^\prime\at{\tilde{x}}\quad\xrightarrow{\tilde{x}\to\infty}\quad  \sqrt{2 E_{\rm tot}\at{0}}=\ol{\mu}>0
\end{equation*}
so the conservation law implies
\begin{eqnarray*}
\abs{\tilde{S}_0^\prime\at{\tilde{x}}-\sqrt{2 E_{\rm tot}\at{0}}}&=&\abs{\sqrt{2E_{\rm tot}\at{0}-2E_{\rm pot}\at{\tilde{x}}}-\sqrt{2 E_{\rm tot}\at{0}}}\\&\leq& C\,E_{\rm pot}\at{\tilde{x}}\leq \frac{C}{\at{1+\tilde{x}}^m}\,,
\end{eqnarray*}
and hence \eref{Lem:TS.LimitProfile.Eqn2a}. Moreover, the even function  $\tilde{K}_0$ with $\tilde{K}_0\at{\tilde{x}}:=\tilde{x}\tilde{S}_0^\prime\at{\tilde{x}}-
\tilde{S}_0\at{\tilde{x}}$ satisfies
\begin{equation*}
0\leq\tilde{K}_0^\prime\at{\tilde{x}}=\tilde{x}\tilde{S}_0^{\prime\prime}\at{\tilde{x}}\leq \frac{C}{\at{1+\tilde{x}}^m}\qquad \mbox{for all}\quad \tilde{x}>0\,,
\end{equation*}
so  $\tilde{K}_0^\prime$ is integrable due to $m>1$. The constant $\ol{\ka}=\lim_{\tilde{x}\to\infty}\tilde{K}_0\at{\tilde{x}}$ is therefore
well-defined and \eref{Lem:TS.LimitProfile.Eqn2b} follows immediately from the estimate for $\tilde{K}_0^\prime\at{\tilde{x}}$. Finally, $\ol{\eta}$ is well-defined since
the integrand is continuous and decays as $\tilde{x}^{-m}$ for  $\tilde{x}\to\infty$.
\end{proof}
\begin{figure}[t!]
\centering{%
\includegraphics[width=0.3\textwidth]{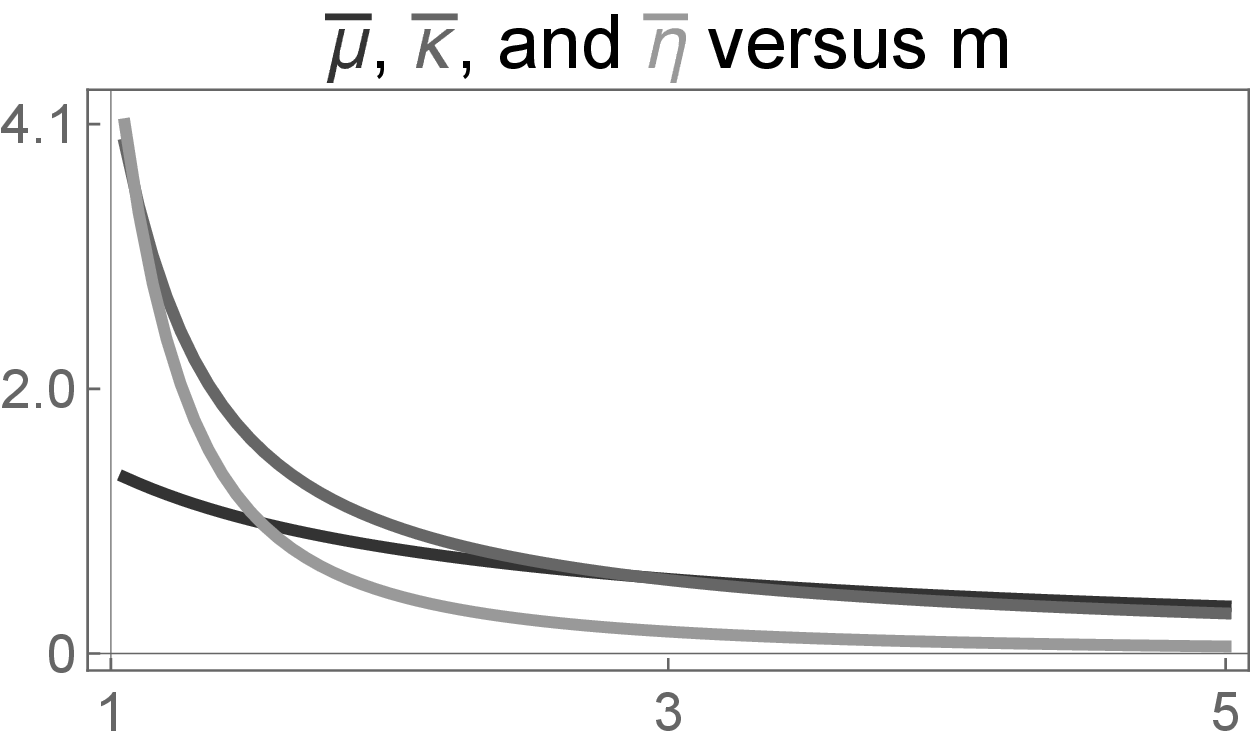}
}%
\caption{Numerical values for the constants $\ol{\mu}$, $\ol{\ka}$, and $\ol{\eta}$ from \eref{Lem:TS.LimitProfile.Consts}, which provide the leading and the next-to-leading order terms in the scaling relations between $\eps_\delta$, $\mu_\delta$, and $\si_\delta$, see Corollary \eref{Cor:ScalingLaws}.
} %
\label{Fig.Coefficients}%
\end{figure}
There seems to be no simple way to compute the constants $\ol{\ka}$ and $\ol{\eta}$ as functions of $m$
but numerical values are presented in Figure \ref{Fig.Coefficients}.
%
%
%
\subsection{Asymptotic formulas \texorpdfstring{for $\tilde{S}_\delta$}{}}
\label{sect:tipscaling.2}
%
%
%
We now able to formulate and prove our main asymptotic result.
\begin{theorem}[asymptotics of $\tilde{S}_\delta$]
\label{Thm:TS.Asymptotics}
Any function $\tilde{S}_\delta$ is strictly increasing and convex on $I_\delta$. Moreover, the estimates
\begin{equation}
\label{Thm:TS.Asymptotics.Eqn1}
\sup\limits_{\tilde{x}\in{J_\delta}} \abs{\tilde{S}_\delta\at{\tilde{x}}-
\tilde{S}_0\at{\tilde{x}}}\leq C\eps_\delta^{m-1}
\end{equation}
and
\begin{equation}
\label{Thm:TS.Asymptotics.Eqn2}
\sup\limits_{\tilde{x}\in{J_\delta}} \abs{\tilde{S}_\delta^\prime\at{\tilde{x}}-
\tilde{S}_0^\prime\at{\tilde{x}}}\leq C\eps_\delta^m\,,\qquad
\sup\limits_{\tilde{x}\in{J_\delta}} \abs{\tilde{S}_\delta^{\prime\prime}\at{\tilde{x}}-
\tilde{S}_0^{\prime\prime}\at{\tilde{x}}}\leq C\eps_\delta^{m+1}
\end{equation}
hold for all $0<\delta<1$ and a constant $C$ independent of $\delta$.
\end{theorem}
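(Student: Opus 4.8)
The plan is to regard the identity \eref{Eqn:TipS.Id1} as a small perturbation of the limit equation \eref{Eqn:LimitIVP} and to control the difference $w_\delta:=\tilde{S}_\delta-\tilde{S}_0$ on $I_\delta$; all assertions then extend to $J_\delta$ by evenness. Writing $g\at{s}:=\frac{2}{m+1}\at{1+s}^{-\at{m+1}}$ for the right-hand side of \eref{Eqn:LimitIVP}, I would first insert the explicit form of $\Psi$ into \eref{Eqn:TipS.Id2}. For the potential \eref{Eqn:Pot} one computes $\Psi\at{s}=\frac{1}{m+1}\at{1-s^{m+1}}$, so the factor $\at{1+\tilde{S}_\delta}^{m+1}$ cancels exactly and
\begin{equation*}
2\tilde{F}_\delta\at{\tilde{x}}=g\bat{\tilde{S}_\delta\at{\tilde{x}}}-\frac{2}{m+1}\eps_\delta^{m+1}.
\end{equation*}
Together with \eref{Eqn:TipS.Id1} and the bound \eref{Eqn:TS.GEstimates} for $\tilde{G}_\delta$ this produces the forced equation $\tilde{S}_\delta''=g\at{\tilde{S}_\delta}+E_\delta$ with $\tilde{S}_\delta\at{0}=\tilde{S}_\delta'\at{0}=0$ and the uniform forcing estimate $\abs{E_\delta\at{\tilde{x}}}\leq C\eps_\delta^{m+1}$ on $J_\delta$. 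This exact cancellation is what generates the high power $\eps_\delta^{m+1}$ and is the quantitative source of all three error orders.

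The monotonicity and convexity statement I would prove first and directly, since it needs no comparison with $\tilde{S}_0$. Using \eref{Eqn:TipS.Def2}, \eref{Eqn:TipS.Def3} and the evenness of $R_\delta$, the identity \eref{Eqn:TipS.Id1} reads
\begin{equation*}
\eps_\delta^{-\at{m+1}}\tilde{S}_\delta''\at{\tilde{x}}=2\Phi'\Bat{R_\delta\at{\mu_\delta\tilde{x}}}-\Phi'\Bat{R_\delta\at{1-\mu_\delta\tilde{x}}}-\Phi'\Bat{R_\delta\at{1+\mu_\delta\tilde{x}}}.
\end{equation*}
For $\tilde{x}\in I_\delta$ the three arguments satisfy $\mu_\delta\tilde{x}\leq 1-\mu_\delta\tilde{x}\leq 1+\mu_\delta\tilde{x}$, so the unimodality of $R_\delta$ and the monotonicity of $\Phi'$ give $\Phi'\at{R_\delta\at{\mu_\delta\tilde{x}}}\geq\Phi'\at{R_\delta\at{1\mp\mu_\delta\tilde{x}}}$; hence the right-hand side is a sum of two nonnegative terms and $\tilde{S}_\delta''\geq0$. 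Since it is strictly positive near $\tilde{x}=0$ and $\tilde{S}_\delta'\at{0}=0$, we conclude $\tilde{S}_\delta'>0$ on the interior of $I_\delta$, so $\tilde{S}_\delta$ is strictly increasing and convex there.

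For the quantitative estimates I would exploit the Hamiltonian structure already used in Lemma \ref{Lem:TS.LimitProfile}. With $G'=g$ the energy $H_\delta:=\frac12\at{\tilde{S}_\delta'}^2-G\at{\tilde{S}_\delta}$ obeys $H_\delta'=\tilde{S}_\delta'E_\delta$, while $H_0$ is constant; hence the drift $r_\delta:=H_\delta-H_0=\int_0^{\tilde{x}}\tilde{S}_\delta'E_\delta$ satisfies $\abs{r_\delta\at{\tilde{x}}}\leq C\eps_\delta^{m+1}\tilde{S}_\delta\at{\tilde{x}}$ because $\tilde{S}_\delta'\geq0$. Rewriting this as $\frac12 w_\delta'\,\at{\tilde{S}_\delta'+\tilde{S}_0'}-g\at{\zeta}\,w_\delta=r_\delta$ for an intermediate value $\zeta$, I would estimate $w_\delta$ by a bootstrap. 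On the fixed interval $\ccinterval{0}{1}$ the coefficient $g'$ is bounded and a standard second-order Gronwall argument starting from $w_\delta\at{0}=w_\delta'\at{0}=0$ gives $\abs{w_\delta\at{\tilde{x}}}\leq C\eps_\delta^{m+1}\tilde{x}^2$ and $\abs{w_\delta'\at{\tilde{x}}}\leq C\eps_\delta^{m+1}\tilde{x}$. For $\tilde{x}\geq1$ the denominator $\tilde{S}_\delta'+\tilde{S}_0'$ is bounded below by the $\delta$-independent constant $\tilde{S}_0'\at{1}>0$, and the linear growth of $\tilde{S}_0$ (Lemma \ref{Lem:TS.LimitProfile}) forces $\zeta\geq c\tilde{x}$ once $w_\delta$ is known to be uniformly small, so that $g\at{\zeta}\leq C\at{1+\tilde{x}}^{-\at{m+1}}$; solving for $w_\delta'$ and inserting the inductive bound $\abs{w_\delta}\leq C\eps_\delta^{m+1}\at{1+\tilde{x}}^2$ propagates $\abs{w_\delta'}\leq C\eps_\delta^{m+1}\at{1+\tilde{x}}$, the decisive gain $\at{1+\tilde{x}}^{1-m}\leq1$ being exactly where $m>1$ enters. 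Integrating once more closes the bootstrap. Evaluating these polynomial-in-$\tilde{x}$ bounds at the endpoint, where $\tilde{x}\leq\frac{1}{2\mu_\delta}\leq C\eps_\delta^{-1}$ by \eref{Lem:SimpleConv.Eqn2}, turns $\eps_\delta^{m+1}\at{1+\tilde{x}}^2$ into $\eps_\delta^{m-1}$ and $\eps_\delta^{m+1}\at{1+\tilde{x}}$ into $\eps_\delta^{m}$, which are \eref{Thm:TS.Asymptotics.Eqn1} and \eref{Thm:TS.Asymptotics.Eqn2}$_1$. Finally the second-derivative estimate follows by feeding these bounds back into $w_\delta''=g'\at{\zeta}w_\delta+E_\delta$ and using $\abs{g'\at{\zeta}}\leq C\at{1+\tilde{x}}^{-\at{m+2}}$, which beats the quadratic growth of $w_\delta$ and leaves the uniform bound $C\eps_\delta^{m+1}$ of \eref{Thm:TS.Asymptotics.Eqn2}$_2$.

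The main obstacle is that $I_\delta$ has length of order $\eps_\delta^{-1}$, so a naive Gronwall estimate for $w_\delta$ would accumulate an exponential factor $\mhexp{c/\eps_\delta}$ and destroy every bound. The whole point is therefore to use that the linearized coefficient $g'\at{\zeta}\sim-\at{1+\tilde{x}}^{-\at{m+2}}$ decays — a consequence of the linear growth of $\tilde{S}_0$ and of $m>1$ — so that the effective interaction is integrable and the perturbation stays controlled over the entire long interval. The complementary difficulty is the turning point at $\tilde{x}=0$, where $\tilde{S}_\delta'+\tilde{S}_0'$ degenerates and the first-integral representation cannot be divided; this is precisely why I would dispatch $\ccinterval{0}{1}$ separately by the direct Gronwall bootstrap before switching to the energy argument on $\cointerval{1}{\infty}\cap I_\delta$.
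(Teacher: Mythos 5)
Your proposal is correct in substance and reaches all three estimates, but its quantitative core runs along a genuinely different track from the paper's. The shared part: both derive the forced equation $\tilde{S}_\delta^{\prime\prime}=g\at{\tilde{S}_\delta}+E_\delta$ with $\abs{E_\delta}\leq C\eps_\delta^{m+1}$ (the paper from the smoothness of $\Psi$, namely $\Psi\at{0}-\Psi\at{s}=s^{m+1}/\at{m+1}$, you from the exact identity $\Psi\at{s}=\at{1-s^{m+1}}/\at{m+1}$ --- the same computation), and both obtain convexity and monotonicity of $\tilde{S}_\delta$ from unimodality (the paper reads it off the unimodality of $V_\delta$, you from the unimodality of $R_\delta$ combined with the monotonicity of $\Phi^\prime$ in the three-point identity; both are valid). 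The divergence is in the error estimates for $w_\delta:=\tilde{S}_\delta-\tilde{S}_0$: the paper subtracts the two ODEs, applies the Mean Value Theorem to get $\abs{w_\delta^{\prime\prime}}\leq C\at{1+\tilde{x}}^{-m-2}\abs{w_\delta}+C\eps_\delta^{m+1}$, integrates twice and closes with a Gronwall inequality whose kernel $\at{1+\tilde{z}}^{-m-1}$ is integrable --- available precisely because the convexity-based lower bound $1+\tilde{S}_\delta\geq c\at{1+\tilde{x}}$ holds and $m>1$ --- so the whole interval $I_\delta$ is treated at once, with no turning-point issue and no bootstrap. You instead track the drift of the Hamiltonian first integral and divide the factorized identity by $\tilde{S}_\delta^\prime+\tilde{S}_0^\prime$, which forces the split at $\tilde{x}=1$ and a continuity/bootstrap argument; this works and yields the same polynomial bounds $\eps_\delta^{m+1}\tilde{x}^2$, $\eps_\delta^{m+1}\tilde{x}$, $\eps_\delta^{m+1}$ evaluated at $\tilde{x}\leq1/\at{2\mu_\delta}\leq C\eps_\delta^{-1}$, and it has the structural merit of making the $\DO{\eps_\delta^{m+1}}$ energy drift explicit. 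Two caveats. First, your stated obstacle --- that Gronwall over the long interval would accumulate a factor $\mhexp{c/\eps_\delta}$ --- applies only to a non-decaying coefficient; with the decay of $g^\prime$ that both arguments rely on, Gronwall is harmless, and that is exactly the paper's (shorter) route. Second, your bootstrap needs the a priori linear bound $\tilde{S}_\delta\at{\tilde{x}}\leq C\at{1+\tilde{x}}$ before it starts, since it enters both $\abs{r_\delta}\leq C\eps_\delta^{m+1}\at{1+\tilde{x}}$ and the lower bound $\zeta\geq c\tilde{x}$; to remove any circularity, observe that the drift identity gives $\bat{\tilde{S}_\delta^\prime}^2\leq C+C\eps_\delta^{m+1}\tilde{S}_\delta$, which together with convexity (so that $\tilde{S}_\delta\at{\tilde{x}}\leq\tilde{x}\,\tilde{S}_\delta^\prime\at{\tilde{x}}$) yields $\tilde{S}_\delta^\prime\leq C$ on $I_\delta$ and hence the linear bound, independently of the bootstrap.
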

\begin{proof} Since $\tilde{S}_\delta$ and $\tilde{S}_0$ are even functions it suffices to consider
$\tilde{x}\in I_\delta$.
\par \ul{\emph{Dynamics of $\tilde{S}_\delta$}}: %
By \eref{Eqn:TipS.Id2} and due to
$\Psi\at{0}-\Psi\at{s}=Cs^{m+1}$ we have
\begin{equation*}
\abs{\tilde{F}_\delta\at{\tilde{x}}-
\frac{\Psi\at{0}}{\at{1+\tilde{S}_\delta\at{\tilde{x}}}^{m+1}}}\leq C\eps_\delta^{m+1}\,,
\end{equation*}
and in view of \eref{Eqn:TipS.Id2} and \eref{Eqn:TS.GEstimates} we conclude that $\tilde{S}_\delta$ satisfies on the interval $I_\delta$ the ODE
\begin{equation}
\label{Thm:TS.Asymptotics.PEqn0}
\tilde{S}_\delta^{\prime\prime}\at{\tilde{x}}=\frac{\Psi\at{0}}{\at{1+\tilde{S}_\delta\at{\tilde{x}}}^{m+1}}+\eps_\delta^{m+1} h_\delta\at{\tilde{x}}\qquad \mbox{with}\quad \abs{h_\delta\at{\tilde{x}}}\leq C\,.
\end{equation}
Standard ODE arguments now imply
\begin{equation}
\label{Thm:TS.Asymptotics.PEqn1}
\sup_{0\leq\tilde{x}\leq\tilde{x}_*}\at{\abs{\tilde{S}_\delta\at{\tilde{x}}-
\tilde{S}_0\at{\tilde{x}}}+\abs{\tilde{S}_\delta^\prime\at{\tilde{x}}-
\tilde{S}_0^\prime\at{\tilde{x}}}}\leq C\eps_\delta^{m+1}
\end{equation}
for any fixed $\tilde{x}_*>0$, where $C$ depends on $\tilde{x}_*$.
\par
\ul{\emph{Properties of $\tilde{S}_\delta$}}: %
The monotonicity of both $\tilde{S}_\delta$ and $\tilde{S}_\delta^\prime$ on $I_\delta$ follows directly from the unimodality of $R_\delta$, $V_\delta$ and the definition \eref{Eqn:TipS.Def1}. In particular, $\tilde{S}_\delta$ is convex on $I_\delta$. We therefore  have
\begin{equation*}
\tilde{S}_\delta\at{\tilde{x}}\geq \tilde{S}_\delta\at{\tilde{x}_*}+\tilde{S}_\delta\at{\tilde{x}_*}^\prime\,\at{\tilde{x}-\tilde{x}_*}\,,
\end{equation*}
and choosing $\tilde{x}_*>0$ sufficiently close to $0$ we find a constant $c$ such that
\begin{equation}
\label{Thm:TS.Asymptotics.PEqn2}
1+\tilde{S}_\delta\at{x}\geq c\at{1+\tilde{x}}
\end{equation}
holds for all $\tilde{x}\in I_\delta$, where we used that $\lim_{\delta\to0}\tilde{S}_\delta^\prime\at{\tilde{x}_*}=\tilde{S}_0^\prime\at{\tilde{x}_*}>0$ is implied by \eref{Thm:TS.Asymptotics.PEqn1}. Notice that
\eref{Thm:TS.Asymptotics.PEqn2} holds also for $\delta=0$.
\par
\ul{\emph{Estimates for $\tilde{S}_\delta$}}: By \eref{Eqn:LimitIVP} and \eref{Thm:TS.Asymptotics.PEqn0} -- and using both
the Mean Value Theorem as well as \eref{Thm:TS.Asymptotics.PEqn2} -- we obtain
\begin{equation}
\label{Thm:TS.Asymptotics.PEqn3}
\abs{\tilde{S}_\delta^{\prime\prime}\at{\tilde{x}}
-\tilde{S}_0^{\prime\prime}\at{\tilde{x}}}\leq \frac{C}{\at{1+\tilde{x}}^{m+2}}\abs{\tilde{S}_\delta\at{\tilde{x}}
-\tilde{S}_0\at{\tilde{x}}}+C\eps_\delta^{m+1}\,.
\end{equation}
Integration with respect to $\tilde{x}$ yields
\begin{equation*}
\abs{\tilde{S}_\delta^{\prime}\at{\tilde{x}}
-\tilde{S}_0^{\prime}\at{\tilde{x}}}\leq
C\eps_\delta^{m+1}\tilde{x}+
\int_{0}^{\tilde{x}}\frac{C}{\at{1+\tilde{y}}^{m+2}}\int_0^{\tilde{y}}\abs{\tilde{S}_\delta^\prime\at{\tilde{z}}
-\tilde{S}_0^\prime\at{\tilde{z}}}\dint{\tilde{z}}\dint{\tilde{y}}
\end{equation*}
since \eref{Eqn:TipS.Id0} ensures that
\begin{equation*}
\tilde{S}_\delta\at{0}=\tilde{S}_0\at{0}=0\,,\qquad
\tilde{S}_\delta^\prime\at{0}=\tilde{S}_0^\prime\at{0}=0\,,
\end{equation*}
and a direct computation reveals
\begin{equation*}
\abs{\tilde{S}_\delta^{\prime}\at{\tilde{x}}
-\tilde{S}_0^{\prime}\at{\tilde{x}}}\leq
C\eps_\delta^{m+1}\tilde{x}+
\int_{0}^{\tilde{x}}\frac{C}{\at{1+\tilde{z}}^{m+1}}\abs{\tilde{S}_\delta^\prime\at{\tilde{z}}
-\tilde{S}_0^\prime\at{\tilde{z}}}\dint{\tilde{z}}\,.
\end{equation*}
Employing the Gronwall Lemma for $\tilde{x}\geq 0$ we obtain
\begin{equation}\label{eq:GRON}
\abs{\tilde{S}_\delta^{\prime}\at{\tilde{x}}
-\tilde{S}_0^{\prime}\at{\tilde{x}}}\leq
C\eps_\delta^{m+1} \tilde{x} \exp\at{\int_{0}^{\tilde{x}}\frac{C}{\at{1+\tilde{z}}^{m+1}}\dint{\tilde{z}}}\leq
C\eps_\delta^{m+1} \tilde{x} \,,
\end{equation}
and using $\tilde{x}\leq 1/\at{2\tilde\mu_\delta}$ as well as
the lower bound for $\mu_\delta$ from Lemma \ref{Lem:SimpleConv} we arrive at \eref{Thm:TS.Asymptotics.Eqn2}$_1$. Moreover,
integrating \eref{eq:GRON} with respect to $\tilde{x}$ gives
\begin{equation*}
\abs{\tilde{S}_\delta\at{\tilde{x}}
-\tilde{S}_0\at{\tilde{x}}}\leq  C \eps_\delta^{m+1} \tilde{x}^2
\end{equation*}
and hence \eref{Thm:TS.Asymptotics.Eqn1}. In combination with
\eref{Thm:TS.Asymptotics.PEqn3} we further get
\begin{equation*}
\abs{\tilde{S}_\delta^{\prime\prime}\at{\tilde{x}}
-\tilde{S}_0^{\prime\prime}\at{\tilde{x}}}\leq C\eps_\delta^{m+1}\at{
 \frac{\tilde{x}^2}{\at{1+\tilde{x}}^{m+2}}+1}\,,
\end{equation*}
which in turn provides \eref{Thm:TS.Asymptotics.Eqn2}$_2$.
\end{proof}
A first consequence of Theorem \ref{Thm:TS.Asymptotics} are leading order expressions for $\mu_\delta$ and hence for $\si_\delta$; below we improve this result by specifying the next-to-leading order corrections
in Corollary \ref{Cor:ScalingLaws}.
\begin{corollary}[convergence of $\mu_\delta$ and $\si_\delta$]
\label{Cor:TS.ConvPrms}
\quad
\begin{equation}
\label{Cor:TS.ConvPrms.Eqn1}
 \frac{\mu_\delta}{\eps_\delta} \quad\xrightarrow{\delta\to0}\quad \ol{\mu},\qquad\qquad
\si_\delta \eps_\delta^{m}\quad\xrightarrow{\delta\to0}\quad
 \ol\mu^2\,.
\end{equation}
\end{corollary}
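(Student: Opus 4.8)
The plan is to derive both limits in \eref{Cor:TS.ConvPrms.Eqn1} from a single evaluation of the rescaled tip profile $\tilde{S}_\delta$ at the right endpoint $\tilde{x}=1/\at{2\mu_\delta}$ of $I_\delta$, i.e. at the point where the argument $\mu_\delta\tilde{x}$ of $R_\delta$ equals $\frac12$ and the crude localization bounds of Lemma \ref{Lem:SimpleConv} become usable. Since the second assertion is, via the definition \eref{Eqn:DefEpsMu}, just the algebraic identity $\si_\delta\eps_\delta^m=\at{\mu_\delta/\eps_\delta}^2$, everything reduces to proving $\mu_\delta/\eps_\delta\to\ol{\mu}$.

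First I would compute $\tilde{S}_\delta\bat{1/\at{2\mu_\delta}}$ directly from the definition \eref{Eqn:TipS.Def1}: it equals $\bat{1-\eps_\delta-R_\delta\at{\frac12}}/\eps_\delta$, and the estimate $\norm{R_\delta-A\chi}_\infty\leq C\eps_\delta$ together with $\bat{A\chi}\at{\frac12}=\frac12$ gives $R_\delta\at{\frac12}=\frac12+\DO{\eps_\delta}$, hence
\[
\tilde{S}_\delta\Bat{\frac{1}{2\mu_\delta}}=\frac{1}{2\eps_\delta}+\DO{1}.
\]
Next I would estimate the same value through the limit profile. Theorem \ref{Thm:TS.Asymptotics} controls $\abs{\tilde{S}_\delta-\tilde{S}_0}$ on $J_\delta$ by $C\eps_\delta^{m-1}$, while the two bounds \eref{Lem:TS.LimitProfile.Eqn2a} and \eref{Lem:TS.LimitProfile.Eqn2b} of Lemma \ref{Lem:TS.LimitProfile} combine—this is where $m>1$ first enters—into the linear-growth expansion $\tilde{S}_0\at{\tilde{x}}=\ol{\mu}\,\tilde{x}-\ol{\ka}+\DO{\tilde{x}^{1-m}}$ as $\tilde{x}\to\infty$. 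Evaluating at $\tilde{x}=1/\at{2\mu_\delta}$ then yields
\[
\tilde{S}_\delta\Bat{\frac{1}{2\mu_\delta}}=\frac{\ol{\mu}}{2\mu_\delta}-\ol{\ka}+\DO{\mu_\delta^{m-1}}+\DO{\eps_\delta^{m-1}}.
\]

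Equating the two displays, multiplying by $2\eps_\delta$, and sending $\delta\to0$, every correction collapses because $m>1$ and $\eps_\delta,\mu_\delta\to0$; what survives is $\ol{\mu}\,\eps_\delta/\mu_\delta\to1$, which is the first limit in \eref{Cor:TS.ConvPrms.Eqn1}. The second then follows at once from $\si_\delta\eps_\delta^m=\at{\mu_\delta/\eps_\delta}^2$. Note that the function value, rather than $\tilde{S}_\delta^\prime$, is the convenient quantity here, since the endpoint amplitude $R_\delta\at{\frac12}$ is directly controlled by the $\fspaceL^\infty$-localization whereas the corresponding slope is not.

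The hard part will be purely the bookkeeping of the remainders: one must verify that after multiplication by $\eps_\delta$ the terms $\DO{\eps_\delta^{m-1}}$ (profile error) and $\DO{\mu_\delta^{m-1}}$ (tail of the linear expansion, with $\mu_\delta\sim\eps_\delta$ by Lemma \ref{Lem:SimpleConv}) are genuinely $\Do{1}$ against the leading $1/\at{2\eps_\delta}$, which once more forces the use of $m>1$. I expect no further subtlety, because the linear asymptotics of $\tilde{S}_0$ pin down the slope $\ol{\mu}$ from this single endpoint once the additive constant $\ol{\ka}$ has been absorbed into the lower-order error.
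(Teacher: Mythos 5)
Your proposal is correct and follows essentially the same route as the paper's proof: both evaluate $\tilde{S}_\delta$ at the endpoint $\tilde{x}=1/\at{2\mu_\delta}$, use the definitional identity $\eps_\delta\tilde{S}_\delta\bat{1/\at{2\mu_\delta}}=1-\eps_\delta-R_\delta\at{\mbox{$\frac12$}}$ together with Lemma \ref{Lem:SimpleConv} on one side, transfer to $\tilde{S}_0$ via Theorem \ref{Thm:TS.Asymptotics}, invoke the linear-growth asymptotics \eref{Lem:TS.LimitProfile.Eqn2a}--\eref{Lem:TS.LimitProfile.Eqn2b} on the other side, and then deduce the $\si_\delta$ limit from the identity $\si_\delta\eps_\delta^m=\at{\mu_\delta/\eps_\delta}^2$ implied by \eref{Eqn:DefEpsMu}. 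The only difference is cosmetic: you carry the constant $\ol{\ka}$ and explicit $\DO{\cdot}$ remainders where the paper simply passes to limits of the two products $2\eps_\delta\tilde{S}_0$ and $2\mu_\delta\tilde{S}_0$.
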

\begin{proof}
By construction -- see \eref{Eqn:TipS.Def1} -- and  Lemma \ref{Lem:SimpleConv} we have
\begin{equation*}
\eps_\delta\,\tilde{S}_\delta\at{\frac{1}{2\mu_\delta}}=
1-\eps_\delta - R_\delta\at{\mbox{$\frac12$}}\quad
\xrightarrow{\;\;\delta\to{0}\;\;}\quad\mbox{$\frac12$}\,.
\end{equation*}
and in view of Theorem \ref{Thm:TS.Asymptotics} we conclude that
\begin{equation*}
2\, \eps_\delta\,\tilde{S}_0\at{\frac{1}{2\mu_\delta}}\quad
\xrightarrow{\;\;\delta\to{0}\;\;}\quad 1\,.
\end{equation*}
On the other hand, the estimates \eref{Lem:TS.LimitProfile.Eqn2a} and \eref{Lem:TS.LimitProfile.Eqn2b} evaluated at $\tilde{x}=1/\at{2\mu_\delta}$ imply
\begin{equation*}
2\,\mu_\delta\, \tilde{S}_0\at{\frac{1}{2\mu_\delta}}\quad
\xrightarrow{\;\;\delta\to{0}\;\;} \quad \ol\mu\,,
\end{equation*}
so the claim for $\mu_\delta$ follows immediately. The convergence result for $\si_\delta$ is a consequence of \eref{Eqn:DefEpsMu}.
\end{proof}
%
%
\section{Further asymptotic formulas}\label{sect:further}
%
In this section we exploit the asymptotic results on the tip scaling and
derive approximation formulas for the foot and transition scaling,
the tails of the profile functions, and for the scaling relations between $\delta$, $\eps_\delta$, and $\si_\delta$. In this way we obtain a complete set of asymptotic formulas which finally allows us to extract all relevant information
on the limit $\delta\to0$ from the function $\tilde{S}_0$ only. We also sketch a possible application of these formulas, namely the study of the linearized eigenvalue problem.
%
\subsection{Transition scaling \texorpdfstring{of $V_\delta$}{}}
%
%
To describe the jump-like behavior of $V_\delta$ near $x=\pm\frac12$ we
introduce the rescaled profiles $\tilde{W}_\delta$ by
\begin{equation}
\label{Eqn:TrS.Def1}
\tilde{W}_\delta\at{\tilde{x}}:=\frac{\mu_\delta}{\eps_\delta}
V_\delta\at{-\mbox{$\frac12$}+\mu_\delta\tilde{x}}\,
\end{equation}
and refer to Figure \ref{Fig.Convergence.TnFScaling} for an illustration. The key observation for the asymptotics of $\tilde{W}_\delta$ is
the approximation
\begin{equation*}
\tilde{W}_\delta^\prime\at{\tilde{x}}\approx \tilde{F}_\delta\at{\tilde{x}}\approx
\mbox{$\frac12$} \tilde{S}_\delta^{\prime\prime}\at{\tilde{x}}\,,
\end{equation*}
and hence we are able to prove the convergence of $\tilde{W}_\delta$ to
\begin{equation}
\label{Eqn:TrS.Lim}
\tilde{W}_0\at{\tilde{x}}:=\mbox{$\frac12$}\at{\tilde{S}_0^\prime\at{\tilde{x}}+\ol{\mu}}\,,
\end{equation}
where $\tilde{S}_0$ is the ODE solution from the tip scaling and defined in \eref{Eqn:LimitIVP}.
\begin{figure}[t!]
\centering{%
\includegraphics[width=0.95\textwidth]{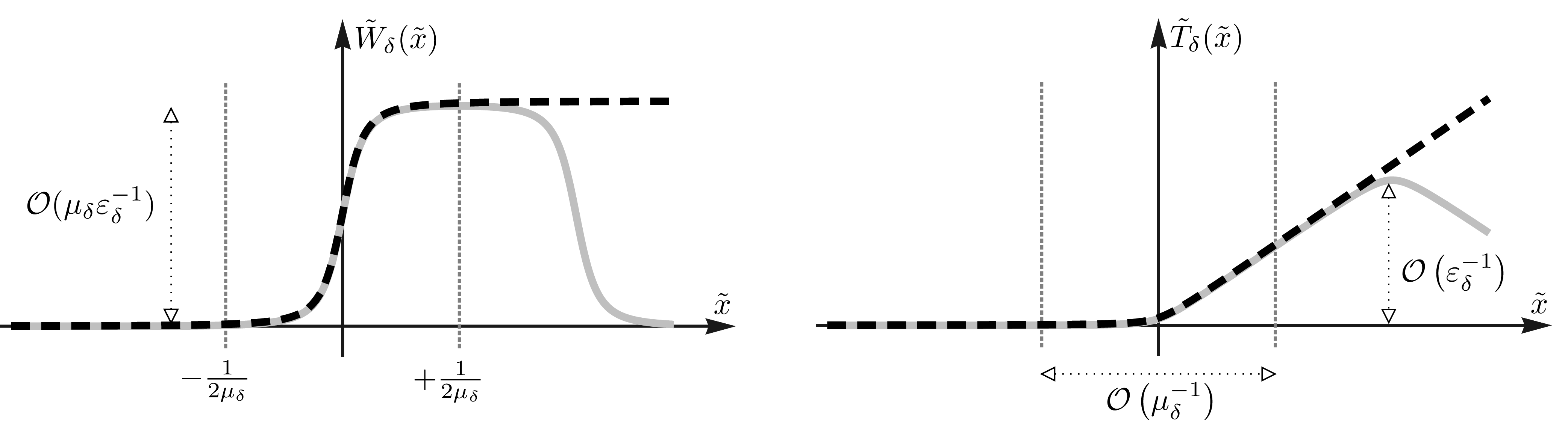}
}%
\caption{Cartoon of the tip and the foot scaling: The functions $\tilde{W}_\delta$ and $\tilde{T}_\delta$ for $\delta>0$ (gray, solid) and $\delta=0$ (black, dashed).
The limits $\tilde{W}_0$ and $\tilde{T}_0$ can be computed from $\tilde{S}_0$, see \eref{Eqn:TrS.Lim} and \eref{Eqn:FoS.Lim}.} %
\label{Fig.Convergence.TnFScaling}%
\end{figure}
\begin{theorem}[convergence under the transition scaling]
\label{Thm:TrS.MR}
We have
\begin{equation}
\label{Thm:TrS.MR.Eqn1}
\sup_{x\in{J_\delta}}
\babs{\tilde{W}_\delta\at{\tilde{x}}-\tilde{W}_0\at{\tilde{x}}}\leq C\eps_\delta^m
\,,\qquad %
\sup_{x\in{J_\delta}}\babs{\tilde{W}_\delta^\prime\at{\tilde{x}}-\tilde{W}_0^\prime\at{\tilde{x}}}\leq C\eps_\delta^{m+1}
\end{equation}
for some constant $C$ independent of $\delta$.
\end{theorem}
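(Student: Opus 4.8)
The plan is to reduce the convergence of $\tilde{W}_\delta$ to the already-established convergence of $\tilde{S}_\delta$ from Theorem~\ref{Thm:TS.Asymptotics}, by deriving an ODE for $\tilde{W}_\delta$ that mirrors the structure of the tip-scaling equation. First I would compute $\tilde{W}_\delta^\prime$ directly from the definition \eref{Eqn:TrS.Def1}: differentiating gives $\tilde{W}_\delta^\prime\at{\tilde{x}} = \frac{\mu_\delta^2}{\eps_\delta} V_\delta^\prime\at{-\frac12+\mu_\delta\tilde{x}}$, and invoking \eref{Eqn:TW.Diff}$_2$ together with $\mu_\delta^2 = \si_\delta\eps_\delta^{m+2}$ from \eref{Eqn:DefEpsMu} converts the factor $\mu_\delta^2/\eps_\delta$ times $V_\delta^\prime$ into a difference of rescaled $\Phi^\prime$ terms. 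The point is that the advance-delay structure makes $\tilde{W}_\delta^\prime$ equal to $\tilde{F}_\delta\at{\tilde{x}} - \tilde{G}_\delta\at{\tilde{x}}$ (or an analogous combination), where $\tilde{F}_\delta$ and $\tilde{G}_\delta$ are exactly the auxiliary functions \eref{Eqn:TipS.Def2}--\eref{Eqn:TipS.Def3} already analyzed in the tip scaling. Since the $\tilde{G}_\delta$ contribution is controlled by $C\eps_\delta^{m+1}$ via \eref{Eqn:TS.GEstimates}, this identifies $\tilde{W}_\delta^\prime \approx \tilde{F}_\delta \approx \frac12\tilde{S}_\delta^{\prime\prime}$ up to errors of order $\eps_\delta^{m+1}$, matching the heuristic stated before the theorem.

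Next I would establish the derivative estimate \eref{Thm:TrS.MR.Eqn1}$_2$. From the relation $\tilde{W}_\delta^\prime = \tilde{F}_\delta - \tilde{G}_\delta$ and the definition $\tilde{W}_0^\prime = \frac12\tilde{S}_0^{\prime\prime}$ (obtained by differentiating \eref{Eqn:TrS.Lim}), the difference $\tilde{W}_\delta^\prime - \tilde{W}_0^\prime$ splits into three pieces: the gap between $\tilde{F}_\delta$ and $\frac12\tilde{S}_\delta^{\prime\prime}$, the gap between $\frac12\tilde{S}_\delta^{\prime\prime}$ and $\frac12\tilde{S}_0^{\prime\prime}$, and the $\tilde{G}_\delta$ term. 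The first piece is $O\at{\eps_\delta^{m+1}}$ by \eref{Eqn:TipS.Id1} and \eref{Eqn:TS.GEstimates}, the second is exactly $\frac12$ times the second-derivative bound \eref{Thm:TS.Asymptotics.Eqn2}$_2$ giving $O\at{\eps_\delta^{m+1}}$, and the third is $O\at{\eps_\delta^{m+1}}$ again by \eref{Eqn:TS.GEstimates}; summing yields \eref{Thm:TrS.MR.Eqn1}$_2$.

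For the uniform estimate \eref{Thm:TrS.MR.Eqn1}$_1$ I would integrate the derivative bound, but here one must be careful about the constant of integration, since integrating an $O\at{\eps_\delta^{m+1}}$ derivative bound over an interval of length $1/\at{2\mu_\delta}\sim 1/\eps_\delta$ would only give $O\at{\eps_\delta^m}$ after fixing the base point. The natural base point is $\tilde{x}=0$: by evenness-type symmetry and the definition of $\tilde{W}_0$ I expect $\tilde{W}_\delta\at{0} - \tilde{W}_0\at{0}$ to be small, which requires showing $\tilde{W}_\delta\at{0} = \frac{\mu_\delta}{\eps_\delta}V_\delta\at{-\frac12} \to \frac12\ol{\mu}$. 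This matching of the value at the origin is tied to the speed asymptotics in Corollary~\ref{Cor:TS.ConvPrms}, since $V_\delta\at{-\frac12}$ is governed by the convergence $\mu_\delta/\eps_\delta\to\ol{\mu}$. Writing $\tilde{W}_\delta\at{\tilde{x}} - \tilde{W}_0\at{\tilde{x}} = \bat{\tilde{W}_\delta\at{0}-\tilde{W}_0\at{0}} + \int_0^{\tilde{x}}\bat{\tilde{W}_\delta^\prime - \tilde{W}_0^\prime}\dint{\tilde{y}}$ and bounding the integral by $\eps_\delta^{m+1}\cdot\abs{\tilde{x}} \leq \eps_\delta^{m+1}/\at{2\mu_\delta} \leq C\eps_\delta^m$ (using the lower bound $\mu_\delta \geq c\eps_\delta$ from \eref{Lem:SimpleConv.Eqn2}) then delivers \eref{Thm:TrS.MR.Eqn1}$_1$.

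The main obstacle I anticipate is controlling the value $\tilde{W}_\delta\at{0}$ at the base point with sufficient precision. Unlike the tip scaling, where the initial conditions $\tilde{S}_\delta\at{0}=\tilde{S}_\delta^\prime\at{0}=0$ are exact by construction \eref{Eqn:TipS.Id0}, the value $\tilde{W}_\delta\at{0}=\frac{\mu_\delta}{\eps_\delta}V_\delta\at{-\frac12}$ is not fixed a priori and must be extracted from the integral structure of the traveling-wave equation. I expect the cleanest route is to integrate $\tilde{W}_\delta^\prime = \tilde{F}_\delta - \tilde{G}_\delta$ from $0$ out to the edge $\frac{1}{2\mu_\delta}$ of $J_\delta$, where $\tilde{W}_\delta$ should vanish (reflecting $V_\delta\to 0$ away from the collision region) and where the integral of $\tilde{F}_\delta$ converges, by Theorem~\ref{Thm:TS.Asymptotics} and the linear-growth estimate \eref{Lem:TS.LimitProfile.Eqn2a}, to a quantity involving $\ol{\mu}$; this pins down $\tilde{W}_\delta\at{0}$ to leading order $\frac12\ol{\mu}$ with the required $O\at{\eps_\delta^m}$ error and closes the argument.
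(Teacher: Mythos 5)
Your first two steps reproduce the paper's proof exactly: the identity $\tilde{W}_\delta^\prime\at{\tilde{x}}=\tilde{F}_\delta\at{\tilde{x}}-\tilde{G}_\delta\at{\tilde{x}}=\frac12\bat{\tilde{S}_\delta^{\prime\prime}\at{\tilde{x}}-\tilde{G}_\delta\at{\tilde{x}}+\tilde{G}_\delta\at{-\tilde{x}}}$ obtained from \eref{Eqn:TrS.Def1}, \eref{Eqn:TW.Diff}, \eref{Eqn:DefEpsMu}, and \eref{Eqn:TipS.Id1}, followed by the derivative bound \eref{Thm:TrS.MR.Eqn1}$_2$ via \eref{Eqn:TS.GEstimates} and Theorem \ref{Thm:TS.Asymptotics}, is precisely how the paper argues, and your accounting of the error terms is correct. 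The genuine gap is in your anchoring step. You claim that $\tilde{W}_\delta$ ``should vanish'' at the edge $\tilde{x}=\frac{1}{2\mu_\delta}$ of $J_\delta$. It does not: the transition scaling is centered at $x=-\frac12$, so $\tilde{x}=+\frac{1}{2\mu_\delta}$ corresponds to $x=0$, the tip of the wave, and hence $\tilde{W}_\delta\at{\frac{1}{2\mu_\delta}}=\frac{\mu_\delta}{\eps_\delta}V_\delta\at{0}\to\ol{\mu}$ by Corollary \ref{Cor:TS.ConvPrms} and Lemma \ref{Lem:SimpleConv}. It is the \emph{opposite} edge $\tilde{x}=-\frac{1}{2\mu_\delta}$, corresponding to the foot $x=-1$, where $\tilde{W}_\delta$ is small. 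As written your step would fail outright: since $\tilde{W}_\delta^\prime\approx\frac12\tilde{S}_\delta^{\prime\prime}\geq0$, integrating from $0$ to $\frac{1}{2\mu_\delta}$ under your vanishing claim would force $\tilde{W}_\delta\at{0}\approx-\frac12\ol{\mu}<0$, contradicting $V_\delta\geq0$.

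There are two ways to close the gap. The paper anchors at the \emph{right} edge precisely because the value there is computable exactly from the integral form \eref{Eqn:TW.Int}: $\frac{\eps_\delta}{\mu_\delta}\tilde{W}_\delta\at{\frac{1}{2\mu_\delta}}=V_\delta\at{0}=\si_\delta^{-1}\int_{-1/2}^{+1/2}\Phi^\prime\at{R_\delta\at{x}}\dint{x}$, which after rescaling equals $\int_{I_\delta}2\tilde{F}_\delta\at{\tilde{x}}\dint{\tilde{x}}$; by \eref{Eqn:TipS.Id1} and \eref{Eqn:TS.GEstimates} this is $\tilde{S}_\delta^\prime\at{\frac{1}{2\mu_\delta}}+\DO{\eps_\delta^m}$ (the $\tilde{G}_\delta$-integrals pick up a factor of order $1/\mu_\delta$ and thus contribute $\DO{\eps_\delta^m}$), and Lemma \ref{Lem:TS.LimitProfile} together with \eref{Cor:TS.ConvPrms.Eqn1} then gives $\tilde{W}_\delta\at{\frac{1}{2\mu_\delta}}=\ol{\mu}+\DO{\eps_\delta^m}=\tilde{W}_0\at{\frac{1}{2\mu_\delta}}+\DO{\eps_\delta^m}$, after which \eref{Thm:TrS.MR.Eqn1}$_1$ follows from \eref{Thm:TrS.MR.Eqn1}$_2$ by integration over $J_\delta$, exactly as you propose. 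Alternatively, you could keep your plan and anchor at the \emph{left} edge, but then you must prove $V_\delta\at{-1}=\DO{\eps_\delta^m}$ directly, for instance via $\si_\delta V_\delta\at{-1}=\int_{-3/2}^{-1/2}\Phi^\prime\bat{R_\delta\at{y}}\dint{y}\leq\Phi^\prime\bat{R_\delta\at{-\frac12}}\leq C$, which uses the unimodality of $R_\delta$, the bound \eref{Lem:SimpleConv.Eqn1}, and $\si_\delta\geq c\,\eps_\delta^{-m}$; note that you may \emph{not} invoke the tail estimates of Theorem \ref{Thm:TailEstimates} here, since their proof (see in particular the step \eref{Thm:TailEstimates.PEqn4}) relies on the very theorem you are proving, so that route would be circular.
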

\begin{proof}
\ul{\emph{Uniform estimates for the derivative}}: %
By \eref{Eqn:TrS.Def1}, the traveling wave equation \eref{Eqn:TW.Diff}, and
the definition of $\mu_\delta$ in \eref{Eqn:DefEpsMu} we have
\begin{eqnarray*}
\tilde{W}_\delta^\prime\at{\tilde{x}}&=\frac{\mu^2_\delta}{\eps_\delta} V_\delta^\prime\at{-\mbox{$\frac12$} +\mu_\delta \tilde{x}}
\\&=
\si_\delta \eps_\delta^{m+1}V_\delta^\prime\at{-\mbox{$\frac12$} +\mu_\delta \tilde{x}}
\\&=
\eps_\delta^{m+1}\Bat{\Phi^\prime\bat{R_\delta\at{\mu_\delta\tilde{x}}}-
\Phi^\prime\bat{R_\delta\at{-1+\mu_\delta\tilde{x}}}}\,,
\end{eqnarray*}
and from \eref{Eqn:TipS.Def2},  \eref{Eqn:TipS.Def3} as well as \eref{Eqn:TipS.Id1} we infer that
\begin{equation*}
\tilde{W}_\delta^\prime\at{\tilde{x}}=\tilde{F}_\delta\at{\tilde{x}}-\tilde{G}_\delta\at{\tilde{x}}
=\mbox{$\frac12$}\Bat{\tilde{S}_\delta^{\prime\prime}\at{\tilde{x}}-
\tilde{G}_\delta\at{\tilde{x}}+\tilde{G}_\delta\at{-\tilde{x}}}\,.
\end{equation*}
The estimate \eref{Thm:TrS.MR.Eqn1}$_2$ is now a direct consequence of
\eref{Eqn:TS.GEstimates} and Theorem \ref{Thm:TS.Asymptotics}.
\par
\ul{\emph{Pointwise estimate at $\tilde{x}=1/\at{2\mu_\delta}$}}: %
Using \eref{Eqn:TrS.Def1}, the traveling wave equation \eref{Eqn:TW.Int}, and the
unimodality of $R_\delta$ we obtain
\begin{eqnarray*}
\frac{\eps_\delta}{\mu_\delta}
\tilde{W}_\delta\at{\frac{1}{2\mu_\delta}}= V_\delta\at{0}&=&\int_{-\mbox{$\frac12$}}^{+\mbox{$\frac12$}}\frac{\Phi^\prime\bat{R_\delta\at{x}}}{\si_\delta}\dint{x}
\\&=&\frac{2}{\si_\delta\,\eps_\delta^{m+1}}\int_{0}^{+\mbox{$\frac12$}}\eps_\delta^{m+1}\,\Phi^\prime\bat{R_\delta\at{x}}\dint{x}\,.
\end{eqnarray*}
Thanks to \eref{Eqn:DefEpsMu}, \eref{Eqn:TipS.Def2}, and \eref{Eqn:TipS.Id1} we thus conclude
\begin{eqnarray*}
\tilde{W}_\delta\at{\frac{1}{2\mu_\delta}}=\frac{\mu_\delta^2}{\si_\delta \,\eps_\delta^{m+2}}\int_{I_\delta}2\,\tilde{F}_\delta\at{\tilde{x}}\dint{\tilde{x}}
&=&
\int_{I_\delta}2\,\tilde{F}_\delta\at{\tilde{x}}\dint{\tilde{x}}
\\&=& \int_{I_\delta}\tilde{S}_\delta^{\prime\prime}\at{\tilde{x}}-\tilde{G}_\delta\at{\tilde{x}}-\tilde{G}_\delta\at{-\tilde{x}}\dint{\tilde{x}}
\end{eqnarray*}
and since $\int_{I_\delta} \tilde{G}_\delta\at{\pm\tilde{x}}\dint{\tilde{x}}{\,=\,}\DO{\eps_\delta^m}$ holds according to \eref{Eqn:TS.GEstimates} and Corollary \ref{Cor:TS.ConvPrms}, we obtain
\begin{equation*}
\tilde{W}_\delta\at{\frac{1}{2\mu_\delta}}
=
S_\delta^\prime\at{\frac{1}{2\mu_\delta}}+\DO{\eps_\delta^m}\,,
\end{equation*}
Lemma \ref{Lem:TS.LimitProfile} combined with Theorem \ref{Thm:TS.Asymptotics} and \eref{Cor:TS.ConvPrms.Eqn1} finally provides
\begin{equation*}
\tilde{W}_\delta\at{\frac{1}{2\mu_\delta}}=\ol{\mu}+\DO{\eps_\delta^m}=
\tilde{W}_0\at{\frac{1}{2\mu_\delta}}
+\DO{\eps_\delta^m}\,,
\end{equation*}
so \eref{Thm:TrS.MR.Eqn1}$_1$ follows from \eref{Thm:TrS.MR.Eqn1}$_2$.
\end{proof}
%
%
%
\subsection{Tail estimates}
%
%
%
We complement our previous results by estimates for the tails of
the profiles $V_\delta$ and $R_\delta$. The derivation of those exploits
the exponential decay with respect to $x$, which we establish
by adapting an idea from \cite{HR10}.
\begin{theorem}[tail estimates for $V_\delta$ and $R_\delta$]
\label{Thm:TailEstimates}
The estimates
\begin{equation*}
\sup\limits_{\abs{x}\geq1} V_\delta\at{x}\;\;+\;\;\int_{\abs{x}\geq1}V_\delta\at{x}\dint{x}\;\leq\; C\eps_\delta^{m}
\end{equation*}
and
\begin{equation*}
\sup\limits_{\abs{x}\geq\mbox{$\frac32$}} R_\delta\at{x}\;\;+\;\;
\int_{\abs{x}\geq\mbox{$\frac32$}}R_\delta\at{x}\dint{x}\;\leq \; C\eps_\delta^{m}
\end{equation*}
hold for some constant $C$ independent of $\delta$.
\end{theorem}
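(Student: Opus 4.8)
The plan is to establish the tail estimates via the exponential decay of the profiles, following the approach flagged in the statement as adapted from \cite{HR10}. The starting point is the localization results already in hand: from Lemma~\ref{Lem:SimpleConv} we know $\norm{R_\delta-A\chi}_\infty\leq C\eps_\delta$, and since $A\chi$ is the tent map supported on $\ccinterval{-1}{1}$, the distance profile $R_\delta$ is already pointwise small away from the support of the tent, but only of order $\eps_\delta$, whereas the theorem demands order $\eps_\delta^m$. The gap between $\eps_\delta$ and $\eps_\delta^m$ is precisely what the exponential-decay argument must close, so a crude sup-norm bound will not suffice and genuine decay in $x$ is needed.

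First I would exploit the nonlocal eigenvalue equation in the form $\si_\delta V_\delta=A\Phi^\prime\at{R_\delta}$ from \eref{Eqn:TW.Int}. For $\abs{x}\geq\mbox{$\frac12$}$ the convolution $\bat{A\Phi^\prime\at{R_\delta}}\at{x}$ integrates $\Phi^\prime\at{R_\delta}$ over $\ccinterval{x-\frac12}{x+\frac12}$; since $R_\delta$ is already below roughly $\mbox{$\frac12$}$ there by the tent-map comparison, $\Phi^\prime\at{R_\delta}$ is uniformly bounded and in fact $V_\delta\at{x}=\si_\delta^{-1}\bat{A\Phi^\prime\at{R_\delta}}\at{x}$ inherits a factor $\si_\delta^{-1}\sim\eps_\delta^m$ from Corollary~\ref{Cor:TS.ConvPrms}. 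This is the mechanism that converts the $\eps_\delta$-scale of $R_\delta$ into the $\eps_\delta^m$-scale claimed for the tails, because $\si_\delta$ blows up like $\eps_\delta^{-m}$. Concretely, I would bound $\sup_{\abs{x}\geq1}V_\delta\at{x}$ by $\si_\delta^{-1}\norm{\Phi^\prime\at{R_\delta}}_{\infty,\,\abs{x}\geq\frac12}$ and control the latter through the tent-map estimate, taking care that $\Phi^\prime$ stays bounded on the relevant range of $R_\delta$-values.

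Next I would upgrade the pointwise sup-norm control to the integral bounds. Here the exponential-decay structure enters: rewriting the second equation of \eref{Eqn:TW.Diff} and using the monotonicity and nonnegativity of $V_\delta$, one derives a differential inequality showing that $V_\delta$ decays at least exponentially once $x$ exceeds the collision region, following the \cite{HR10} device of testing against an exponential weight. Integrating the exponential tail then yields $\int_{\abs{x}\geq1}V_\delta\,\dint{x}\leq C\eps_\delta^m$ from the same prefactor. The bound for $R_\delta$ on $\abs{x}\geq\mbox{$\frac32$}$ follows by feeding the $V_\delta$-estimate through $R_\delta=AV_\delta$: for $\abs{x}\geq\mbox{$\frac32$}$ the averaging window $\ccinterval{x-\frac12}{x+\frac12}$ lies entirely in $\abs{y}\geq1$, so both the sup and the integral of $R_\delta$ are dominated directly by the corresponding quantities for $V_\delta$ via \eref{Eqn:EstimatesForA}.

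The main obstacle I anticipate is making the exponential-decay comparison rigorous and uniform in $\delta$. The difficulty is that the natural exponential rate degenerates as $\si_\delta\to\infty$, so one must choose the weight and the threshold carefully -- probably scaling the decay rate with $\si_\delta$ or working on the unscaled variable -- to ensure the constant $C$ does not secretly depend on $\delta$. A secondary subtlety is controlling $\Phi^\prime\at{R_\delta}$ near the boundary of the collision region, where $R_\delta$ could a priori approach $1$; but here the tent-map bound $R_\delta\at{\pm\frac12}\approx\mbox{$\frac12$}$ keeps $R_\delta$ safely away from the singularity at $1$ for $\abs{x}\geq\mbox{$\frac12$}$, so $\Phi^\prime$ is harmlessly $\DO{1}$ on that range and the singular growth of $\Phi^\prime$ plays no role in the tails.
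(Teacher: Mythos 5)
Your proposal is correct in substance, and for the sup bound it takes a genuinely different -- and more elementary -- route than the paper. The paper obtains the crucial pointwise estimate $V_\delta\at{-1}=\DO{\eps_\delta^m}$ by evaluating the rescaled transition profile, $V_\delta\at{-1}=\frac{\eps_\delta}{\mu_\delta}\tilde{W}_\delta\at{-\frac{1}{2\mu_\delta}}$, and then invoking Theorem \ref{Thm:TrS.MR} together with the decay estimate \eref{Lem:TS.LimitProfile.Eqn2a} for $\tilde{S}_0^\prime$; unimodality then spreads this bound to all $\abs{x}\geq1$. You instead read the bound directly off the fixed-point form $\si_\delta V_\delta=A\Phi^\prime\at{R_\delta}$: for $\abs{x}\geq1$ the averaging window lies in $\abs{y}\geq\frac12$, where the tent-map comparison $\norm{R_\delta-A\chi}_\infty\leq C\eps_\delta$ from Lemma \ref{Lem:SimpleConv} keeps $R_\delta$ below $\frac12+C\eps_\delta$, hence $\Phi^\prime\at{R_\delta}$ uniformly bounded, so $V_\delta\at{x}\leq C/\si_\delta\leq C\eps_\delta^m$ by Corollary \ref{Cor:TS.ConvPrms}. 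This avoids the transition-scaling theorem entirely and needs only the tip-scaling results; what the paper's heavier route buys is the explicit limiting value at $x=\pm1$, which it reuses in later sections. Your treatment of $R_\delta$ (window inside $\abs{y}\geq1$ when $\abs{x}\geq\frac32$, plus Fubini for the integral) is essentially the paper's use of $R_\delta\at{x}\leq V_\delta\at{x+\frac12}$.

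The one step you must make concrete is the decay mechanism behind the integral bound. A pointwise differential inequality tested against an exponential weight is not the natural device here, because \eref{Eqn:TW.Diff} is an advance-delay equation: $\si_\delta V_\delta^\prime\at{x}$ involves $R_\delta$ at $x\pm\frac12$, and monotonicity alone does not convert this into a usable ODE inequality for $V_\delta$ at the point $x$. What the paper (adapting \cite{HR10}) actually does is a unit-shift iteration: from $\si_\delta V_\delta\at{x}\leq\Phi^\prime\bat{R_\delta\at{x+\frac12}}$, the unimodality bound $R_\delta\at{x+\frac12}\leq V_\delta\at{x+1}$, and $\Phi^\prime\at{r}\leq Cr$ on the range $r\leq V_\delta\at{-1}\leq1/\sqrt{2}$, one gets $V_\delta\at{x}\leq C\eps_\delta^m\,V_\delta\at{x+1}$ for $x<-2$, hence geometric decay with ratio $C\eps_\delta^m$ per unit step and $\int_{-\infty}^{-1}V_\delta\at{x}\dint{x}\leq C\,V_\delta\at{-1}$. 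Combined with your direct sup bound this closes the integral estimate. Note also that the uniformity issue you flag as the main obstacle does not arise: the decay rate behaves like $m\abs{\ln\eps_\delta}$ and therefore improves, rather than degenerates, as $\delta\to0$.
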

\begin{proof}
Since $V_\delta$ is unimodal and nonnegative by Assumption \ref{Ass:Waves}, we have
\begin{equation}
\label{Thm:TailEstimates.PEqn1}
0\leq V_\delta\at{x}\leq V_\delta\at{-1}\quad \mbox{for all}\quad x\leq-1.
\end{equation}
Moreover, the properties of the convolution operator $A$ combined with
\eref{Eqn:TW.Int} imply
\begin{equation}
\label{Thm:TailEstimates.PEqn5}
R_\delta\at{x}\leq V_\delta\at{x+\mbox{$\frac12$}}\,,\quad V_\delta\at{x}\leq \frac{\Phi^\prime\Bat{R_\delta\at{x+\mbox{$\frac12$}}}}{\si_\delta}
\quad \mbox{for all}\quad x<-1
\end{equation}
and we infer that
\begin{equation}
\label{Thm:TailEstimates.PEqn2}
V_\delta\at{x}\leq \frac{\Phi^\prime\bat{V_\delta\at{x+1}}}{\si_\delta}\leq
C{\eps_\delta}^m V_\delta\at{x+1}\quad \mbox{for all}\quad x<-2
\end{equation}
where we used Corollary \ref{Cor:TS.ConvPrms}, the properties of $\Phi^\prime$, and that the unimodality of $V_\delta$ guarantees
\begin{equation*}
2 V_\delta\at{-1}^2\leq \norm{V_\delta}_2^2=\at{1-\delta}^2\qquad\mbox{and hence}\qquad
V_\delta\at{-1}\leq 1/\sqrt{2}<1\,.
\end{equation*}
By iteration of \eref{Thm:TailEstimates.PEqn2} we obtain
\begin{equation*}
V_\delta\at{x-n}\leq\bat{C\eps_\delta^m}^n V_\delta\at{x}
\qquad \mbox{for all}\quad x<-1\,,\; n\in\Nset
\end{equation*}
and conclude that $V_\delta$ decays exponentially with rate
\begin{equation*}
\la_\delta\geq
m\abs{\ln \eps_\delta}\at{1+\Do{1}}
\end{equation*}
and satisfies
\begin{equation}
\label{Thm:TailEstimates.PEqn3}
\int\limits_{-\infty}^{-1}{V_\delta}\at{x}\dint{x}\leq C V_\delta\at{-1}\,.
\end{equation}
Finally, \eref{Eqn:TrS.Def1} and Theorem \ref{Thm:TrS.MR} ensure that
\begin{equation}
\label{Thm:TailEstimates.PEqn4}
\eqalign{ %
 V_\delta\at{-1}
&= %
\frac{\eps_\delta}{\mu_\delta} \tilde{W}_\delta\at{-\frac{1}{2\mu_\delta}}
\\ %
&= \frac{\eps_\delta}{\mu_\delta} \at{\frac{ \ol{\mu}+\tilde{S}_0^\prime\at{\displaystyle-\frac{1}{2\mu_\delta}}}{2}+\DO{\eps_\delta^m}}=
\DO{\eps_\delta^m}%
} %
\end{equation}
where the last identity stems from \eref{Lem:TS.LimitProfile.Eqn2a} and \eref{Cor:TS.ConvPrms.Eqn1}. The desired estimates for $V_\delta$ are now direct consequences of \eref{Thm:TailEstimates.PEqn1}, \eref{Thm:TailEstimates.PEqn3}, and \eref{Thm:TailEstimates.PEqn4}, and imply the claim for $R_\delta$ thanks to \eref{Thm:TailEstimates.PEqn5}$_1$.
\end{proof}
Notice that the exponential decay rate in the proof of Theorem \ref{Thm:TailEstimates} is asymptotically optimal for small $\delta$. In fact, after linearization of $\Phi^\prime$ in $0$ we find the tail identity
\begin{equation*}
\si_\delta V_\delta \approx A^2 V_\delta\,,
\end{equation*}
and the usual exponential ansatz predicts that the exact decay rate $\la_\delta$ is the positive solution to the transcendental equation
\begin{equation*}
\frac{\mathrm{sinh}\,\at{\la_\delta/2}}{\la_\delta/2}=\sqrt{\si_\delta}= \ol{\mu}\eps_\delta^{-m/2}\Bat{1+\DO{\eps_\delta}}\,.
\end{equation*}
In particular, $\la_\delta$ is large for small $\delta$ and satisfies $\la_\delta=m\abs{\ln\eps_\delta}+\DO{\ln\abs{\ln\eps_\delta}}$.
%
%
%
\subsection{Relations between the parameters\texorpdfstring{ $\delta$, $\eps_\delta$, and $\mu_\delta$}{}}
%
In this section we identify the scaling relations between the small quantities
\begin{equation*}
\delta,\, \quad \eps_\delta\,,\quad \mu_\delta
\end{equation*}
and start with two auxiliary results.
\begin{lemma}[scaling relation between $\mu_\delta$ and $\eps_\delta$]
\label{Lem:ScalingMu}
The formula
\begin{equation*}
{\mu_\delta}=\frac{\ol\mu \, \eps_\delta}{1+\eps_\delta\at{\ol\ka-1}}+\DO{\eps_\delta^{m+1}}
\end{equation*}
holds for all $0<\delta<1$.
\end{lemma}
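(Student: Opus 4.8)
The plan is to pin down $\mu_\delta$ by evaluating two different quantities at the matching point $\tilde{x}_\delta:=1/\at{2\mu_\delta}$, which is the right endpoint of $I_\delta$ and corresponds to $x=\frac{1}{2}$. The first ingredient is the tip scaling itself: evaluating the defining identity \eref{Eqn:TipS.Def1} at $\tilde{x}_\delta$ yields the exact relation $\eps_\delta\tilde{S}_\delta\at{\tilde{x}_\delta}=R_\delta\at{0}-R_\delta\at{\frac{1}{2}}=1-\eps_\delta-R_\delta\at{\frac{1}{2}}$. I would then replace $\tilde{S}_\delta$ by $\tilde{S}_0$ via \eref{Thm:TS.Asymptotics.Eqn1} and insert the large-argument expansion $\tilde{S}_0\at{\tilde{x}}=\ol\mu\,\tilde{x}-\ol\ka+\DO{\tilde{x}^{1-m}}$, which is obtained by feeding \eref{Lem:TS.LimitProfile.Eqn2a} into \eref{Lem:TS.LimitProfile.Eqn2b}. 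Because $\tilde{x}_\delta^{1-m}\sim\eps_\delta^{m-1}$ by Corollary \ref{Cor:TS.ConvPrms}, this produces the first relation
\[
1-\eps_\delta-R_\delta\at{\tfrac{1}{2}}=\frac{\ol\mu\,\eps_\delta}{2\mu_\delta}-\ol\ka\,\eps_\delta+\DO{\eps_\delta^{m}}.
\]

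The second, and more delicate, ingredient is an independent evaluation of $R_\delta\at{\frac{1}{2}}$ to the same order. Here I would start from the exact identity $R_\delta\at{\frac{1}{2}}=\bat{AV_\delta}\at{\frac{1}{2}}=\int_0^1 V_\delta$ and rewrite it, using evenness of $V_\delta$, the tail bound of Theorem \ref{Thm:TailEstimates}, and $\norm{V_\delta}_1=1+\int_\Rset\at{V_\delta-\chi}$, in the form $R_\delta\at{\frac{1}{2}}=\frac{1}{2}+\frac{1}{2}\int_\Rset\at{V_\delta-\chi}+\DO{\eps_\delta^{m}}$. The key step is that the signed integral splits by evenness as $\int_\Rset\at{V_\delta-\chi}=2\int_0^{1/2}\at{V_\delta-1}+2\int_{1/2}^{\infty}V_\delta$, and that both pieces can be pushed through the transition scaling \eref{Eqn:TrS.Def1}: after the substitutions $x=\frac{1}{2}-\mu_\delta\tilde{x}$ and $x=\frac{1}{2}+\mu_\delta\tilde{x}$ and the evenness of $V_\delta$, the plateau piece becomes $\eps_\delta\int_0^{\tilde{x}_\delta}\tilde{W}_\delta\at{\tilde{x}}\dint{\tilde{x}}-\frac{1}{2}$ while the descent piece becomes $\eps_\delta\int_0^{\tilde{x}_\delta}\tilde{W}_\delta\at{-\tilde{x}}\dint{\tilde{x}}+\DO{\eps_\delta^{m}}$, the error being the $\abs{x}>1$ tail. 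Adding the two and invoking the exact symmetry $\tilde{W}_0\at{\tilde{x}}+\tilde{W}_0\at{-\tilde{x}}=\ol\mu$ — immediate from \eref{Eqn:TrS.Lim} and the oddness of $\tilde{S}_0^\prime$ — together with Theorem \ref{Thm:TrS.MR} replaces the combined integrand by $\ol\mu+\DO{\eps_\delta^m}$, so the pieces sum to $\ol\mu\,\eps_\delta\tilde{x}_\delta-\frac{1}{2}+\DO{\eps_\delta^m}=\frac{\ol\mu\,\eps_\delta}{2\mu_\delta}-\frac{1}{2}+\DO{\eps_\delta^m}$. Feeding this back gives the second relation
\[
R_\delta\at{\tfrac{1}{2}}=\frac{\ol\mu\,\eps_\delta}{2\mu_\delta}+\DO{\eps_\delta^{m}}.
\]

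Finally I would eliminate $R_\delta\at{\frac{1}{2}}$ between the two displayed relations. This yields $\ol\mu\,\eps_\delta/\mu_\delta=1+\at{\ol\ka-1}\eps_\delta+\DO{\eps_\delta^{m}}$, and solving for $\mu_\delta$ — the relative $\DO{\eps_\delta^m}$ error turning into an additive $\DO{\eps_\delta^{m+1}}$ because $\mu_\delta=\DO{\eps_\delta}$ — gives exactly the claimed formula. I expect the second ingredient to be the main obstacle: one has to recognize that the single transition layer, read through both halves of $\tilde{W}_\delta$ over the full interval $J_\delta$, simultaneously captures the plateau of $V_\delta$ on $\at{0,\frac{1}{2}}$ and its descent on $\at{\frac{1}{2},1}$, and that it is precisely the symmetry of $\tilde{W}_0$ which removes the otherwise unknown $\DO{\eps_\delta}$ contribution to $R_\delta\at{\frac{1}{2}}$; the region $\abs{x}>1$, where neither the tip nor the transition scaling is valid, must be absorbed separately into the $\DO{\eps_\delta^m}$ tail via Theorem \ref{Thm:TailEstimates}.
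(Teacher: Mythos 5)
Your proof is correct: the error accounting works (in relation (a) the factor $\eps_\delta$ turns the $\DO{\eps_\delta^{m-1}}$ bound of Theorem \ref{Thm:TS.Asymptotics} and the $\DO{\tilde{x}_\delta^{1-m}}$ tail of Lemma \ref{Lem:TS.LimitProfile} into $\DO{\eps_\delta^m}$; in relation (b) the uniform $\DO{\eps_\delta^m}$ bound of Theorem \ref{Thm:TrS.MR} survives integration because $\eps_\delta\tilde{x}_\delta=\DO{1}$), and there is no circularity, since Theorem \ref{Thm:TailEstimates} precedes this lemma in the paper. However, your route is organized differently from the paper's. The paper works entirely at $x=0$: it starts from the single exact identity $1-\eps_\delta=R_\delta\at{0}=2\int_{-1/2}^{0}V_\delta\at{x}\dint{x}=2\eps_\delta\int_0^{1/\at{2\mu_\delta}}\tilde{W}_\delta\at{\tilde{x}}\dint{\tilde{x}}$, replaces $\tilde{W}_\delta$ by $\tilde{W}_0$ via Theorem \ref{Thm:TrS.MR}, integrates $\tilde{W}_0$ in closed form (which produces $\mbox{$\frac12$}\ol{\mu}/\mu_\delta+\tilde{S}_0\at{1/\at{2\mu_\delta}}$), and then feeds in \eref{Lem:TS.LimitProfile.Eqn2b} to generate $\ol{\ka}$; it never touches $R_\delta\at{\frac12}$, never invokes Theorem \ref{Thm:TS.Asymptotics} directly, and needs no tail estimates. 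You instead split $1-\eps_\delta$ as $\bat{R_\delta\at{0}-R_\delta\at{\frac12}}+R_\delta\at{\frac12}$, obtain the first bracket from the tip scaling and the second from the transition scaling over the full window $J_\delta$ (where the symmetry $\tilde{W}_0\at{\tilde{x}}+\tilde{W}_0\at{-\tilde{x}}=\ol{\mu}$ kills the $\tilde{S}_0^\prime$ contribution exactly), and then eliminate $R_\delta\at{\frac12}$; adding your two relations reproduces precisely the paper's single relation $1-\eps_\delta=\eps_\delta\tilde{S}_0\at{1/\at{2\mu_\delta}}+\ol{\mu}\eps_\delta/\at{2\mu_\delta}+\DO{\eps_\delta^m}$. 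So the core inputs ($\ol{\mu}$, $\ol{\ka}$, Theorem \ref{Thm:TrS.MR}) coincide, but your decomposition costs two extra ingredients (Theorem \ref{Thm:TS.Asymptotics} and Theorem \ref{Thm:TailEstimates}), while buying as a byproduct the pointwise asymptotics $R_\delta\at{\frac12}=\ol{\mu}\eps_\delta/\at{2\mu_\delta}+\DO{\eps_\delta^m}$ — a statement the paper only derives afterwards, in Lemma \ref{Lem:PointwiseAsymp}, and there with the help of the very scaling relation being proved here.
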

\begin{proof}
Our starting point is the identity
\begin{eqnarray*}
1-\eps_\delta &=& R_\delta\at{0}=2\int_{-1/2}^0 V_\delta\at{x}\dint{x}\\&=&2\,{\mu_\delta}\int_{0}^{1/\at{2\mu_\delta}}
V_\delta\at{-\frac12+\tilde{\mu}\tilde{x}}\dint{\tilde{x}}=2\,{\eps_\delta}\int_{0}^{1/\at{2\mu_\delta}}
\tilde{W}_\delta\at{\tilde{x}}\dint{\tilde{x}}
\end{eqnarray*}
which follows from \eref{Eqn:TW.Int}, Assumption \ref{Ass:Waves}, and \eref{Eqn:TrS.Def1}. Theorem \ref{Thm:TrS.MR} now yields
\begin{eqnarray*}
1-\eps_\delta &=&
2\,{\eps_\delta}\int_{0}^{1/\at{2\mu_\delta}}
\tilde{W}_0\at{\tilde{x}}\dint{x}+\DO{\eps_\delta^{m}}
\\&=&
\frac{\eps_\delta}{\mu_\delta}\at{\frac{\ol\mu}{2}+\mu_\delta \tilde{S}_0\at{\frac{1}{2\mu_\delta}}}+\DO{\eps_\delta^{m}}\,,
\end{eqnarray*}
while \eref{Lem:TS.LimitProfile.Eqn2b} and \eref{Cor:TS.ConvPrms.Eqn1} provide
\begin{equation*}
\tilde{S}_0\at{\frac{1}{2\mu_\delta}}=  \frac{1}{2\mu_\delta}\tilde{S}_0^\prime\at{\frac{1}{2\mu_\delta}}-\ol\ka+\DO{\mu_\delta^{m-1}}=\frac{\ol{\mu}}{2\mu_\delta}-\ol\ka+\DO{\eps_\delta^{m-1}}\,.
\end{equation*}
The combination of the latter two formulas yields
\begin{equation*}
1-\eps_\delta=\frac{\eps_\delta}{\mu_\delta}\Bat{\ol\mu-\mu_\delta\ol{\ka}}+\DO{\eps_\delta^{m}}\,,
\end{equation*}
and rearranging terms we find the desired result.
\end{proof}
\begin{lemma}[scaling relation between $\eps_\delta$ and $\delta$]
\label{Lem:ScalingDe}
We have
\begin{equation*}
\delta = 1- \sqrt{\Bat{1+\eps_\delta\at{\ol{\ka}-1}}
\at{1-\eps_\delta\at{1+\frac{\ol{\eta}}{\ol\mu}}}}+\DO{\eps_\delta^m}\,,
\end{equation*}
where the right hand side is real-valued and positive for all sufficiently small $\delta>0$.
\end{lemma}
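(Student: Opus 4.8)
The plan is to exploit the normalization $\norm{V_\delta}_2=1-\delta$ together with the energy identity \eref{Lem:SimpleConv.PEqn1b} in order to isolate $\delta$ in terms of $\eps_\delta$ and the limit constants. First I would square the normalization, writing $\at{1-\delta}^2=\norm{V_\delta}_2^2$, and split the integral $\int_\Rset V_\delta^2$ into the two transition regions near $x=\pm\frac12$ plus the tail $\abs{x}\geq1$. By Theorem \ref{Thm:TailEstimates} the tail contributes only $\DO{\eps_\delta^m}$, so the leading behaviour comes entirely from the transition scaling. Changing variables via \eref{Eqn:TrS.Def1} turns the central part of the integral into $\frac{\eps_\delta^2}{\mu_\delta}\int_{J_\delta}\tilde{W}_\delta^2\dint{\tilde{x}}$ (with a factor $2$ for the two symmetric jumps), and Theorem \ref{Thm:TrS.MR} lets me replace $\tilde{W}_\delta$ by $\tilde{W}_0=\frac12\at{\tilde{S}_0^\prime+\ol\mu}$ up to an error controlled by $C\eps_\delta^m$.

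The key analytic step is therefore to evaluate $\int_{J_\delta}\tilde{W}_0^2\dint{\tilde{x}}$ asymptotically as $\mu_\delta\to0$. Since $\tilde{W}_0=\frac12\at{\tilde{S}_0^\prime+\ol\mu}$ and $\tilde{S}_0^\prime\to\pm\ol\mu$ at $\pm\infty$ by \eref{Lem:TS.LimitProfile.Eqn2a}, the integrand does not decay but approaches $\ol\mu^2$ on $I_\delta$; the integral thus grows like $\frac{1}{2\mu_\delta}\cdot\ol\mu^2$ to leading order, and I would extract the next-order constant by writing $\tilde{W}_0^2-\frac14\at{2\ol\mu}^2\cdot\mathbf{1}_{\tilde{x}>0}$ as an integrable remainder. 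Expanding $\at{\frac12\at{\tilde{S}_0^\prime+\ol\mu}}^2=\frac14\at{\tilde{S}_0^\prime}^2+\frac12\ol\mu\tilde{S}_0^\prime+\frac14\ol\mu^2$ and using the conservation law from Lemma \ref{Lem:TS.LimitProfile} to rewrite $\frac14\at{\at{\tilde{S}_0^\prime}^2-\ol\mu^2}=-\frac14\ol\mu^2\bat{\at{1+\tilde{S}_0}^{-m}}$, the finite part of the integral should assemble into the combination $\ol\ka$ and $\ol\eta$ appearing in the statement; in particular the product $\tilde{S}_0^\prime\tilde{S}_0^{\prime\prime}$ and $\tilde{S}_0\tilde{S}_0^{\prime\prime}$ integrate against the definitions in \eref{Lem:TS.LimitProfile.Consts}.

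Having obtained $\at{1-\delta}^2=\frac{\eps_\delta}{\mu_\delta}\ol\mu\cdot\bat{\text{something}}+\DO{\eps_\delta^m}$, I would then substitute the relation $\mu_\delta=\ol\mu\eps_\delta/\at{1+\eps_\delta\at{\ol\ka-1}}+\DO{\eps_\delta^{m+1}}$ from Lemma \ref{Lem:ScalingMu} to eliminate $\mu_\delta$. This turns the prefactor $\eps_\delta/\mu_\delta$ into $\at{1+\eps_\delta\at{\ol\ka-1}}/\ol\mu+\DO{\eps_\delta^m}$, so that $\at{1-\delta}^2$ becomes the product $\bat{1+\eps_\delta\at{\ol\ka-1}}\bat{1-\eps_\delta\at{1+\ol\eta/\ol\mu}}+\DO{\eps_\delta^m}$. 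Taking the square root (legitimate since the argument is close to $1$, hence positive) and solving for $\delta$ yields the claimed formula, and the positivity assertion follows because the argument of the square root is $1+\DO{\eps_\delta}$.

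\textbf{Main obstacle.} The delicate part is the careful bookkeeping in the asymptotic evaluation of $\int_{J_\delta}\tilde{W}_0^2$: the integrand is not integrable over all of $\Rset$, so I must separate the divergent leading term $\sim 1/\mu_\delta$ from the convergent correction, and verify that the finite remainder produces exactly the constants $\ol\ka$ and $\ol\eta$ rather than some other combination. Matching the boundary term at $\tilde{x}=1/\at{2\mu_\delta}$—where $\tilde{S}_0\at{1/\at{2\mu_\delta}}$ itself contributes an $\ol\ka$ via \eref{Lem:TS.LimitProfile.Eqn2b}—with the integrated contribution requires tracking the $\DO{\mu_\delta^{m-1}}=\DO{\eps_\delta^{m-1}}$ error terms consistently, and ensuring they remain subdominant to the $\DO{\eps_\delta^m}$ target demands using $m>1$ exactly as in the proof of Lemma \ref{Lem:TS.LimitProfile}.
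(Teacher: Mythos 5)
Your proposal is correct and follows essentially the same route as the paper's proof: square the normalization $\norm{V_\delta}_2=1-\delta$, discard the tails via Theorem \ref{Thm:TailEstimates}, pass to the transition scaling and replace $\tilde{W}_\delta$ by $\tilde{W}_0$ using Theorem \ref{Thm:TrS.MR}, split off the divergent part of $\int_{J_\delta}\tilde{W}_0^2\dint{\tilde{x}}$, and eliminate $\mu_\delta$ with Lemma \ref{Lem:ScalingMu} before taking the square root. The only variation is your treatment of the finite part: the paper integrates $\int_{I_\delta}\bat{\tilde{S}_0^\prime}^2\dint{\tilde{x}}$ by parts and invokes \eref{Lem:TS.LimitProfile.Eqn2b} together with the definition of $\ol{\eta}$, which produces $-\at{\ol{\ka}\,\ol{\mu}+\ol{\eta}}$ directly, whereas your conservation-law substitution leaves you with $\ol{\mu}^2\int_0^\infty\at{1+\tilde{S}_0}^{-m}\dint{\tilde{x}}$ --- but this equals $\ol{\ka}\,\ol{\mu}+\ol{\eta}$ by precisely such an integration by parts (differentiate $\at{\ol{\mu}\tilde{x}+\tilde{S}_0}\at{\ol{\mu}-\tilde{S}_0^\prime}$ and note the boundary term is $\DO{L^{1-m}}\to0$ since $m>1$), so the obstacle you flag does close and reproduces the paper's intermediate identity $\at{1-\delta}^2=\ol{\mu}^2\eps_\delta^2/\mu_\delta^2-\at{\eps_\delta^2/\mu_\delta}\at{\ol{\ka}\,\ol{\mu}+\ol{\eta}}+\DO{\eps_\delta^m}$.
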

\begin{proof}
From Assumption \ref{Ass:Waves} and the tail estimates in Theorem \ref{Thm:TailEstimates} we derive
\begin{equation*}
\at{1-\delta}^2=\int_\Rset V_\delta\at{x}^2\dint{x}=2\int_{-1}^0V_\delta\at{x}^2\dint{x}+\DO{\eps_\delta^m}\,,
\end{equation*}
and \eref{Eqn:TrS.Def1} gives
\begin{equation*}
\int_{-1}^0V_\delta\at{x}^2\dint{x}=\mu_\delta \int\limits_{J_\delta}V_\delta\at{-\frac12+\mu_\delta\tilde{x}}^2\dint{\tilde{x}}
=\frac{\eps_\delta^2}{\mu_\delta}\int\limits_{J_\delta}\tilde{W}_\delta\at{\tilde{x}}^2\dint{\tilde{x}}\,.
\end{equation*}
 Therefore -- and thanks to Theorem~\ref{Thm:TrS.MR} -- we get
\begin{equation}
\label{Lem:ScalingDe.PEqn1}
\eqalign{ %
\at{1-\delta}^2
&= %
2\,\frac{\eps_\delta^2}{\mu_\delta}\int\limits_{J_\delta}\tilde{W}_0\at{\tilde{x}}^2\dint{\tilde{x}}+\DO{\eps_\delta^{m}}
\cr&=
\frac{\ol{\mu}^2\eps_\delta^2}{2\mu_\delta^2}+\frac{\eps_\delta^2}{\mu_\delta}
\int_{I_\delta}\tilde{S}_0^\prime\at{\tilde{x}}^2 \dint{\tilde{x}}+\DO{\eps_\delta^{m}}
} %
\end{equation}
where we used that $\tilde{S}_0^\prime$ is an odd function, and
integration by parts yields
\begin{eqnarray*}
\int_{I_\delta}\bat{\tilde{S}_0^\prime\at{\tilde{x}}}^2 \dint{\tilde{x}}&=&
\tilde{S}_0\at{\frac{1}{2\mu_\delta}}
\tilde{S}_0^\prime\at{\frac{1}{2\mu_\delta}}
-\int_{I_\delta}\tilde{S}_0\at{\tilde{x}}\, \tilde{S}_0^{\prime\prime}\at{\tilde{x}} \dint{\tilde{x}}
\\&=&
\at{\frac{1}{2\mu_\delta}\tilde{S}_0^\prime\at{\frac{1}{2\mu_\delta}}-\ol{\ka}}\,\tilde{S}_0^\prime\at{\frac{1}{2\mu_\delta}}
-
\ol{\eta}+\DO{\mu_\delta^{m-1}}
\\&=&
\at{\frac{\ol\mu}{2\mu_\delta}-\ol{\ka}}\ol{\mu}-\ol{\eta}+\DO{\mu_\delta^{m-1}}
\end{eqnarray*}
thanks to the estimates and decay results from Lemma \ref{Lem:TS.LimitProfile}. In summary we find
\begin{equation*}
\at{1-\delta}^2=\frac{\ol{\mu}^2\eps_\delta^2}{\mu_\delta^2}-
\frac{\eps_\delta^2}{\mu_\delta}\at{\ol{\ka}\,\ol{\mu}+\ol{\eta}}+
\DO{\eps_\delta^m}
\end{equation*}
and eliminating $\mu_\delta$ by Lemma \ref{Lem:ScalingMu} yields via
\begin{equation}
\label{Lem:ScalingDe.PEqn2}
\at{1-\delta}^2= \Bat{1+\eps_\delta\at{\ol{\ka}-1}}^2 - \eps_\delta \Bat{1+\eps_\delta\at{\ol{\ka}-1}}\at{\ol{\ka}+\frac{\ol{\eta}}{\ol{\mu}}}
+\DO{\eps_\delta^m}
\end{equation}
the assertion.
\end{proof}
 Lemma \ref{Lem:ScalingMu} and Lemma \ref{Lem:ScalingDe} provide explicit formulas for
\begin{equation*}
\mu_\delta\sim \eps_\delta  \qquad \mbox{and}\qquad
\delta\sim\eps_\delta
\end{equation*}
in terms of $\eps_\delta$ but there is a
slight mismatch between both results since the error bounds in
the formula for $\mu_\delta$ are of higher order than those in the scaling law for $\delta$. It is not clear, at least to the authors, whether this mismatch concerns
the real error terms or just means that the bounds in Lemma \ref{Lem:ScalingDe} are less optimal than those in
Lemma \ref{Lem:ScalingMu}.
\par
Our main result concerning the scaling relations between the different
parameters can now be formulated as follows.
\begin{corollary}[leading order scaling laws]
\label{Cor:ScalingLaws}
The relation between $\mu_\delta$ and $\eps_\delta$ can be computed
up to error terms of order $\DO{\eps_\delta^{m+1}}$, while the scaling law between
$\delta$ and $\eps_\delta$ is determined up to order $\DO{\eps_\delta^{m}}$ only. In particular, we have
\begin{eqnarray*}
\mu_\delta &=& \ol{\mu}\,\eps_\delta+\ol\mu\,\at{1-\ol\ka}\,\eps_\delta^2+\Do{\eps_\delta^2}\,,\\
\si_\delta &=&
\ol{\mu}^2 \eps_\delta^{-m}+2\,\ol{\mu}^2\, \at{1-\ol\ka}\,\eps_\delta^{-m+1}
+\Do{\eps_\delta^{-m+1}}
\end{eqnarray*}
for all $m>1$, as well as
\begin{equation*}
\delta = \frac{2\,\ol{\mu}-\ol{\mu}\,\ol{\ka}-\ol{\eta}}{2\,\ol{\mu}}\eps_\delta\,+
\frac{\ol{\mu}^2\,\ol{\ka}^2+\ol{\eta}^2+2\,\ol{\mu}\,\ol{\ka}\,\ol{\eta}}{8\,\ol{\mu}^2}\,\eps_\delta^2+\Do{\eps_\delta^2}
\end{equation*}
provided that $m>2$.
\end{corollary}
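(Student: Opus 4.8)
The plan is to obtain all three expansions by Taylor expanding in powers of $\eps_\delta$ the two scaling relations already established in Lemma~\ref{Lem:ScalingMu} and Lemma~\ref{Lem:ScalingDe}, while carefully tracking at which order the respective remainder terms enter. No new analytic input is needed: the entire content of the corollary is elementary asymptotic bookkeeping, and the only real subtlety is the asymmetry between the two error bounds ($\DO{\eps_\delta^{m+1}}$ for $\mu_\delta$ versus $\DO{\eps_\delta^{m}}$ for $\delta$).

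First I would treat $\mu_\delta$. Starting from Lemma~\ref{Lem:ScalingMu} I expand the rational factor as a geometric series,
\begin{equation*}
\frac{1}{1+\eps_\delta\at{\ol\ka-1}}=1+\at{1-\ol\ka}\eps_\delta+\DO{\eps_\delta^2}\,,
\end{equation*}
so that $\mu_\delta=\ol\mu\,\eps_\delta+\ol\mu\at{1-\ol\ka}\eps_\delta^2+\DO{\eps_\delta^3}+\DO{\eps_\delta^{m+1}}$. Since $m>1$, both remainder families are $\Do{\eps_\delta^2}$, which is exactly the first claimed formula. For $\si_\delta$ I invoke the defining relation $\mu_\delta^2=\si_\delta\,\eps_\delta^{m+2}$ from \eref{Eqn:DefEpsMu}, square the expansion just obtained, and divide by $\eps_\delta^{m+2}$; the remainder propagated from the squaring is $\Do{\eps_\delta^{-m+1}}$, so the $\si_\delta$-formula also holds for all $m>1$.

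The expansion of $\delta$ requires a little more care. Here I start from Lemma~\ref{Lem:ScalingDe}, expand the product under the square root to second order in $\eps_\delta$, and apply $\sqrt{1+u}=1+\frac{1}{2}u-\frac{1}{8}u^2+\DO{u^3}$ with $u=\DO{\eps_\delta}$. The linear coefficient of $\delta$ is read off directly from the linear coefficient of the product, while the quadratic coefficient combines the genuine quadratic term of the product with the $-\frac{1}{8}u^2$ contribution; a short computation shows that the two organise into the perfect square $\at{\ol\mu\,\ol\ka+\ol\eta}^2/\at{8\,\ol\mu^2}$, which is the stated coefficient. The hypothesis $m>2$ enters precisely to guarantee that the remainder $\DO{\eps_\delta^m}$ inherited from Lemma~\ref{Lem:ScalingDe} is $\Do{\eps_\delta^2}$ and hence cannot corrupt this second-order coefficient; for $1<m\leq2$ the same argument still delivers the linear term but buries the quadratic one in the error.

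I expect the main difficulty to be organisational rather than analytic: one must keep the two remainder orders strictly separate and check, at every coefficient retained, that the propagated error is of genuinely smaller order. The squaring step for $\si_\delta$ and the square-root expansion for $\delta$ are where sign and ordering slips are most likely, so I would cross-validate both the structure of the computation and the resulting coefficients against the numerical values of $\ol\mu$, $\ol\ka$, $\ol\eta$ displayed in Figure~\ref{Fig.Coefficients}.
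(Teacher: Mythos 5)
Your method is correct and coincides with the paper's own proof, which is literally a one-line citation of exactly the ingredients you use: expand Lemma \ref{Lem:ScalingMu} geometrically, insert the result into $\si_\delta=\mu_\delta^2\,\eps_\delta^{-m-2}$ from \eref{Eqn:DefEpsMu}, and expand the square root in Lemma \ref{Lem:ScalingDe}; your bookkeeping of where $m>1$ and $m>2$ are needed is also the intended one, and your expansions for $\mu_\delta$ and $\si_\delta$ are correct as written.

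However, your two assertions about the $\delta$-expansion cannot both hold, and a faithful execution of your own plan would have exposed this. Expanding Lemma \ref{Lem:ScalingDe} as stated, the product under the root is
\begin{equation*}
1+\eps_\delta\at{\ol\ka-2-\frac{\ol\eta}{\ol\mu}}
-\eps_\delta^2\at{\ol\ka-1}\at{1+\frac{\ol\eta}{\ol\mu}}\,,
\end{equation*}
so reading off the linear coefficient of $\delta$ gives $\at{2\,\ol\mu-\ol\mu\,\ol\ka+\ol\eta}/\at{2\,\ol\mu}$, i.e.\ with $+\ol\eta$, whereas the corollary prints $-\ol\eta$; the quadratic coefficient, on the other hand, does organize into the perfect square $\at{\ol\mu\,\ol\ka+\ol\eta}^2/\bat{8\,\ol\mu^2}$ exactly as you claim. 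These two outcomes come from one and the same expansion, so they stand or fall together: flipping the sign of $\ol\eta$ to recover the printed linear term would force the quadratic coefficient to be $\at{\ol\mu\,\ol\ka-\ol\eta}^2/\bat{8\,\ol\mu^2}$, contradicting the printed cross term $+2\,\ol\mu\,\ol\ka\,\ol\eta$. The conclusion is that the corollary as stated is inconsistent with Lemma \ref{Lem:ScalingDe}, and since the perfect-square quadratic only arises under the lemma's signs, the typo sits in the corollary's linear coefficient. This is the paper's inconsistency rather than a flaw in your approach, but by asserting that both coefficients match you glossed over it instead of detecting it; note also that your proposed numerical cross-check against Figure \ref{Fig.Coefficients} cannot settle the matter, since that figure only tabulates the values of $\ol\mu$, $\ol\ka$, $\ol\eta$, not the signs with which they enter the expansion.
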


\begin{proof}
All assertions are provided by Lemma \ref{Lem:ScalingMu}, Lemma \ref{Lem:ScalingDe}, and formula \eref{Eqn:DefEpsMu}.
\end{proof}
%
%

\subsection{Foot scaling \texorpdfstring{of $R_\delta$}{}}
%
%
We study now the asymptotic behavior of $R_\delta$ near $x=\pm1$.  To this end
we define
\begin{equation}
\label{Eqn:FoS.Def1}%
\tilde{T}_\delta\at{\tilde{x}}:=\frac{R_\delta\at{-1+\mu_\delta\tilde{x}}}{\eps_\delta}
\end{equation}
and find by direct calculations the identity
\begin{equation}
\label{Eqn:FoS.Id1}%
\tilde{T}_\delta^{\prime\prime}\at{\tilde{x}}=\tilde{F}_\delta\at{\tilde{x}}+
\tilde{H}_\delta\at{\tilde{x}}-2\tilde{G}_\delta\at{\tilde{x}}\approx
\mbox{$\frac12$} \tilde{S}_\delta^{\prime\prime}\at{\tilde{x}}
\end{equation}
because both  $\tilde{G}_\delta$ and
\begin{equation*}
\tilde{H}_\delta\at{\tilde{x}}:=\Phi^\prime\bat{R_\delta\at{-2+\mu_\delta\tilde{x}}}
\end{equation*}
can be neglected on the interval $I_\delta$. We further define
\begin{equation}
\label{Eqn:FoS.Lim}
\tilde{T}_0\at{\tilde{x}}:=\mbox{$\frac12$}\at{\tilde{S}_0\at{\tilde{x}}+\ol{\mu}\,\tilde{x}+\ol{\ka}}
\end{equation}
and show that $\tilde{T}_\delta$ converges as $\delta\to0$ to $\tilde{T}_0$, see Figure \ref{Fig.Convergence.TnFScaling}.

\begin{lemma}[asymptotics of $R_\delta$ at $x=\pm\frac12$ and for $V_\delta$ at $x=0$]
\label{Lem:PointwiseAsymp}
The terms $R_\delta^\prime\at{-\frac12}$, $2R_\delta\at{\frac12}$, and
$ V_\delta\at{0}$ are identical to leading order in $\delta$.
 More precisely, we have
\begin{equation*}
 R_\delta^\prime\at{\mbox{$\frac12$}}=
 \frac{\eps_\delta\, \ol\mu}{\mu_\delta }+\DO{\eps_\delta^m}=\bat{1+\eps_\delta\at{\ol{\ka}-1}}+\DO{\eps_\delta^m}\,,
\end{equation*}
and
\begin{equation*}
\babs{2\,R_\delta\at{\mbox{$\frac12$}}-R_\delta^\prime\at{-\mbox{$\frac12$}}}=
\DO{\eps_\delta^m}\,,\qquad\babs{V_\delta\at{0}-R_\delta^\prime\at{\mbox{-$\frac12$}}}
=\DO{\eps_\delta^m}
\end{equation*}
for all $0<\delta<1$.
\end{lemma}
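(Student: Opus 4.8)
The plan is to pin down the leading-order value of each of the three quantities separately and then read off the assertions by comparison. The natural starting point is the differentiated traveling-wave equation \eref{Eqn:TW.Diff}$_1$, which gives $R_\delta^\prime\at{-\frac12}=V_\delta\at{0}-V_\delta\at{-1}$. By the tail estimate of Theorem \ref{Thm:TailEstimates} -- more precisely the bound $V_\delta\at{-1}=\DO{\eps_\delta^m}$ established in its proof -- we immediately obtain $R_\delta^\prime\at{-\frac12}=V_\delta\at{0}+\DO{\eps_\delta^m}$, which is already the third claim $\babs{V_\delta\at{0}-R_\delta^\prime\at{-\frac12}}=\DO{\eps_\delta^m}$. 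It therefore remains to compute $V_\delta\at{0}$ and $2R_\delta\at{\frac12}$ up to order $\DO{\eps_\delta^m}$.

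For $V_\delta\at{0}$ I would evaluate the transition scaling \eref{Eqn:TrS.Def1} at the right endpoint $\tilde{x}=1/\at{2\mu_\delta}$, where $-\frac12+\mu_\delta\tilde{x}=0$ and hence $\tilde{W}_\delta\at{1/\at{2\mu_\delta}}=\frac{\mu_\delta}{\eps_\delta}V_\delta\at{0}$. Theorem \ref{Thm:TrS.MR} together with the explicit form \eref{Eqn:TrS.Lim} and the decay estimate \eref{Lem:TS.LimitProfile.Eqn2a} yields $\tilde{W}_\delta\at{1/\at{2\mu_\delta}}=\tilde{W}_0\at{1/\at{2\mu_\delta}}+\DO{\eps_\delta^m}=\ol{\mu}+\DO{\eps_\delta^m}$, since $\tilde{W}_0=\frac12\at{\tilde{S}_0^\prime+\ol\mu}$ and $\tilde{S}_0^\prime\to\ol\mu$. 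Because $\eps_\delta/\mu_\delta$ is bounded by Corollary \ref{Cor:TS.ConvPrms}, this gives $V_\delta\at{0}=\frac{\eps_\delta\ol\mu}{\mu_\delta}+\DO{\eps_\delta^m}$, and substituting the expansion of $\mu_\delta$ from Lemma \ref{Lem:ScalingMu} converts the leading factor into $1+\eps_\delta\at{\ol\ka-1}+\DO{\eps_\delta^m}$. In view of the previous paragraph this is exactly the first displayed formula (read with argument $-\frac12$, consistent with the three quantities being compared).

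The remaining piece is $2R_\delta\at{\frac12}=2\int_0^1 V_\delta\at{x}\dint{x}$, using $R_\delta\at{\frac12}=\at{AV_\delta}\at{\frac12}$ from \eref{Eqn:TW.Int}. I would split the integral as $\int_0^{1/2}V_\delta+\int_{1/2}^1V_\delta$. By evenness the first part equals $\frac12 R_\delta\at{0}=\frac12\at{1-\eps_\delta}$. For the second part I again use evenness and the transition scaling about $-\frac12$ to write $\int_{1/2}^1V_\delta=\int_{-1}^{-1/2}V_\delta=\eps_\delta\int_{-1/(2\mu_\delta)}^0\tilde{W}_\delta\at{\tilde{x}}\dint{\tilde{x}}$; replacing $\tilde{W}_\delta$ by $\tilde{W}_0$ and integrating the explicit expression \eref{Eqn:TrS.Lim}, the affine asymptotics $\tilde{S}_0\at{1/\at{2\mu_\delta}}=\frac{\ol\mu}{2\mu_\delta}-\ol\ka+\DO{\eps_\delta^{m-1}}$ -- obtained by combining \eref{Lem:TS.LimitProfile.Eqn2b} with \eref{Lem:TS.LimitProfile.Eqn2a} and $\mu_\delta\sim\ol\mu\,\eps_\delta$ -- produces $\frac{\eps_\delta\ol\ka}{2}+\DO{\eps_\delta^m}$. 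Collecting terms gives $2R_\delta\at{\frac12}=1+\eps_\delta\at{\ol\ka-1}+\DO{\eps_\delta^m}$, which coincides with the value of $R_\delta^\prime\at{-\frac12}$ found above; subtracting yields the middle claim $\babs{2R_\delta\at{\frac12}-R_\delta^\prime\at{-\frac12}}=\DO{\eps_\delta^m}$.

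The main obstacle is the second integral in the last step. There one integrates the transition profile over the long interval of length $1/\at{2\mu_\delta}\sim1/\eps_\delta$, so the uniform bound $\babs{\tilde{W}_\delta-\tilde{W}_0}\leq C\eps_\delta^m$ from \eref{Thm:TrS.MR.Eqn1}$_1$ enters only after being multiplied both by the prefactor $\eps_\delta$ and by the interval length, and one must verify that these factors balance to leave a genuine $\DO{\eps_\delta^m}$ error. Equally delicate is the extraction of the next-order constant $\ol\ka$: it appears solely through the linear-growth asymptotics of $\tilde{S}_0$ in \eref{Lem:TS.LimitProfile.Eqn2b}, and one has to confirm that the remainder there, evaluated at $\tilde{x}=1/\at{2\mu_\delta}$ and multiplied by $\eps_\delta$, is indeed of order $\DO{\eps_\delta^m}$ rather than a worse power of $\eps_\delta$. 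Everything else reduces to the bookkeeping already prepared in Lemma \ref{Lem:ScalingMu} and Corollary \ref{Cor:TS.ConvPrms}.
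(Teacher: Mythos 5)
Your proposal is correct, and for two of the three claims it takes a genuinely different route from the paper. The paper obtains everything by endpoint evaluation of rescaled profiles at $\tilde{x}=\frac{1}{2\mu_\delta}$: it writes $R_\delta^\prime\at{-\frac12}=\frac{\eps_\delta}{\mu_\delta}\tilde{S}_\delta^\prime\at{\frac{1}{2\mu_\delta}}$ and $R_\delta\at{\frac12}=1-\eps_\delta-\eps_\delta\tilde{S}_\delta\at{\frac{1}{2\mu_\delta}}$ and invokes Theorem \ref{Thm:TS.Asymptotics} together with \eref{Lem:TS.LimitProfile.Eqn2a}--\eref{Lem:TS.LimitProfile.Eqn2b}, treats $V_\delta\at{0}$ exactly as you do via $\tilde{W}_\delta$ and Theorem \ref{Thm:TrS.MR}, and then compares the three expansions through Lemma \ref{Lem:ScalingMu}. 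You instead (a) get the comparison $\babs{V_\delta\at{0}-R_\delta^\prime\at{-\frac12}}=\DO{\eps_\delta^m}$ directly from the difference equation \eref{Eqn:TW.Diff}$_1$ and the tail bound $V_\delta\at{-1}=\DO{\eps_\delta^m}$ of Theorem \ref{Thm:TailEstimates}, with no rescaling at all, and (b) compute $R_\delta\at{\frac12}=\int_0^1 V_\delta\at{x}\dint{x}$ by integrating the transition profile, which parallels the paper's proof of Lemma \ref{Lem:ScalingMu} (the same computation done there for $R_\delta\at{0}$) rather than its proof of this lemma. Both deviations are sound, and the two delicate balances you flag do close: in (b) the substitution error is $\eps_\delta\cdot\at{2\mu_\delta}^{-1}\cdot\DO{\eps_\delta^m}=\DO{\eps_\delta^m}$ because $\mu_\delta\geq c\,\eps_\delta$ by Lemma \ref{Lem:SimpleConv}, the $\frac{\ol{\mu}}{2\mu_\delta}$ terms cancel exactly, and the remainder $\eps_\delta\,\DO{\mu_\delta^{m-1}}$ inherited from \eref{Lem:TS.LimitProfile.Eqn2b} is $\DO{\eps_\delta^m}$. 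The trade-off: your argument for the third claim is shorter and more structural, but it makes the lemma depend on the tail estimates, which the paper's proof does not need; conversely, the paper's tip-scaling treatment of $R_\delta\at{\frac12}$ gets the $\DO{\eps_\delta^m}$ error in one line from the bound $\eps_\delta\cdot\DO{\eps_\delta^{m-1}}$ of \eref{Thm:TS.Asymptotics.Eqn1}, avoiding any long-interval integration. Your remark that the first display should be read at $-\frac12$ is also consistent with the paper, whose proof fixes the sign by writing $R_\delta^\prime\at{-\frac12}=-R_\delta^\prime\at{\frac12}$.
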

\begin{proof}
From \eref{Eqn:TipS.Def1},  Lemma \ref{Lem:TS.LimitProfile}, Theorem \ref{Thm:TS.Asymptotics}, and \eref{Cor:TS.ConvPrms.Eqn1} we infer
\begin{eqnarray*}
R_\delta^\prime\at{-\mbox{$\frac12$}}&=&-R_\delta^\prime\at{\mbox{$\frac12$}}=\frac{\eps_\delta}{\mu_\delta}\tilde{S}_\delta^\prime\at{\frac{1}{2\mu_\delta}}\\&=&\frac{\eps_\delta}{\mu_\delta}\tilde{S}_0^\prime\at{\frac{1}{2\mu_\delta}}+\DO{\mu_\delta^m}=
\frac{\eps_\delta\,\ol{\mu}}{\mu_\delta}+\DO{\eps_\delta^m}
\end{eqnarray*}
and similarly
\begin{eqnarray*}
R_\delta\at{\mbox{$\frac12$}}&=
1-\eps_\delta-\eps_\delta\tilde{S}_\delta\at{\frac{1}{2\mu_\delta}} =1-\eps_\delta-\eps_\delta\tilde{S}_0\at{\frac{1}{2\mu_\delta}}+\DO{\mu_\delta^m}
\\&=1-\eps_\delta-\eps_\delta\at{\frac{1}{2\mu_\delta}\tilde{S}_0^\prime\at{\frac{1}{2\mu_\delta}}-\ol{\ka}}+\DO{\eps_\delta^m}
\\&=1+\eps_\delta\at{\ol\ka-1}-\frac{\eps_\delta\,\ol{\mu}}{2\,\mu_\delta}+\DO{\eps_\delta^m}\,.
\end{eqnarray*}
Thanks to \eref{Eqn:TrS.Def1} and Theorem \ref{Thm:TrS.MR} we also find
\begin{equation*}
V_\delta\at{0}=\frac{\eps_\delta}{\mu_\delta}\tilde{W}_0\at{\frac{1}{2\mu_\delta}}+\DO{\eps_\delta^m}=\frac{\eps_\delta\,\ol{\mu}}{\mu_\delta}+\DO{\eps_\delta^m}\,,
\end{equation*}
and the result follows from Lemma \ref{Lem:ScalingMu}.
\end{proof}
\begin{theorem}[convergence under the foot scaling]
\label{Thm:FoS.MR}
The estimate
\begin{equation*}
\sup\limits_{\tilde{x}\in  J_\delta}\Bat{ %
\eps^2\,\babs{\tilde{T}_\delta\at{\tilde{x}}-\tilde{T}_0\at{\tilde{x}}}+
\eps\,\babs{\tilde{T}_\delta^\prime\at{\tilde{x}}-\tilde{T}_0^\prime\at{\tilde{x}}}+
\babs{\tilde{T}_\delta^{\prime\prime}\at{\tilde{x}}-\tilde{T}_0^{\prime\prime}\at{\tilde{x}}}
}\leq C\eps^{m+1}
\end{equation*}
holds with some constant $C$ independent of $\delta$. In particular, we have
\begin{equation*}
R_\delta\at{\pm1} = \mbox{$\frac12$}  \ol{\ka}\, \eps_\delta+\DO{\eps_\delta^m}\,.
\end{equation*}
\end{theorem}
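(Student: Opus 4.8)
The plan is to work from the second derivative downwards, exploiting the identity \eref{Eqn:FoS.Id1}, and then to integrate twice while anchoring the two constants of integration at the transition point $\tilde{x}_\ast:=1/\at{2\mu_\delta}$, which corresponds to $x=-\mbox{$\frac12$}$ and where the tip scaling already controls $R_\delta$ and $R_\delta^\prime$ through Lemma \ref{Lem:PointwiseAsymp}. Since each integration runs over $J_\delta$, whose length is of order $1/\mu_\delta\sim 1/\eps_\delta$ by Lemma \ref{Lem:SimpleConv}, every integration loses one power of $\eps_\delta$; this is precisely the reason why the three terms in the claimed bound appear with the weights $\eps_\delta^2$, $\eps_\delta$, and $1$.

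First I would settle the second derivative. Combining \eref{Eqn:FoS.Id1} with \eref{Eqn:TipS.Id1} and using $\tilde{T}_0^{\prime\prime}=\mbox{$\frac12$}\tilde{S}_0^{\prime\prime}$, which follows from \eref{Eqn:FoS.Lim}, gives
\begin{equation*}
\tilde{T}_\delta^{\prime\prime}-\tilde{T}_0^{\prime\prime}
=\tilde{H}_\delta-\mbox{$\frac32$}\tilde{G}_\delta\at{\tilde{x}}+\mbox{$\frac12$}\tilde{G}_\delta\at{-\tilde{x}}
+\mbox{$\frac12$}\Bat{\tilde{S}_\delta^{\prime\prime}-\tilde{S}_0^{\prime\prime}}\,.
\end{equation*}
On $J_\delta$ the $\tilde{G}_\delta$-terms are $\DO{\eps_\delta^{m+1}}$ by \eref{Eqn:TS.GEstimates}, the difference $\tilde{S}_\delta^{\prime\prime}-\tilde{S}_0^{\prime\prime}$ is $\DO{\eps_\delta^{m+1}}$ by \eref{Thm:TS.Asymptotics.Eqn2}$_2$, and $\tilde{H}_\delta$ is negligible because the tail estimates of Theorem \ref{Thm:TailEstimates} force $R_\delta$ to be exponentially small on the range $\ccinterval{-\frac52}{-\frac32}$ swept out by its argument. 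Hence $\babs{\tilde{T}_\delta^{\prime\prime}-\tilde{T}_0^{\prime\prime}}\leq C\eps_\delta^{m+1}$ uniformly on $J_\delta$, which is the third term of the estimate.

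Next I would integrate this bound from $\tilde{x}_\ast$. The integral contributes at most $C\eps_\delta^{m+1}\abs{J_\delta}\leq C\eps_\delta^{m}$, and the anchor term is of the same order: indeed $\tilde{T}_\delta^\prime\at{\tilde{x}_\ast}=\frac{\mu_\delta}{\eps_\delta}R_\delta^\prime\at{-\mbox{$\frac12$}}=\ol\mu+\DO{\eps_\delta^m}$ by Lemma \ref{Lem:PointwiseAsymp} and \eref{Cor:TS.ConvPrms.Eqn1}, while $\tilde{T}_0^\prime\at{\tilde{x}_\ast}=\mbox{$\frac12$}\bat{\tilde{S}_0^\prime\at{\tilde{x}_\ast}+\ol\mu}=\ol\mu+\DO{\eps_\delta^m}$ by \eref{Lem:TS.LimitProfile.Eqn2a}; thus $\babs{\tilde{T}_\delta^\prime-\tilde{T}_0^\prime}\leq C\eps_\delta^m$ on $J_\delta$. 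Integrating a second time from $\tilde{x}_\ast$ costs another factor $\eps_\delta^{-1}$, so the integral is $\DO{\eps_\delta^{m-1}}$, and the crux is the remaining anchor term $\tilde{T}_\delta\at{\tilde{x}_\ast}-\tilde{T}_0\at{\tilde{x}_\ast}$, in which each summand individually blows up like $1/\eps_\delta$. Writing $\tilde{T}_\delta\at{\tilde{x}_\ast}=R_\delta\at{\mbox{$\frac12$}}/\eps_\delta$ and expanding $R_\delta\at{\mbox{$\frac12$}}$ by Lemma \ref{Lem:PointwiseAsymp}, and expanding $\tilde{T}_0\at{\tilde{x}_\ast}$ by \eref{Lem:TS.LimitProfile.Eqn2a}--\eref{Lem:TS.LimitProfile.Eqn2b}, I would invoke the scaling relation of Lemma \ref{Lem:ScalingMu} in the form $\ol\mu/\mu_\delta=1/\eps_\delta+\at{\ol\ka-1}+\DO{\eps_\delta^{m-1}}$ to cancel both the singular $1/\eps_\delta$ contributions and the finite $\ol\ka$-terms, so that $\tilde{T}_\delta\at{\tilde{x}_\ast}-\tilde{T}_0\at{\tilde{x}_\ast}=\DO{\eps_\delta^{m-1}}$ and therefore $\babs{\tilde{T}_\delta-\tilde{T}_0}\leq C\eps_\delta^{m-1}$ on $J_\delta$. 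The three bounds combined are exactly the weighted estimate of the theorem.

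The main obstacle is this final cancellation: the anchor values $\tilde{T}_\delta\at{\tilde{x}_\ast}$ and $\tilde{T}_0\at{\tilde{x}_\ast}$ are each of order $\eps_\delta^{-1}$, so reaching $\DO{\eps_\delta^{m-1}}$ for their difference requires that the leading and next-to-leading terms match identically, which is guaranteed only by the mutual consistency of Lemma \ref{Lem:TS.LimitProfile}, Lemma \ref{Lem:PointwiseAsymp}, and Lemma \ref{Lem:ScalingMu}. For the concluding pointwise assertion I would evaluate the function estimate at $\tilde{x}=0$: here $\tilde{T}_0\at{0}=\mbox{$\frac12$}\ol\ka$ by \eref{Eqn:FoS.Lim} and $\tilde{S}_0\at{0}=0$, while $R_\delta\at{\pm1}=\eps_\delta\tilde{T}_\delta\at{0}$ by \eref{Eqn:FoS.Def1} and the evenness of $R_\delta$; multiplying $\tilde{T}_\delta\at{0}=\mbox{$\frac12$}\ol\ka+\DO{\eps_\delta^{m-1}}$ by $\eps_\delta$ then yields $R_\delta\at{\pm1}=\mbox{$\frac12$}\ol\ka\,\eps_\delta+\DO{\eps_\delta^m}$.
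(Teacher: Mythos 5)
Your proposal is correct and follows essentially the same route as the paper: a uniform second-derivative bound obtained from \eref{Eqn:FoS.Id1}, \eref{Eqn:TS.GEstimates} and Theorem \ref{Thm:TS.Asymptotics}, then anchoring the first and zeroth derivatives at $\tilde{x}=1/(2\mu_\delta)$ via Lemma \ref{Lem:PointwiseAsymp}, the decay estimates of Lemma \ref{Lem:TS.LimitProfile} and the scaling law of Lemma \ref{Lem:ScalingMu}, followed by integration over $J_\delta$ with the loss of one factor $\eps_\delta^{-1}$ per integration. The only cosmetic difference is your treatment of $\tilde{H}_\delta$ through the tail estimates of Theorem \ref{Thm:TailEstimates} (which gives an even smaller bound, though ``exponentially small'' overstates it slightly), whereas the paper just uses unimodality of $R_\delta$ and monotonicity of $\Phi^\prime$ to get $0\le\tilde{H}_\delta\le\tilde{G}_\delta\le C\eps_\delta^{m+1}$; both suffice.
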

\begin{proof}
The unimodality of $R_\delta$, the monotonicity of $\Phi^\prime$, and \eref{Eqn:TS.GEstimates} imply
\begin{equation*}
0\leq \tilde{H}_\delta\at{\tilde{x}}\leq \tilde{G}_\delta\at{\tilde{x}}\leq C\eps_\delta^{m+1}
\end{equation*}
for all $\tilde{x}\in J_\delta$. Combining this with \eref{Eqn:FoS.Id1} and Theorem \ref{Thm:TS.Asymptotics} we arrive at the desired estimates for the second derivatives. We also notice that  Lemma \ref{Lem:TS.LimitProfile} along with Lemma \ref{Lem:PointwiseAsymp} imply
\begin{eqnarray*}
\tilde{T}_0^\prime\at{\frac{1}{2\mu_\delta}}
&=&\mbox{$\frac12$}\tilde{S}_0^\prime\at{\frac{1}{2\mu_\delta}}+ \mbox{$\frac12$}\ol\mu
=\ol\mu+\DO{\eps_\delta^m}\\&=&\frac{\mu_\delta}{\eps_\delta}R_\delta^\prime\at{\mbox{$\frac12$}}+\DO{\eps_\delta^m}=
\tilde{T}_\delta^\prime\at{\frac{1}{2\mu_\delta}}+\DO{\eps_\delta^m}\,,
\end{eqnarray*}
where the last identity stems from \eref{Eqn:FoS.Def1}, and by similar arguments we justify
\begin{eqnarray*}
\tilde{T}_0\at{\frac{1}{2\mu_\delta}}&=&\frac12 \tilde{S}_0\at{\frac{1}{2\mu_\delta}}
+\frac14\frac{\ol{\mu}}{\mu_\delta}+\frac12\ol{\ka}
\\&=&\frac12\at{\frac{\ol{\mu}}{2\mu_\delta}-\ol{\ka}}+
\frac14\frac{\ol{\mu}}{\mu_\delta}+\frac12\ol{\ka}+\DO{\eps_\delta^{m-1}}
\\&=&\frac{\ol{\mu}}{2\mu_\delta}+\DO{\eps_\delta^{m-1}}\\&=&
\frac{R_\delta\at{\pm\frac12}}{\eps_\delta}+\DO{\eps_\delta^{m-1}}
=
\tilde{T}_\delta\at{\frac{1}{2\mu_\delta}}+\DO{\eps_\delta^{m-1}}\,.
\end{eqnarray*}
The assertions for the first and zeroth derivatives can thus be derived from the estimates for the second derivatives by
integration with respect to $\tilde{x}$.
\end{proof}
%
%
\subsection{Summary on the asymptotic analysis}
%
%
We finally combine all partial results as follows.
\begin{theorem}[global approximation in the high-energy limit]
\label{Thm:GlobApp}
The formulas
\begin{equation*}
\hat{R}_\eps\at{x}:=\left\{
\begin{array}{lcl}
1-\eps-\eps\,\tilde{S}_0\at{\displaystyle\frac{\abs{x}}{\hat{\mu}_\eps}}&&\mbox{for $0\leq\abs{x}<\frac12$}\\
\eps\,\tilde{T}_0\at{\displaystyle\frac{1-\abs{x}}{\hat{\mu}_\eps }}&&\mbox{for $\frac12\leq\abs{x}<\frac32$}\\
0&&\mbox{else}
\end{array}\right.\
\end{equation*}
and
\begin{equation*}
\hat{V}_\eps\at{x}:=\frac{\eps}{\hat{\mu}_\eps}\left\{
\begin{array}{lcl}
\tilde{W}_0\at{\displaystyle\frac{\frac12-\abs{x}}{\hat{\mu}_\eps }}&&\mbox{for $0\leq\abs{x}<1$}\\
0&&\mbox{else}
\end{array}\right.
\end{equation*}
with
\begin{equation*}
\hat{\mu}_\eps := \frac{\ol{\mu}\,\eps}{1+\eps\,\at{\ol\ka-1}}\,\qquad \hat{\si}_\eps := \eps^{-m-2}\,\hat{\mu}_\eps^2
\end{equation*}
approximate the solitary waves from Assumption \ref{Ass:Waves} in the sense of
\begin{equation*}
\bnorm{R_\delta-\hat{R}_{\eps_\delta}}_q +\bnorm{V_\delta-\hat{V}_{\eps_\delta}}_q  +\eps_\delta^m \babs{\si_{\delta}-\hat{\si}_{\eps_\delta}}=\DO{\eps_\delta^m}=\DO{\delta^m}
\end{equation*}
for any $q\in\ccinterval{1}{\infty}$. Here,
$\tilde{S}_0$ solves the ODE initial value problem \eref{Eqn:LimitIVP},
the constants $\ol{\mu}$, $\ol{\ka}$ are given in \eref{Lem:TS.LimitProfile.Consts}, and the functions $\tilde{W}_0$, $\tilde{T}_0$ are defined in \eref{Eqn:TrS.Lim}, \eref{Eqn:FoS.Lim}.
\end{theorem}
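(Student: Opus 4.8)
The plan is to assemble the global approximation by patching together the three local scaling theorems with the tail estimates, after recognizing $\hat{\mu}_{\eps_\delta}$ as the explicit leading-order value of the true width $\mu_\delta$. The crucial preliminary observation is Lemma~\ref{Lem:ScalingMu}, which gives $\mu_\delta=\hat{\mu}_{\eps_\delta}+\DO{\eps_\delta^{m+1}}$ and hence a relative error $\at{\mu_\delta-\hat{\mu}_{\eps_\delta}}/\hat{\mu}_{\eps_\delta}=\DO{\eps_\delta^{m}}$. This single estimate is what allows me to replace the \emph{implicit} width $\mu_\delta$, through which the true profiles are rescaled, by the \emph{explicit} $\hat{\mu}_{\eps_\delta}$ used in the definitions of $\hat{R}_\eps$ and $\hat{V}_\eps$. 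I would then split $\Rset$ into the tip region $\abs{x}<\frac12$, the two foot regions $\frac12\leq\abs{x}<\frac32$, the velocity transition regions $\abs{x}<1$, and the tails, and bound $R_\delta-\hat{R}_{\eps_\delta}$ and $V_\delta-\hat{V}_{\eps_\delta}$ separately on each piece.

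On the tip region I insert $\tilde{x}=\abs{x}/\mu_\delta$ into \eref{Eqn:TipS.Def1}, writing $R_\delta\at{x}=1-\eps_\delta-\eps_\delta\tilde{S}_\delta\at{\abs{x}/\mu_\delta}$, and compare with $\hat{R}_{\eps_\delta}\at{x}=1-\eps_\delta-\eps_\delta\tilde{S}_0\at{\abs{x}/\hat{\mu}_{\eps_\delta}}$. The difference splits into $\eps_\delta\babs{\tilde{S}_\delta-\tilde{S}_0}=\DO{\eps_\delta^m}$ via Theorem~\ref{Thm:TS.Asymptotics}, and an argument-shift term $\eps_\delta\babs{\tilde{S}_0\at{\abs{x}/\mu_\delta}-\tilde{S}_0\at{\abs{x}/\hat{\mu}_{\eps_\delta}}}$, which I would bound using $\abs{x}/\mu_\delta-\abs{x}/\hat{\mu}_{\eps_\delta}=\tilde{x}\,\DO{\eps_\delta^m}$ together with the boundedness of $\tilde{S}_0'$ from Lemma~\ref{Lem:TS.LimitProfile}, again of order $\DO{\eps_\delta^m}$. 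The foot and transition regions are treated by the same device, using Theorems~\ref{Thm:FoS.MR} and~\ref{Thm:TrS.MR}. The only delicate point is the transition region, where the prefactor $\eps_\delta/\mu_\delta$ is of order one but the argument-shift term is amplified by $1/\mu_\delta\sim1/\eps_\delta$: here I would exploit that the shift carries a factor $\tilde{x}$ and that $\tilde{W}_0'=\frac12\tilde{S}_0''$ decays like $\at{1+\tilde{x}}^{-\at{m+1}}$, so that $\tilde{x}\,\tilde{W}_0'\at{\tilde{x}}$ stays bounded and compensates the $1/\eps_\delta$ growth, while the boundedness of $\tilde{W}_0$ itself handles the prefactor discrepancy. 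In each region one thereby obtains a uniform bound $\DO{\eps_\delta^m}$ on a set of bounded measure, and Lemma~\ref{Lem:PointwiseAsymp} guarantees that the tip and foot pieces are mutually consistent at the junctions $x=\pm\frac12$.

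To pass from these pointwise bounds to the $\fspaceL^q$ estimates I note that each of the above regions has bounded Lebesgue measure, so the sup bound $\DO{\eps_\delta^m}$ yields the same order in $\fspaceL^q$ for every $q\in\ccinterval{1}{\infty}$. On the tails $\hat{R}_{\eps_\delta}$ and $\hat{V}_{\eps_\delta}$ vanish, so there I only need $\norm{R_\delta}_{\fspaceL^q\at{\abs{x}\geq3/2}}$ and $\norm{V_\delta}_{\fspaceL^q\at{\abs{x}\geq1}}$; Theorem~\ref{Thm:TailEstimates} bounds both the $\fspaceL^1$ and the $\fspaceL^\infty$ norms by $C\eps_\delta^m$, and the interpolation $\norm{\cdot}_q^q\leq\norm{\cdot}_\infty^{q-1}\norm{\cdot}_1$ then gives $\DO{\eps_\delta^m}$ for all $q$. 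For the wave speed I would use $\si_\delta=\mu_\delta^2\eps_\delta^{-m-2}$ and $\hat{\si}_{\eps_\delta}=\hat{\mu}_{\eps_\delta}^2\eps_\delta^{-m-2}$, so that $\si_\delta-\hat{\si}_{\eps_\delta}=\eps_\delta^{-m-2}\at{\mu_\delta-\hat{\mu}_{\eps_\delta}}\at{\mu_\delta+\hat{\mu}_{\eps_\delta}}=\DO{1}$ by Lemma~\ref{Lem:ScalingMu}, whence $\eps_\delta^m\babs{\si_\delta-\hat{\si}_{\eps_\delta}}=\DO{\eps_\delta^m}$. Finally the identity $\DO{\eps_\delta^m}=\DO{\delta^m}$ follows from the linear scaling $\delta\asymp\eps_\delta$ established in Corollary~\ref{Cor:ScalingLaws}.

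The main obstacle I anticipate is the second step: reconciling the implicit width $\mu_\delta$ with the explicit $\hat{\mu}_{\eps_\delta}$ inside profiles that are evaluated at arguments as large as $1/\at{2\mu_\delta}\sim1/\eps_\delta$. A naive estimate of the argument-shift terms loses a full power of $\eps_\delta$, and recovering the sharp order $\DO{\eps_\delta^m}$ requires the quantitative decay of $\tilde{S}_0'$, $\tilde{W}_0'$, and $\tilde{T}_0''$ from Lemma~\ref{Lem:TS.LimitProfile} to cancel the growth coming both from the prefactors and from the linear-in-$\tilde{x}$ size of the shift. Everything else — the region-by-region bookkeeping, the junction consistency at $x=\pm\frac12$, and the $\fspaceL^q$ interpolation on the tails — is routine once this balancing has been carried out.
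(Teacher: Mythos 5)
Your proposal is correct and follows essentially the same route as the paper's own proof: the same reduction via Lemma \ref{Lem:ScalingMu} to replace $\mu_\delta$ by $\hat{\mu}_{\eps_\delta}$, the same region-by-region use of Theorems \ref{Thm:TS.Asymptotics}, \ref{Thm:TrS.MR}, \ref{Thm:FoS.MR} and \ref{Thm:TailEstimates}, and crucially the same compensation of the amplified argument shift in the transition region by the boundedness of $\tilde{x}\mapsto\tilde{x}\,\tilde{W}_0^\prime\at{\tilde{x}}$, which is exactly the paper's ``crucial estimate.'' Your handling of the $\fspaceL^q$ norms on the tails via $\fspaceL^1$--$\fspaceL^\infty$ interpolation and of $\si_\delta-\hat{\si}_{\eps_\delta}$ via \eref{Eqn:DefEpsMu} also matches the paper, so there is nothing substantive to add.
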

\begin{proof}
Notice that $\DO{\delta}=\DO{\eps_\delta}$ is ensured by Lemma \ref{Lem:ScalingDe} and that
it suffices to consider the case $q=\infty$ because the functions $\hat{R}_\eps$ and  $\hat{V}_\eps$ are compactly supported. Theorems \ref{Thm:TS.Asymptotics},
\ref{Thm:TrS.MR}, and \ref{Thm:FoS.MR} -- which concern the convergence under the different rescalings --- as well as the tail estimates from Theorem \ref{Thm:TailEstimates}
provide a variant of the desired estimates in which
$\hat{\mu}_{\eps_\delta}$  is replaced by
$\mu_\delta$. Thanks to Lemma \ref{Lem:ScalingMu} we also have
$\mu_\delta\sim\delta$
as well as
\begin{equation}
\label{Thm:GlobApp.PEqn1}
\mu_\delta = \hat{\mu}_{\eps_\delta}+\DO{\delta^{m+1}}\qquad\mbox{and hence}\qquad
\frac{1}{\mu_\delta}=\frac{1}{\hat{\mu}_{\eps_\delta}}+\DO{\delta^{m-1}}.
\end{equation}
Since $\tilde{S}_0^\prime$ is bounded, the intermediate value theorem implies
\begin{equation*}
\eps_\delta\abs{\tilde{S}_0\at{\frac{\abs{x}}{\mu_\delta}}-
\tilde{S}_0\at{\frac{\abs{x}}{\hat{\mu}_{\eps_\delta}}}}=
\eps_\delta\abs{\norm{\tilde{S}_0^\prime}_{\infty}\,\abs{x}\,\DO{\delta^{m-1}}}=\DO{\delta^m}\,,
\end{equation*}
 and by similar arguments we derive the corresponding estimate for $\tilde{T}_0$.
For the approximation of the velocity profile, the crucial estimate is
\begin{eqnarray*}
\abs{\tilde{W}_0\at{\frac{\mbox{$\frac12$}-\abs{x}}{\mu_\delta}}-
\tilde{W}_0\at{\frac{\mbox{$\frac12$}-\abs{x}}{\hat{\mu}_{\eps_\delta}}}}=
\abs{
\tilde{W}_0^\prime\at{\xi}\at{\mbox{$\frac12$}-\abs{x}}\DO{\delta^{m-1}}}=\DO{\delta^m},
\end{eqnarray*}
where $\xi$ denotes an intermediate value and where we used that the function $\tilde{x}\mapsto\tilde{x} \tilde{W}_0^\prime\at{\tilde{x}}$ is bounded. Finally, the estimates for $\si_\delta-\hat{\si}_{\eps_\delta}$ follow from \eref{Eqn:DefEpsMu} and \eref{Thm:GlobApp.PEqn1}.
\end{proof}
For practical purposes it might be more convenient to regard $\eps$ as the
independent parameter and $\delta$ as the derived quantity.
In this case we can employ the following result, which is, however,
weaker than Theorem \ref{Thm:GlobApp} since the guaranteed error bounds are of lower order.
\begin{corollary}[variant of the global approximation result]
\label{Cor:GlobApp}
We have
\begin{equation*}
\bnorm{\hat{R}_{\eps}-R_{\hat{\delta}_\eps}}_q +
\bnorm{\hat{V}_{\eps}-V_{\hat{\delta}_\eps}}_q  +\eps^{m-1} \babs{\hat{\si}_{\eps}-\si_{\hat{\delta}_\eps}}=\DO{\eps^{m-1}}
\end{equation*}
for any $q\in\ccinterval{1}{\infty}$, where $\hat{\delta}_\eps:=1-\norm{\hat{V}_\eps}_2\sim \eps$.
\end{corollary}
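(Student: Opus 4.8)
The goal is to establish Corollary~\ref{Cor:GlobApp}, a variant of the global approximation result where $\eps$ is regarded as the independent parameter and $\hat\delta_\eps := 1-\norm{\hat V_\eps}_2$ is the derived quantity. My plan is to reparametrize the approximation from Theorem~\ref{Thm:GlobApp} rather than reprove it. The key observation is that Theorem~\ref{Thm:GlobApp} compares the exact waves $\triple{R_\delta}{V_\delta}{\si_\delta}$ to the approximations $\hat R_{\eps_\delta}$, $\hat V_{\eps_\delta}$, $\hat\si_{\eps_\delta}$ evaluated at the \emph{actual} amplitude $\eps_\delta=1-R_\delta\at0$. In the corollary, by contrast, we fix an arbitrary $\eps$, form the approximations $\hat R_\eps$, $\hat V_\eps$, $\hat\si_\eps$, and then select the exact wave indexed by $\hat\delta_\eps=1-\norm{\hat V_\eps}_2$. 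So the first step is to verify that the map $\delta\mapsto\eps_\delta$ can effectively be inverted, and that choosing $\hat\delta_\eps$ this way makes the amplitude $\eps_{\hat\delta_\eps}$ of the selected exact wave agree with the prescribed $\eps$ up to an error small enough to apply Theorem~\ref{Thm:GlobApp}.

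Concretely, I would first compute $\norm{\hat V_\eps}_2$ directly from the explicit formula for $\hat V_\eps$. Using the substitution $\tilde x=(\frac12-\abs x)/\hat\mu_\eps$ and the definition $\tilde W_0=\frac12(\tilde S_0'+\ol\mu)$, one obtains an expression of the form $\norm{\hat V_\eps}_2^2=\frac{\eps^2}{\hat\mu_\eps}\int_{J}\tilde W_0^2+\DO{\cdot}$ that mirrors the computation in Lemma~\ref{Lem:ScalingDe}; comparing with \eref{Lem:ScalingDe.PEqn2} shows that $\hat\delta_\eps=1-\norm{\hat V_\eps}_2$ obeys the \emph{same} leading-order scaling law in $\eps$ as the exact relation $\delta\leftrightarrow\eps_\delta$ from Lemma~\ref{Lem:ScalingDe}, namely $\hat\delta_\eps\sim\eps$. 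Since that scaling relation is, to leading order, a strictly monotone affine map between $\delta$ and $\eps_\delta$ with a nonzero slope (the coefficient $(2\ol\mu-\ol\mu\ol\ka-\ol\eta)/(2\ol\mu)$ in Corollary~\ref{Cor:ScalingLaws}), inverting it and substituting $\delta=\hat\delta_\eps$ yields $\eps_{\hat\delta_\eps}=\eps+\DO{\eps^2}$ — that is, the amplitude of the exact wave $V_{\hat\delta_\eps}$ differs from $\eps$ only at second order.

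The second step is to run Theorem~\ref{Thm:GlobApp} at $\delta=\hat\delta_\eps$ and then replace every occurrence of $\eps_{\hat\delta_\eps}$ by $\eps$ inside the approximation formulas. By Theorem~\ref{Thm:GlobApp} we have $\norm{R_{\hat\delta_\eps}-\hat R_{\eps_{\hat\delta_\eps}}}_q+\norm{V_{\hat\delta_\eps}-\hat V_{\eps_{\hat\delta_\eps}}}_q=\DO{\eps^m}$, so it remains to bound the ``parameter-shift'' errors $\norm{\hat R_{\eps_{\hat\delta_\eps}}-\hat R_\eps}_q$ and $\norm{\hat V_{\eps_{\hat\delta_\eps}}-\hat V_\eps}_q$. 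These are differences of the explicit profiles evaluated at two nearby amplitudes differing by $\DO{\eps^2}$. Differentiating the formulas for $\hat R_\eps$ and $\hat V_\eps$ with respect to $\eps$ — and using that $\hat\mu_\eps\sim\eps$, that $\tilde S_0'$ and $\tilde x\mapsto\tilde x\tilde W_0'(\tilde x)$ are bounded, and that the supports are compact — one estimates these shifts exactly as in the proof of Theorem~\ref{Thm:GlobApp}, where the analogous $\mu_\delta\to\hat\mu_{\eps_\delta}$ replacement was carried out. The crucial point is that the prefactor $\eps/\hat\mu_\eps$ in $\hat V_\eps$ is $\DO{1}$, so a relative perturbation of order $\DO\eps$ in $\eps$ produces absolute profile errors of order $\DO{\eps^{m-1}}$ rather than $\DO{\eps^m}$ — this is precisely why the corollary's guaranteed bound is weaker, $\DO{\eps^{m-1}}$ instead of $\DO{\eps^m}$. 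The speed estimate $\eps^{m-1}\abs{\hat\si_\eps-\si_{\hat\delta_\eps}}=\DO{\eps^{m-1}}$ follows identically from the definition $\hat\si_\eps=\eps^{-m-2}\hat\mu_\eps^2$ together with the $\DO{\eps^2}$ control on $\eps_{\hat\delta_\eps}-\eps$.

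The main obstacle is the first step: quantifying the inversion $\eps\mapsto\hat\delta_\eps\mapsto\eps_{\hat\delta_\eps}$ and showing the round trip returns $\eps$ up to $\DO{\eps^2}$. This hinges on the scaling law of Lemma~\ref{Lem:ScalingDe} being invertible with the expected regularity, and here the error-order mismatch flagged in the discussion after Lemma~\ref{Lem:ScalingDe} becomes relevant — because the $\delta\leftrightarrow\eps$ relation is only controlled to $\DO{\eps^m}$ (whereas $\mu_\delta$ is pinned down to $\DO{\eps^{m+1}}$), the achievable consistency between $\eps$ and $\eps_{\hat\delta_\eps}$, and hence the final error bound, degrades to $\DO{\eps^{m-1}}$. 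I expect the profile and speed estimates of the second and third steps to be essentially routine consequences of Theorem~\ref{Thm:GlobApp} and the explicit formulas, with all genuine analytic work already discharged there.
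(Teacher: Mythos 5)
Your overall architecture---compute $\norm{\hat V_\eps}_2$ explicitly by mirroring Lemma~\ref{Lem:ScalingDe}, establish consistency between the prescribed $\eps$ and the actual amplitude $\eps_{\hat\delta_\eps}$ of the selected exact wave, then combine Theorem~\ref{Thm:GlobApp} with parameter-shift estimates on the explicit profiles---is exactly the paper's route, but the execution of the consistency step has a genuine gap. You derive $\eps_{\hat\delta_\eps}=\eps+\DO{\eps^2}$ by inverting the \emph{truncated} leading-order affine law $\delta\approx c_1\eps$ from Corollary~\ref{Cor:ScalingLaws}. Any inversion of a truncated law caps the accuracy at the size of the first neglected term, so this argument can never yield better than $\DO{\eps^{\min\{2,m\}}}$. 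That is fatal for $m>2$: feeding $\abs{\eps_{\hat\delta_\eps}-\eps}=\DO{\eps^2}$ into the shift estimate (profile error $=\DO{\abs{\Delta\eps}/\eps}$, because the profiles depend on $\eps$ through the rescaled argument $x/\hat\mu_\eps$ with $\hat\mu_\eps\sim\eps$) gives only $\DO{\eps}$, which is strictly weaker than the claimed $\DO{\eps^{m-1}}$ once $m>2$. The paper avoids truncating: it equates the two \emph{un-truncated} expressions for $\at{1-\hat\delta_\eps}^2$, namely Lemma~\ref{Lem:ScalingDe} evaluated at $\delta=\hat\delta_\eps$, which involves $f\at{\eps_{\hat\delta_\eps}}$ with $f\at{s}=\bat{1+s\at{\ol\ka-1}}\bat{1-s\at{1+\ol\eta/\ol\mu}}$, and the direct computation of $\norm{\hat V_{\eps}}_2^2$ obtained by repeating the steps between \eref{Lem:ScalingDe.PEqn1} and \eref{Lem:ScalingDe.PEqn2}, which involves $f\at{\eps}$. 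Both identities hold up to $\DO{\eps^m}$, and since $f^\prime\at{0}\neq0$ this yields $\eps_{\hat\delta_\eps}=\eps+\DO{\eps^m}$, and then $\mu_{\hat\delta_\eps}=\hat\mu_\eps+\DO{\eps^m}$ via Lemma~\ref{Lem:ScalingMu}, for \emph{every} $m>1$; dividing by $\eps$ in the shift estimate then gives exactly the asserted $\DO{\eps^{m-1}}$.

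Your write-up is also internally inconsistent on precisely this point: your closing paragraph correctly attributes the degradation from $\DO{\eps^m}$ to $\DO{\eps^{m-1}}$ to the fact that the $\delta\leftrightarrow\eps$ relation is controlled only to $\DO{\eps^m}$ (the paper's actual mechanism), which contradicts the $\DO{\eps^2}$ claim in your first step. Relatedly, the sentence blaming the loss of one order on the prefactor $\eps/\hat\mu_\eps=\DO{1}$ in $\hat V_\eps$ misidentifies the mechanism: the loss comes from differentiating the rescaled argument $x/\hat\mu_\eps$, so that a parameter shift $\Delta\eps$ produces a profile error of order $\abs{\Delta\eps}/\eps$, not from that bounded prefactor. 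With the consistency step repaired as above, the remainder of your plan---Theorem~\ref{Thm:GlobApp} at $\delta=\hat\delta_\eps$, shift estimates via the boundedness of $\tilde{S}_0^\prime$ and of $\tilde{x}\mapsto\tilde{x}\,\tilde{W}_0^\prime\at{\tilde{x}}$, and compact supports to pass to the $q$-norms---goes through as in the paper.
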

\begin{proof} Let $\eps_*$ be fixed,
where the subscript $*$ has been introduced for the sake of clarity only, and write $\delta_*:=\hat{\delta}_{\eps_*}$
as well as $\mu_*:=\hat{\mu}_{\eps_*}$ for
the quantities that can be computed directly and explicitly from $\eps_*$.
Lemma \ref{Lem:ScalingDe} provides
\begin{equation*}
\at{1-\delta_*}^2=\norm{V_{\delta_*}}_2^2=\Bat{1+\eps_{\delta_*}\at{\ol{\ka}-1}}
\at{1-\eps_{\delta_*}\at{1+\frac{\ol{\eta}}{\ol\mu}}}+\DO{\eps_{\delta_*}^m}\,,
\end{equation*}
where $\eps_{\delta_*}$ and $\mu_{\delta_*}$ are defined by
the exact wave data $\triple{R_{\delta_*}}{V_{\delta_*}}{\si_{\delta_*}}$ and must not be confused with $\eps_*$ and $\mu_*$. On the other hand, a direct calculation  -- we just repeat all arguments between \eref{Lem:ScalingDe.PEqn1} and \eref{Lem:ScalingDe.PEqn2} with $\pair{\eps_*}{\mu_*}$ instead of $\pair{\eps_{\delta_*}}{\mu_{\delta_*}}$ --  reveals
\begin{eqnarray*}
\at{1-\delta_*}^2&=&\norm{ \hat{V}_{\eps_*}}_2^2=
2\,\frac{\eps_*^2}{\mu_{*}}\int\limits_{-1/\at{2\mu_*}}^{+1/\at{2\mu_*}}\tilde{W}_0\at{\tilde{x}}^2\dint{\tilde{x}}\\&=&
\Bat{1+\eps_{*}\at{\ol{\ka}-1}}
\at{1-\eps_{*}\at{1+\frac{\ol{\eta}}{\ol\mu}}}+\DO{\eps_{*}^m}\,.
\end{eqnarray*}
Equating the right hand sides in both identities we then conclude
\begin{equation*}
\eps_{\delta_*}=\eps_*+\DO{\eps_*^m}\,,\qquad \mu_{\delta_*}=\mu_*+\DO{\eps_*^m}\,,
\end{equation*}
where the last identity holds due to the $\eps_*$-dependence of $\mu_*$ and Lemma \ref{Lem:ScalingMu}, which provides an approximation of $\mu_{\delta_*}$ in terms of $\eps_{\delta_*}$. Finally, exploiting the properties of $\tilde{S}_0$, $\tilde{W}_0$, and $\tilde{T}_0$
as in the proof of Theorem \ref{Thm:GlobApp} we arrive at
\begin{equation*}
\norm{\hat{R}_{\eps_*} -\hat{R}_{\eps_{\delta_*}}}_\infty+
\norm{\hat{V}_{\eps_*} -\hat{V}_{\eps_{\delta_*}}}_\infty
=\DO{\frac{\abs{\eps_*-\eps_{\delta_*}}}{\eps_*}}=\DO{\eps_*^{m-1}}\,,
\end{equation*}
and obtain analogous estimates for the $q$-norms due to the compactness of the supports. The assertion is now provided by Theorem \ref{Thm:GlobApp}.
\end{proof}
%
%
\subsection{On the asymptotic eigenvalue problem}\label{sect:eigenproblem}
%
%
Of particular interest in the analysis of solitary waves is the spectrum of the linearized equation. The problem consists of finding eigenpairs $\pair{\la}{U}\in\Rset\times\fspaceL^2\at\Rset$ such that
\begin{equation}
\label{Eqn:Eigenproblem}
\la U=L_\delta U\,,\qquad L_\delta U:=A Q_\delta A U\,,\qquad Q_\delta\at{x}:=
\frac{\Phi^{\prime\prime}\bat{{R}_\delta\at{x}}}{\si_\delta}\,,
\end{equation}
where the function $Q_\delta$ becomes singular in the limit $\delta\to0$, see Figure \ref{Fig.Eigenproblem}. Due to the shift symmetry of the traveling wave equation \eref{Eqn:TW.Int}, there is always the solution
\begin{equation*}
{\la}=1\,,\qquad U=V_\delta^\prime\,,
\end{equation*}
and a natural question is whether this eigenspace is simple or not. In fact, simplicity
would immediately imply some local uniqueness for solitary waves
and is also an important ingredient for both linearized and orbital stability.
\par
Unfortunately,
very little is known about the solution set of \eref{Eqn:Eigenproblem} due to the nonlocality of the operator $A$. In the small-energy limit of FPU-type chains, the corresponding problem has been solved in \cite{FP99} by showing that the spectral properties of the analogue to $L_\delta$ are governed by an asymptotic ODE problem which stems from the KdV equation and admits explicit solutions. The hope is that the asymptotic formulas derived in this paper provide spectral control in the high-energy limit. A detailed study of the singular perturbation problem \eref{Eqn:Eigenproblem} is beyond the
scope of this paper but preliminary investigations indicate that the spectrum of $L_\delta$ depends in the limit $\delta\to0$ -- and at least for sufficiently large $m$ -- crucially on the coefficients $c_{\pm1}$ that are derived in the following result.
\begin{figure}[t!]
\centering{%
\includegraphics[width=0.45\textwidth]{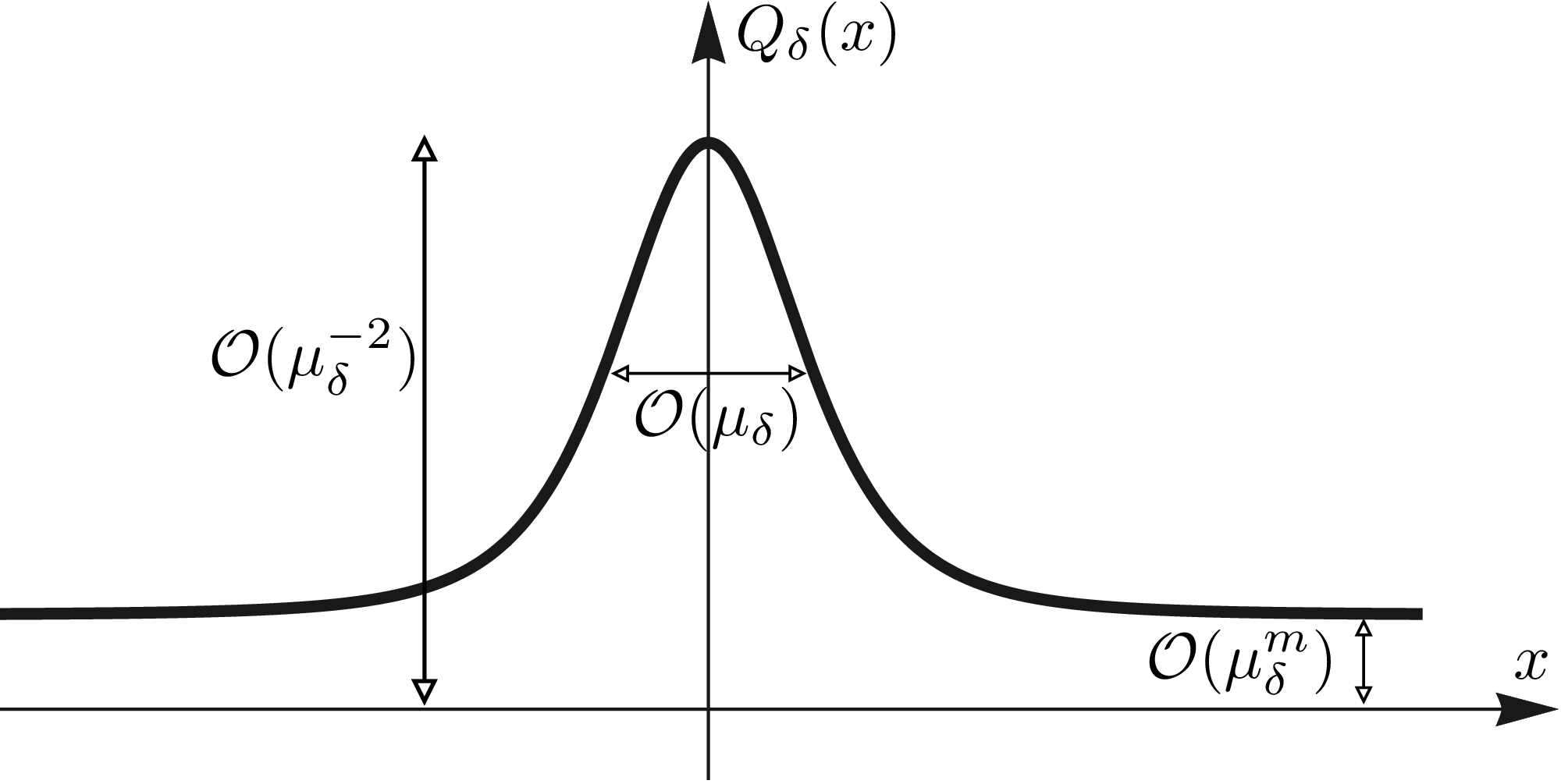}%
}%
\caption{Cartoon of the coefficient function $Q_\delta$ in the linear eigenproblem \eref{Eqn:Eigenproblem}. }%
\label{Fig.Eigenproblem}%
\end{figure}
\begin{theorem}[weak$\star$-expansion of $Q_\delta$]
For any sufficiently regular test function $\varphi$ we have
\begin{equation*}
\int_\Rset Q_\delta\at{x}\varphi\at{x} \dint{x} =
c_{-1}\mu_\delta^{-1}\varphi\at{0}+c_{+1}\mu_\delta^{+1}\varphi^{\prime\prime}\at{0}+\DO{\mu_\delta^{\min{\{m-2,3\}}}}
\end{equation*}
with
\begin{equation*}
c_{-1}:=\int_{\Rset}
\frac{1}
{\at{1+\tilde{S}_0\at{\tilde{x}}}^{m+2}}\dint{\tilde{x}} \,,\qquad
c_{+1}:=\mbox{$\frac12$}\int_{\Rset}
\frac{\tilde{x}^2}
{\at{1+\tilde{S}_0\at{\tilde{x}}}^{m+2}} \dint{\tilde{x}}\,,
\end{equation*}
where the error terms depend on $\varphi$.
\end{theorem}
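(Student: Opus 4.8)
The plan is to exploit the explicit form $\Phi^{\prime\prime}\at{r}=\at{1-r}^{-m-2}$, which follows from \eref{Eqn:Pot}, together with the tip scaling of \sref{sect:tipscaling}. Since \eref{Eqn:TipS.Def1} gives $1-R_\delta\at{\mu_\delta\tilde{x}}=\eps_\delta\at{1+\tilde{S}_\delta\at{\tilde{x}}}$ and $\mu_\delta^2=\si_\delta\eps_\delta^{m+2}$ by \eref{Eqn:DefEpsMu}, a one-line computation yields the exact representation

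\begin{equation*}
Q_\delta\at{\mu_\delta\tilde{x}}=\frac{1}{\mu_\delta^2\at{1+\tilde{S}_\delta\at{\tilde{x}}}^{m+2}}\,,
\end{equation*}

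which shows that the singular prefactor $1/\mu_\delta^2$ is multiplied by a profile that, after rescaling, is controlled by $\tilde{S}_\delta$ and hence — via Theorem \ref{Thm:TS.Asymptotics} — by $\tilde{S}_0$.

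First I would split $\int_\Rset Q_\delta\varphi\dint{x}$ into the tip region $\abs{x}\leq\tfrac12$ and its complement. On the complement the unimodality of $R_\delta$ gives $R_\delta\at{x}\leq R_\delta\at{\tfrac12}\to\tfrac12$, so $\Phi^{\prime\prime}\at{R_\delta}$ stays bounded and $Q_\delta=\DO{\si_\delta^{-1}}=\DO{\eps_\delta^m}$ uniformly there by Corollary \ref{Cor:TS.ConvPrms}; tested against the fixed integrable $\varphi$ this part is $\DO{\mu_\delta^m}$ and may be absorbed into the error. On the tip region the substitution $x=\mu_\delta\tilde{x}$ turns the integral into $\mu_\delta^{-1}\int_{J_\delta}\varphi\at{\mu_\delta\tilde{x}}\at{1+\tilde{S}_\delta\at{\tilde{x}}}^{-m-2}\dint{\tilde{x}}$. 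I would then Taylor-expand $\varphi\at{\mu_\delta\tilde{x}}$ about the origin. Because $\tilde{S}_\delta$ is even by \eref{Eqn:TipS.Id0}, every odd power of $\tilde{x}$ integrates to zero over the symmetric interval $J_\delta$, so the $\varphi^\prime\at{0}$ and $\varphi^{\prime\prime\prime}\at{0}$ contributions drop out. Replacing $\tilde{S}_\delta$ by $\tilde{S}_0$ and extending $J_\delta$ to $\Rset$, the surviving $\varphi\at{0}$ term produces $c_{-1}\mu_\delta^{-1}\varphi\at{0}$ and the $\tfrac12\mu_\delta^2\tilde{x}^2\varphi^{\prime\prime}\at{0}$ term produces $c_{+1}\mu_\delta\,\varphi^{\prime\prime}\at{0}$; the defining moment for $c_{+1}$ converges precisely because $\tilde{S}_0\at{\tilde{x}}\sim\ol{\mu}\abs{\tilde{x}}$ by Lemma \ref{Lem:TS.LimitProfile} forces the integrand to decay like $\tilde{x}^{-m}$ and $m>1$.

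The remaining work is the error bookkeeping, and I expect the two exponents in $\DO{\mu_\delta^{\min\{m-2,3\}}}$ to arise here from two independent sources. Replacing $\tilde{S}_\delta$ by $\tilde{S}_0$ in the leading term costs, via the uniform bound \eref{Thm:TS.Asymptotics.Eqn1} and the convexity estimate $1+\tilde{S}_\delta\geq c\at{1+\tilde{x}}$ from \eref{Thm:TS.Asymptotics.PEqn2},

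\begin{equation*}
\frac{\abs{\varphi\at{0}}}{\mu_\delta}\int_{J_\delta}\frac{C\eps_\delta^{m-1}}{\at{1+\tilde{x}}^{m+3}}\dint{\tilde{x}}=\mu_\delta^{-1}\DO{\eps_\delta^{m-1}}=\DO{\mu_\delta^{m-2}}\,,
\end{equation*}

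whereas the analogous replacement in the $c_{+1}$ term and both tail truncations $J_\delta\to\Rset$ are of the smaller order $\DO{\mu_\delta^m}$. The second exponent comes from the first nonvanishing term dropped by the expansion: the quartic remainder is controlled by $\mu_\delta^3\int_{J_\delta}\tilde{x}^4\at{1+\tilde{x}}^{-m-2}\dint{\tilde{x}}$, which is $\DO{\mu_\delta^3}$ for $m>3$, $\DO{\mu_\delta^3\abs{\ln\mu_\delta}}$ at $m=3$, and $\DO{\mu_\delta^m}$ for $1<m<3$. Collecting the dominant contributions gives the stated bound.

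The main obstacle is not conceptual but the $m$-dependent convergence of the moment integrals $\int\tilde{x}^{2k}\at{1+\tilde{S}_0}^{-m-2}\dint{\tilde{x}}$: for small $m$ the fourth and higher moments diverge, so the naive extension of $J_\delta$ to $\Rset$ is inadmissible and one must instead estimate the truncated integral directly, tracking how its growth in $1/\mu_\delta$ trades against the explicit $\mu_\delta$-powers. Keeping this exchange bookkeeping consistent across the regimes $1<m<3$, $m=3$, and $m>3$ is the only delicate point; everything else is the routine multipole extraction sketched above.
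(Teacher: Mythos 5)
Your proposal is correct and follows essentially the same route as the paper's proof: split off the region $\abs{x}>\frac12$ as $\DO{\mu_\delta^m}$, pass to the tip scaling so that $Q_\delta\at{\mu_\delta\tilde{x}}=\mu_\delta^{-2}\at{1+\tilde{S}_\delta\at{\tilde{x}}}^{-m-2}$, replace $\tilde{S}_\delta$ by $\tilde{S}_0$ via Theorem \ref{Thm:TS.Asymptotics} at cost $\DO{\mu_\delta^{m-2}}$ after dividing by $\mu_\delta$, and Taylor-expand $\varphi$ using the evenness of the profiles. Your only genuine refinement is the treatment of the quartic remainder for $m\leq 3$ by keeping the integral truncated on $J_\delta$ and trading growth in $1/\mu_\delta$ against explicit powers of $\mu_\delta$: the paper simply asserts a $\DO{\mu_\delta^4}$ Taylor error after extending the integral to $\Rset$, which strictly speaking requires $m>3$, although the final bound $\DO{\mu_\delta^{\min\{m-2,3\}}}$ absorbs this discrepancy in every case.
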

\begin{proof}
Due to $\Phi^{\prime\prime}\at{r}=\at{1-r}^{-m-2}$ and the scaling relations \eref{Cor:TS.ConvPrms.Eqn1} we find
\begin{equation*}
\int_{\Rset\setminus\ccinterval{-1/2}{+1/2}} Q_\delta\at{x}\varphi\at{x} \dint{x}=\DO{\si_\delta^{-1}}=\DO{\mu_\delta^m}\,,
\end{equation*}
and \eref{Eqn:TipS.Def1} along with \eref{Eqn:DefEpsMu} implies
\begin{eqnarray*}
\int_{\ccinterval{-1/2}{+1/2}} Q_\delta\at{x}\varphi\at{x} \dint{x}&=&\frac{\mu_\delta}{\si_\delta}\int_{J_\delta} \Phi^{\prime\prime}\bat{R_\delta\at{\mu_\delta\tilde{x}}}\varphi\at{\mu_\delta\tilde{x}} \dint{\tilde{x}}\\&=&
\frac{1}{\mu_\delta}\int_{J_\delta}
\frac{\varphi\at{\mu_\delta\tilde{x}}}
{\at{1+\tilde{S}_\delta\at{\tilde{x}}}^{m+2}}
\dint{\tilde{x}}\,.
\end{eqnarray*}
Theorem \ref{Thm:TS.Asymptotics} as well as the linear growth of $\tilde{S}_0$ -- see Lemma \ref{Lem:TS.LimitProfile} -- ensure
\begin{eqnarray}
\label{Lem:EP:Formulas.PEqn1}
\eqalign{ %
\int_{J_\delta}
\frac{\varphi\at{\mu_\delta\tilde{x}}}
{\at{1+\tilde{S}_\delta\at{\tilde{x}}}^{m+2}}
\dint{\tilde{x}}
&= %
\int_{J_\delta}
\frac{\varphi\at{\mu_\delta\tilde{x}}}
{\at{1+\tilde{S}_0\at{\tilde{x}}}^{m+2}}
\dint{\tilde{x}}+\DO{\mu_\delta^{m-1}}
\cr %
&= %
\int_{\Rset}
\frac{\varphi\at{\mu_\delta\tilde{x}}}
{\at{1+\tilde{S}_0\at{\tilde{x}}}^{m+2}}
\dint{\tilde{x}}+\DO{\mu_\delta^{m-1}}
} %
\end{eqnarray}
and by smoothness of $\varphi$ and evenness of $\tilde{S}_0$ we can approximate
\begin{equation*}
\int_{\Rset}
\frac{\varphi\at{\mu_\delta\tilde{x}}}
{\at{1+\tilde{S}_0\at{\tilde{x}}}^{m+2}}
\dint{\tilde{x}}=c_{-1}\varphi\at{0}+c_{+1}\mu_\delta^2\varphi^{\prime\prime}\at{0}+\DO{\mu_\delta^4}\,.
\end{equation*}
The claim now follows by combining all partial estimates from above.
\end{proof}
For completeness we mention that the estimate \eref{Lem:EP:Formulas.PEqn1} is not optimal for moderate values of $m$ and might be improved for the prize of more technical effort.
%
%
%
%
%
\section*{Acknowledements}
The authors are grateful for the support by the \emph{Deutsche Forschungsgemeinschaft} (DFG individual grant HE 6853/2-1) and the \emph{London Mathematical Society} (LMS Scheme 4 Grant, Ref~41326).
%
%
\section*{References}
\end{document}